\documentclass[12pt]{amsart}
\usepackage[a4paper,hmargin={2.5cm,2.5cm},vmargin={2.5cm,2.5cm}]{geometry}

\usepackage[utf8]{inputenc}
\usepackage{lipsum}
\usepackage[T1]{fontenc}
\usepackage{lmodern}
\usepackage{mathrsfs}
\usepackage[T1]{fontenc}
\usepackage[english]{babel}
\usepackage{csquotes}
\usepackage{hyperref}
\usepackage{cite}
\usepackage[shortlabels]{enumitem}
\usepackage{libertinus}
\usepackage{libertinust1math}
\usepackage{mathtools}
\mathtoolsset{showonlyrefs}

\title[Multiplicity and Regularity results for Quasilinear Elliptic Systems]{Multiplicity and Regularity Results for Quasilinear Elliptic Systems via Nonsmooth Critical Point Theory}
\date{\today}

 \author{Simone Mauro}
 \address{ Dipartimento di Matematica e Informatica, Università della Calabria, Ponte Pietro Bucci cubo 31B,   87036 ARCAVACATA DI RENDE, COSENZA, ITALY}
 \email{simone.mauro@unical.it}   

\numberwithin{equation}{section}

\usepackage[leqno]{amsmath}

\usepackage{tikz-cd}

\theoremstyle{plain}
\newtheorem{theorem}{Theorem}[section]
\newtheorem{lemma}[theorem]{Lemma}
\newtheorem{proposition}[theorem]{Proposition}
\newtheorem{corollary}[theorem]{Corollary}

\theoremstyle{plain}
\newtheorem{definition}[theorem]{Definition}

\newtheorem{example}{Example}[section]

\theoremstyle{plain}
\newtheorem{remark}[theorem]{Remark}

\RequirePackage{dsfont}
\newcommand{\N}{\field{N}}

\newcommand{\R}{\field{R}}

\newcommand{\field}[1]{\mathbb{#1}}
\newcommand{\wto}{\rightharpoonup}

\tikzset{%
    symbol/.style={%
        draw=none,
        every to/.append style={%
            edge node={node [sloped, allow upside down, auto=false]{$#1$}}}
    }
}
\usepackage{enumitem}

\tikzset{shorten <>/.style={shorten >=#1,shorten <=#1}}

\begin{document}
\begin{abstract}
    We study the quasilinear elliptic system
\[
-\textbf{div}(A(x,\boldsymbol u)|D\boldsymbol u|^{p-2}D\boldsymbol u)
+\frac{1}{p}\nabla_{\boldsymbol s}A(x,\boldsymbol u)|D\boldsymbol u|^p
= \boldsymbol g(x,\boldsymbol u)
\quad \text{in } \Omega,
\qquad \boldsymbol u = 0 \text{ on } \partial\Omega,
\]
where $p>1$, $\Omega\subset\mathbb R^N$ is a bounded domain with $N>1$, and $\boldsymbol g$ satisfies a subcritical growth condition. In this setting, the associated energy functional is, in general, neither differentiable nor locally Lipschitz in the natural Sobolev space.

By exploiting a nonsmooth critical point theory, we prove the existence of infinitely many weak solutions by means of an Equivariant Mountain Pass Theorem. In addition, we establish $L^\infty$-bounds for weak solutions by adapting a Moser-type iteration.

\noindent \textbf{Keywords:}  subcritical nonlinearities, vectorial $p$-Laplacian, $L^\infty$-estimates, quasilinear elliptic systems, nonsmooth critical point theory.\\
\noindent \textbf{2020 MSC:}  35A01, 35A15, 35J20, 35J25, 35J62.

\end{abstract}
\maketitle
\section{Introduction}

Quasilinear elliptic systems, driven by the $p$-Laplace operator
\[
-\boldsymbol\Delta_p\boldsymbol u:=-\textbf{div}(|D\boldsymbol u|^{p-2}D\boldsymbol u),\quad p>1,
\]
arise in several contexts in nonlinear analysis and continuum mechanics. A natural generalization consists in considering operators of the form
\[
-\textbf{div}(A(x,\boldsymbol u)|D\boldsymbol u|^{p-2}D\boldsymbol u)+\frac1p\nabla_{\boldsymbol s}A(x,\boldsymbol u)|D\boldsymbol u|^p,
\]
which lead to a wider class of quasilinear problems.

In this setting, however, a major difficulty appears: the associated energy functional is, in general, neither differentiable nor locally Lipschitz in the natural Sobolev space. As a consequence, classical variational methods cannot be directly applied, and one needs to resort to tools from nonsmooth critical point theory.

\subsection{Setting of the problem}
In order to study the problem, let $p>1$ and let $\Omega\subset\mathbb R^N$ be a bounded domain. 
We consider the energy functional $\mathcal J:W_0^{1,p}(\Omega)\to\mathbb R$ defined by
\[
\mathcal J(\boldsymbol u)=\frac1p\int_\Omega A(x,\boldsymbol u)|D\boldsymbol u|^p-\int_\Omega G(x,\boldsymbol u).
\]

We assume that $A:\Omega\times\R^m\to\R$ is a $C^1$-Carathéodory function satisfying, for a.e. $x\in\Omega$ and every $\boldsymbol s\in\R^m$,
\begin{equation}\tag{$a.1$}\label{a.1} 
|A(x,\boldsymbol s)|,\ |\nabla_{\boldsymbol s}A(x,\boldsymbol s)|\le C_0, 
\end{equation} 
\begin{equation}\tag{$a.2$}\label{a.2}
A(x,\boldsymbol s)\ge\nu_0>0,
\end{equation}
\begin{equation}\tag{a.3}\label{a.3} 
\boldsymbol s\cdot\nabla_{\boldsymbol s}A(x,\boldsymbol s)\ge0,
\end{equation}
Moreover, we assume that there exists a Lipschitz continuous function $\psi:\R\to\R$ such that
\begin{equation}\tag{$\psi$}\label{psi ipotesi}
   \sum_{j=1}^m\left\{ \sum_{i=1}^m\left[\frac{\sigma_i}{p}\partial_{s_i}A(x,\boldsymbol s)e^{\sigma_i\psi(s_i')-\sigma_i\psi(s_i)}\right]
   -A(x,\boldsymbol s)\psi'(s_j)e^{\sigma_j\psi(s_j')-\sigma_j\psi(s_j)}\right\}|\boldsymbol\xi_j|^2
    \le0,
\end{equation}
for every $\sigma_1,\dots,\sigma_m\in\{-1,1\}$, $\boldsymbol\xi_1,\dots,\boldsymbol \xi_m\in\R^N$, and $\boldsymbol s,\boldsymbol s'\in\R^m$ with $|\boldsymbol s-\boldsymbol s'|\le1$.

\begin{remark}
Assumption \eqref{psi ipotesi} improves hypothesis \textbf{A2} in \cite{SquassinaDiagonalSystems99}, where $\psi$ is required to be bounded. 
In the scalar case $m=1$, condition \eqref{psi ipotesi} reduces to
\[
\frac{\sigma}{p}\partial_sA(x,s)-A(x,s)\psi'(s)\le0,
\]
which is satisfied, for instance, by choosing $\psi(s)=\frac{C_0}{p\nu_0}s$. 

If $m>1$, the Lipschitz continuity of $\psi$ yields
\[
\psi(s_j')-\psi(s_j)\le|\psi'|_\infty\,|\boldsymbol s-\boldsymbol s'|\le|\psi'|_\infty,
\]
and therefore \eqref{psi ipotesi} holds provided that
\[
\psi'(s_j)\ge\frac{C_0m}{\nu_0p}e^{2|\psi'|_\infty}.
\]
In particular, if $\psi'\equiv C>0$, condition \eqref{psi ipotesi} is satisfied whenever
\[
\frac{C_0m}{\nu_0p}\le\frac{1}{2e}.
\]
\end{remark}

We further assume that $G:\Omega\times\R^m\to\R$ is a $C^1$-Carathéodory function such that
\[
G(x,\boldsymbol 0)=0, \qquad \nabla_{\boldsymbol s}G(x,\boldsymbol 0)=\boldsymbol 0,
\]
and whose gradient $\boldsymbol g:=\nabla_{\boldsymbol s}G$ satisfies the growth condition
\begin{equation}\tag{$g.1$}\label{g.1}
|\boldsymbol g(x,\boldsymbol s)|\le a(x)+b|\boldsymbol s|^{q-1},
\end{equation}
where $a\in L^r(\Omega),b\ge0$ and
\[
r \ge
\begin{cases}
\frac{Np}{Np-N+p} &\text{if } N> p,\\
1&\text{if } N\le p,
\end{cases}
\qquad
p<q<p^*:=
\begin{cases}
\frac{Np}{N-p} & \text{if } N > p, \\
+\infty & \text{if } N \le p.
\end{cases}
\]

When $N<p$, due to the embedding $W_0^{1,p}(\Omega)\hookrightarrow L^\infty(\Omega)$, the analysis becomes simpler. 
For this reason, in the sequel, we focus on the case $N\ge p$.

Finally, we assume a super-$p$-linear behavior at infinity. Namely, there exist $R>0$, $\mu>p$, and $\gamma\in(0,\mu-p)$ such that, for $|\boldsymbol s|\ge R$,
\begin{align}
0<\mu G(x,\boldsymbol s)&\le \boldsymbol g(x,\boldsymbol s)\cdot \boldsymbol s, \tag{$g.2$}\label{g.2}\\
\boldsymbol s\cdot\nabla_{\boldsymbol s}A(x,\boldsymbol s)&\le\gamma A(x,\boldsymbol s). \tag{$a.4$}\label{a.4}
\end{align}

Under the above assumptions, the functional $\mathcal J$ is well defined and continuous on $W_0^{1,p}(\Omega)$. Moreover,
\begin{align*}
    \langle \mathcal J'(\boldsymbol u),\boldsymbol v\rangle&=
    \int_\Omega A(x,\boldsymbol u)|D\boldsymbol u|^{p-2}D\boldsymbol u\cdot D\boldsymbol v+\frac1p\int_\Omega \left(\nabla_{\boldsymbol s}A(x,\boldsymbol u)\cdot \boldsymbol v\right)|D\boldsymbol u|^p
    -\int_\Omega\boldsymbol g(x,\boldsymbol u)\cdot\boldsymbol v,
\end{align*}
for every $\boldsymbol v\in W_0^{1,p}(\Omega)\cap L^\infty(\Omega)$.

This naturally leads to the following notion of weak solution.
\begin{definition}
    A function $\boldsymbol u\in W_0^{1,p}(\Omega)$ is a weak solution of
    \[
    \tag{$\mathcal P$}\label{P}
    \begin{cases}
        -\textbf{div}(A(x,\boldsymbol u)|D\boldsymbol u|^{p-2}D\boldsymbol u)+\frac1p\nabla_{\boldsymbol s}A(x,\boldsymbol u)|D\boldsymbol u|^p=\boldsymbol g(x,\boldsymbol u)&\text{in $\Omega$}\\
        \boldsymbol u=0&\text{on $\partial\Omega$,}
    \end{cases}
    \]
    if
    \[
    \int_\Omega A(x,\boldsymbol u)|D\boldsymbol u|^{p-2}D\boldsymbol u\cdot D\boldsymbol v+\frac1p\int_\Omega \left(\nabla_{\boldsymbol s}A(x,\boldsymbol u)\cdot \boldsymbol v\right)|D\boldsymbol u|^p=\int_\Omega\boldsymbol g(x,\boldsymbol u)\cdot\boldsymbol v,
    \]
    for every $\boldsymbol v\in C_c^\infty(\Omega)$.
\end{definition}

By a standard density argument, the above identity extends to all test functions $\boldsymbol v\in W_0^{1,p}(\Omega)\cap L^\infty(\Omega)$.

Observe that, when $A\equiv 1$, problem \eqref{P} reduces to the classical vectorial $p$-Laplacian
\[
\begin{cases}
-\boldsymbol\Delta_p\boldsymbol u=\boldsymbol g(x,\boldsymbol u)
& \text{in }\Omega,\\
\boldsymbol u=0 & \text{on }\partial\Omega,
\end{cases}
\]
since $\nabla_{\boldsymbol s}A\equiv 0$.

We now provide an example of a function $A(x,\boldsymbol s)$ satisfying the above assumptions.

\begin{example}
Let $\varepsilon>0$ and define
\[
A(x,\boldsymbol s)=A(\boldsymbol s)
:=1+\varepsilon\frac{|\boldsymbol s|^{p}}{1+|\boldsymbol s|^{p}}
=\frac{1+(1+\varepsilon)|\boldsymbol s|^p}{1+|\boldsymbol s|^p}.
\]
Then
\[
1\le A(\boldsymbol s)<1+\varepsilon 
\qquad \text{for every }\boldsymbol s\in\R^m.
\]

For $\boldsymbol s\neq 0$ we compute
\[
\nabla_{\boldsymbol s}A(\boldsymbol s)
=\varepsilon p\frac{|\boldsymbol s|^{p-2}}{(1+|\boldsymbol s|^p)^2}\boldsymbol s,
\]
and therefore
\[
|\nabla_{\boldsymbol s}A(\boldsymbol s)|
\le
\varepsilon p
\max_{\rho>0}\frac{\rho^{p-1}}{(1+\rho^p)^2}
=
\varepsilon \frac{(p+1)^2}{4p}
\left(\frac{p-1}{p+1}\right)^{\frac{p-1}{p}}.
\]
Moreover,
\[
\nabla_{\boldsymbol s}A(\boldsymbol s)\cdot\boldsymbol s
=
\frac{\varepsilon p\,|\boldsymbol s|^{p}}{(1+|\boldsymbol s|^{p})^2}
\ge 0
\quad \text{for every }\boldsymbol s\in\R^m.
\]
Hence \eqref{a.1}--\eqref{a.3} hold with 
\[
\nu_0=1
\quad \text{and} \quad
C_0=
\max\left\{
1+\varepsilon,\,
\varepsilon \frac{(p+1)^2}{4p}
\left(\frac{p-1}{p+1}\right)^{\frac{p-1}{p}}
\right\}.
\]

Furthermore, for $|\boldsymbol s|\ge R$ we have
\[
\frac{\nabla_{\boldsymbol s}A(\boldsymbol s)\cdot\boldsymbol s}{A(\boldsymbol s)}
=
\frac{\varepsilon p\,|\boldsymbol s|^{p}}
{(1+|\boldsymbol s|^{p})(1+(1+\varepsilon)|\boldsymbol s|^p)}
\le
\frac{\varepsilon p}{1+|\boldsymbol s|^{p}}
\le
\frac{\varepsilon p}{1+R^{p}}.
\]
Thus, given any $\gamma>0$, choosing $R>0$ sufficiently large so that
\[
\frac{\varepsilon p}{1+R^{p}}\le \gamma,
\]
condition \eqref{a.4} follows.

Finally, since 
\[
\frac{\nabla_{\boldsymbol s}A(\boldsymbol s)\cdot\boldsymbol s}
{A(\boldsymbol s)}
\le \varepsilon p,
\]
we see that \eqref{psi ipotesi} is satisfied provided
\[
\psi'(s_j)\ge \varepsilon m e^{2|\psi'|_\infty}
\quad \text{for every }
s_j\in\R,\ j=1,\dots,m.
\]
In particular, choosing $\psi(t)=\tfrac{t}{2}$ and assuming
\[
\varepsilon \le \frac{1}{2 e m},
\]
condition \eqref{psi ipotesi} follows immediately.
\end{example}

\subsection{State of the art}
The main aim of this paper is to investigate the existence of critical points of $\mathcal J$ within the framework of nonsmooth critical point theory \cite{nonsmooththeory1, degiovanni1994critical}. This approach has proven to be particularly effective in dealing with quasilinear problems where the associated energy functional is not differentiable in the classical sense, and has been successfully developed in several contributions for both scalar and vectorial equations \cite{caninoquasilineare1, caninoserdica, nonsmooththeory1, CaninoMauroNeumann2026, pellacci1997critical, squassina2009existence}.

In the \emph{scalar setting} (m=1), the existence of infinitely many solutions for Dirichlet problems of the type
\[
-\operatorname{div}(\mathcal A(x,u)\nabla u)+\frac12\, D_s \mathcal A(x,u)\nabla u\cdot\nabla u = g(x,u), 
\qquad 
u\in W_0^{1,2}(\Omega),
\]
where $\mathcal A(x,s)$ is a matrix with $C^1$-Carathéodory coefficients, was established in \cite{caninoquasilineare1, caninoserdica, nonsmooththeory1} by exploiting nonsmooth variational techniques.

 These results have later been extended to more general quasilinear equations of the form
\[
-\operatorname{div}(\mathscr A(x,u,\nabla u)) + \mathscr B(x,u,\nabla u) = 0\quad \text{in }\Omega, 
\qquad 
u\in W_0^{1,p}(\Omega),
\]
as shown in \cite{pellacci1997critical}, where multiplicity results are obtained within the same nonsmooth framework.

Related approaches can be found in \cite{arcoyaboccardo1, candela2009some, candela2009infinitely}, where the functional is restricted to the subspace $W_0^{1,p}(\Omega)\cap L^\infty(\Omega)$ in order to recover differentiability; however, this choice leads to additional difficulties in the verification of the Palais--Smale condition.

In the \emph{vectorial case}, the literature is more recent and still less developed. In \cite{canino2025vectorial}, existence results are obtained for every $p>1$, while $L^\infty$-estimates for weak solutions are proved for $p\ge2$, in the case of constant coefficient $A(x,\boldsymbol s)=1$.

The case of nonconstant coefficients $A(x,\boldsymbol s)$ has been addressed in \cite{arioli2000existence, SquassinaDiagonalSystems99, squassina2009existence, struwe1983quasilinear}, where nonsmooth critical point theory is applied to the vector-valued problem:
\[
-\operatorname{div}(\mathcal A_k(x,\boldsymbol u)\nabla u_k)
   + \frac12 \sum_{i=1}^m D_{s_k}\mathcal A_i(x,\boldsymbol u)\,\nabla u_i\cdot\nabla u_i
   = g_k(x,\boldsymbol u), 
   \]
with $k=1,\dots,m,$ although restricted to the case $p=2$.

\subsection{Main result}

\begin{theorem}\label{main thm multiplicity}
 Let $p>1$ and suppose that $A(x,\boldsymbol s)=A(x,-\boldsymbol s), G(x,\boldsymbol s)=G(x,-\boldsymbol s)$.   Assume also that \eqref{a.1}-\eqref{a.4}, \eqref{psi ipotesi}, and \eqref{g.1}-\eqref{g.2} hold. 
 \begin{itemize}
\item[$(i)$] There exists a sequence \(\{\boldsymbol u_n\} \subset W_0^{1,p}(\Omega)\) of weak solutions to \eqref{P} such that
\[
\mathcal J(\boldsymbol u_n)\to+\infty,\qquad\text{as $n\to+\infty$.}
\]
\item[$(ii)$]
 If  \eqref{g.1} is satisfied for $r\ge\frac Np$, any weak solution of \eqref{P} is also bounded. 
 In particular, $\boldsymbol u_n\in W_0^{1,p}(\Omega)\cap L^\infty(\Omega)$ for every $n\ge1$.
 \end{itemize}
\end{theorem}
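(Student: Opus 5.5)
Part $(i)$ will follow from the nonsmooth critical point theory of \cite{nonsmooththeory1, degiovanni1994critical} for continuous functionals. We work with the weak slope $|d\mathcal J|(\boldsymbol u)$ and apply the Equivariant Mountain Pass Theorem in the version for even continuous functionals. We use the splitting $W_0^{1,p}(\Omega)=V^-\oplus V^+$, where $V^-$ is finite dimensional and $V^+$ has finite codimension. Three ingredients are needed.

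\emph{(a) Critical points are weak solutions.} If $|d\mathcal J|(\boldsymbol u)<+\infty$, then $\mathcal J'(\boldsymbol u)$ extends in the sense that
\[
\int_\Omega A(x,\boldsymbol u)|D\boldsymbol u|^{p-2}D\boldsymbol u\cdot D\boldsymbol v+\frac1p\int_\Omega (\nabla_{\boldsymbol s}A(x,\boldsymbol u)\cdot\boldsymbol v)|D\boldsymbol u|^p-\int_\Omega \boldsymbol g(x,\boldsymbol u)\cdot\boldsymbol v
\]
is bounded by $|d\mathcal J|(\boldsymbol u)\,\|\boldsymbol v\|$ for $\boldsymbol v\in W_0^{1,p}\cap L^\infty$. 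In particular, $|d\mathcal J|(\boldsymbol u)=0$ gives a weak solution. The standard argument builds explicit deformations in $L^\infty$ directions, using that $\mathcal J$ is differentiable along bounded directions.

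\emph{(b) Geometry.} By \eqref{g.1}, $G(x,\boldsymbol s)\le \varepsilon|\boldsymbol s|^p+C_\varepsilon|\boldsymbol s|^q$ near the origin. The argument then splits by subspace:
\begin{itemize}
\item On $V^+=\overline{\operatorname{span}}\{e_k:k\ge n\}$, by \eqref{a.2} and the usual estimate $\beta_n=\sup\{|\boldsymbol u|_q:\|\boldsymbol u\|=1,\ \boldsymbol u\in V^+\}\to0$, $\mathcal J$ is bounded below on a sphere by a level $c_n\to+\infty$.
\item On finite dimensional subspaces, \eqref{g.2} gives $G\ge c|\boldsymbol s|^\mu-C$. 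Together with \eqref{a.1} this yields $\mathcal J\to-\infty$.
\end{itemize}
Evenness in $\boldsymbol s$ of $A$ and $G$ makes $\mathcal J$ even. The minimax levels therefore diverge, which gives $\mathcal J(\boldsymbol u_n)\to+\infty$.

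\emph{(c) Concrete Palais--Smale condition.} This is the main obstacle. Let $(\boldsymbol u_h)$ satisfy $\mathcal J(\boldsymbol u_h)\to c$ and $|d\mathcal J|(\boldsymbol u_h)\to0$.
\begin{itemize}
\item \emph{Boundedness.} Test with $\boldsymbol u_h$; this is legitimate after truncation and approximation via (a). Compute $\mu\mathcal J(\boldsymbol u_h)-\langle\cdot,\boldsymbol u_h\rangle$. The $A$-terms give $\int(\frac{\mu}{p}A-A-\frac1p\boldsymbol u\cdot\nabla_{\boldsymbol s}A)|D\boldsymbol u|^p$. By \eqref{a.4} this is at least $\frac{\mu-p-\gamma}{p}\nu_0\int|D\boldsymbol u|^p$ outside $\{|\boldsymbol u|\le R\}$, and inside that set it is controlled by \eqref{a.1}. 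Hence $(\boldsymbol u_h)$ is bounded.
\item \emph{Strong convergence.} Pass to a weakly convergent subsequence, $\boldsymbol u_h\wto\boldsymbol u$. The difficulty is the term $\nabla_{\boldsymbol s}A\,|D\boldsymbol u|^p$, which is only $L^1$. Here we use \eqref{psi ipotesi}. We test with $\boldsymbol v_h$ having components $(u_{h,j}-u_j)\,e^{\sigma_j\psi(u_{h,j})-\sigma_j\psi(u_j)}$, with $\boldsymbol u$ replaced by suitable truncations to keep $|\boldsymbol s-\boldsymbol s'|\le1$ and boundedness. This is the vectorial analogue of the exponential test functions of Boccardo--Murat--Puel type. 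Condition \eqref{psi ipotesi} ensures that the bad terms carrying $|D\boldsymbol u_h|^p$ have a favorable sign.
\item \emph{Conclusion.} We obtain $\limsup\int A(x,\boldsymbol u_h)(|D\boldsymbol u_h|^{p-2}D\boldsymbol u_h-|D\boldsymbol u|^{p-2}D\boldsymbol u)\cdot D(\boldsymbol u_h-\boldsymbol u)\le0$. The $(S_+)$ property of the $p$-Laplacian, together with a.e. convergence of $A(x,\boldsymbol u_h)$, then gives strong convergence.
\end{itemize}
I expect the delicate bookkeeping of the truncation levels and of the sign choices $\sigma_j$ to be the hardest part.

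Part $(ii)$ uses a Moser iteration. For a weak solution $\boldsymbol u$, test with $\boldsymbol v=\boldsymbol u\,\min(|\boldsymbol u|,k)^{p(\beta-1)}$, which lies in $W_0^{1,p}\cap L^\infty$. The term $\nabla_{\boldsymbol s}A\cdot\boldsymbol v\,|D\boldsymbol u|^p$ is a nonnegative multiple of $\boldsymbol u\cdot\nabla_{\boldsymbol s}A\ge0$ by \eqref{a.3}, so it can be dropped. The principal part, by \eqref{a.2}, controls $\int|D(T_k(|\boldsymbol u|)^{\beta-1}|\boldsymbol u|)|^p$. For the right side we use $|\boldsymbol g(x,\boldsymbol u)|\,|\boldsymbol u|\le (a(x)+b|\boldsymbol u|^{q-1})|\boldsymbol u|$ with $a\in L^r$, $r\ge N/p$. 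Writing $b|\boldsymbol u|^{q-p}\in L^{N/p}$ locally small on level sets, Sobolev's inequality gives a reverse Hölder step $|\boldsymbol u|_{\beta p^*}\le (C\beta)^{1/\beta}|\boldsymbol u|_{\beta p}$. The first step uses a Brezis--Kato type splitting of the coefficient into a small $L^{N/p}$ part and an $L^\infty$ part. Iterating over $\beta_k=(p^*/p)^k$ yields $\boldsymbol u\in L^\infty$. Applying this to the solutions of $(i)$ gives the last claim.
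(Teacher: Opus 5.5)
Your part (i)(c) has two gaps. The first is in \emph{boundedness}. On $\{|\boldsymbol u_h|\le R\}$ the $A$-terms of $\mu\mathcal J(\boldsymbol u_h)-\langle\boldsymbol\omega_h,\boldsymbol u_h\rangle$ have integrand $\bigl(\frac{\mu-p}{p}A(x,\boldsymbol u_h)-\frac1p\,\boldsymbol u_h\cdot\nabla_{\boldsymbol s}A(x,\boldsymbol u_h)\bigr)|D\boldsymbol u_h|^p$. Here \eqref{a.1} only gives the lower bound $\frac{(\mu-p)\nu_0-C_0R}{p}|D\boldsymbol u_h|^p$, which is negative unless $C_0R<(\mu-p)\nu_0$. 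Since $\int_{\{|\boldsymbol u_h|\le R\}}|D\boldsymbol u_h|^p$ is part of the quantity $\|\boldsymbol u_h\|^p$ you are trying to bound, ``controlled by \eqref{a.1}'' does not close the estimate. The missing ingredient is the paper's Proposition~\ref{stima per limitatezza}:
\[
\int_{\{|\boldsymbol u_h|\le R\}}A|D\boldsymbol u_h|^p\le\varepsilon\int_{\{|\boldsymbol u_h|\ge R\}}A|D\boldsymbol u_h|^p+K(R,\varepsilon).
\]
It is proved by testing with a truncation $\boldsymbol\theta_\delta$ that equals $\boldsymbol u_h$ on $\{|\boldsymbol u_h|\le R\}$, is a nonnegative multiple of $\boldsymbol u_h$, and vanishes where $|\boldsymbol u_h|$ is large, so that $\int_\Omega\boldsymbol g(x,\boldsymbol u_h)\cdot\boldsymbol\theta_\delta$ is bounded a priori. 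Then \eqref{a.3} disposes of the $\nabla_{\boldsymbol s}A$-term, and $D\boldsymbol u_h\cdot D\boldsymbol\theta_\delta\ge-\delta|D\boldsymbol u_h|^2$ where $|\boldsymbol u_h|\ge R$. Choosing $\varepsilon C_0R/\nu_0<\mu-p-\gamma$, your computation then closes.

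The second gap, in \emph{strong convergence}, is more serious. Condition \eqref{psi ipotesi} is a sign condition for directions $\boldsymbol\sigma\varphi$ with a fixed $\boldsymbol\sigma\in\{-1,1\}^m$ and one common amplitude $\varphi\ge0$. Your test function has components $w_jE_j$, where $w_j$ truncates $u_{h,j}-u_j$ and $E_j=e^{\sigma_j\psi(u_{h,j})-\sigma_j\psi(u_j)}$. For it, the terms quadratic in $D\boldsymbol u_h$ are
\[
\sum_{j=1}^m\Bigl[\frac1p\sum_{i=1}^m\partial_{s_i}A(x,\boldsymbol u_h)\,w_iE_i+A(x,\boldsymbol u_h)\,\sigma_jw_j\,\psi'(u_{h,j})E_j\Bigr]|\nabla u_{h,j}|^2|D\boldsymbol u_h|^{p-2}.
\]
Where $|w_j|$ is small compared with $|\boldsymbol w|$, the $j$-th bracket is essentially $\frac1p\nabla_{\boldsymbol s}A(x,\boldsymbol u_h)\cdot(w_iE_i)_{i}$. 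This has no sign whatever the $\sigma$'s, and nothing controls $|\nabla u_{h,j}|$ there. The coupling term spreads $|\boldsymbol w|$ over all components of $D\boldsymbol u_h$, but the exponential weight only returns $|w_j|\,|\nabla u_{h,j}|^2$. So for $m\ge2$, \eqref{psi ipotesi} does not yield $\limsup\int_\Omega A(x,\boldsymbol u_h)(|D\boldsymbol u_h|^{p-2}D\boldsymbol u_h-|D\boldsymbol u|^{p-2}D\boldsymbol u)\cdot D(\boldsymbol u_h-\boldsymbol u)\le0$. The obstruction is structural, not bookkeeping.

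The paper never aims at $(S_+)$ directly. It proceeds in three steps.
\begin{enumerate}
\item Lemma~\ref{lemma convergence gradient} (Dal Maso--Murat) gives $D\boldsymbol u_h\to D\boldsymbol u$ a.e. This applies because the divergence term equals a term converging strongly in $W^{-1,p'}$ plus one bounded in $L^1$.
\item Proposition~\ref{chiusura debole di soluzioni} uses \eqref{psi ipotesi} only with the equal-amplitude tests $\varphi H(|\boldsymbol u-\boldsymbol u_h|)\bigl(\sigma_je^{\sigma_j\psi(u_j)-\sigma_j\psi(u_{h,j})}\bigr)_j$, for which the quadratic terms are $\le0$. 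With Fatou and both signs of $\boldsymbol\sigma$, it shows that $\boldsymbol u$ solves the limit equation.
\item Theorem~\ref{compactness} compares the energy identities for $\boldsymbol u_h$ and $\boldsymbol u$ and applies Fatou to $(\nabla_{\boldsymbol s}A(x,\boldsymbol u_h)\cdot\boldsymbol u_h)|D\boldsymbol u_h|^p\ge0$. This gives $\limsup\int_\Omega A(x,\boldsymbol u_h)|D\boldsymbol u_h|^p\le\int_\Omega A(x,\boldsymbol u)|D\boldsymbol u|^p$, hence norm convergence.
\end{enumerate}

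Your geometry and your Moser scheme for (ii) agree with the paper in substance. One caveat in (ii): a $\beta$-independent constant for the $a(x)$-term needs $r>N/p$. At $r=N/p$ the splitting level depends on $\beta$, just as $K$ depends on $s$ in the paper's Step 1.
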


Let $\boldsymbol u \in W_0^{1,p}(\Omega;\mathbb R^m)$ be a weak solution to
\[
\begin{cases}
-\boldsymbol\Delta_p \boldsymbol u = \boldsymbol g(x,\boldsymbol u) & \text{in } \Omega,\\
\boldsymbol u = 0 & \text{on } \partial\Omega.
\end{cases}
\]
As a consequence of Theorem~\ref{main thm multiplicity}-$(ii)$ (see also Corollary~\ref{regularity pLaplacian}) and of the results in~\cite{MingioneSteinTheorem}, if
\[
|\boldsymbol g(x,\boldsymbol s)| \le a(x) + b|\boldsymbol s|^{q-1},
\]
with $a \in L^r(\Omega)$ for some $r > N$ and $p < q < p^*$, then $\boldsymbol u \in C^1(\Omega)$.

Moreover, if in addition $\partial\Omega \in W^{2}L^{N-1,1}$\footnote{$\partial\Omega$ is locally the graph of a function whose second derivatives belong to the Lorentz space $L^{N-1,1}$. Recall that
\(
L^t \subset L^{N-1,1} \subset L^{N-1}
\quad \text{for every } t > N-1.
\)
}, then $|D\boldsymbol u| \in L^\infty(\Omega)$, and therefore $\boldsymbol u$ is Lipschitz continuous in $\Omega$, by~\cite{CianchiArma2014}. 

Finally, if $\partial\Omega \in C^{1,\alpha}$ for some $\alpha \in (0,1)$ and there exists a constant $K_0 > 0$ such that
\[
|\boldsymbol g(x,\boldsymbol s)| \le K_0\bigl(1 + |\boldsymbol s|^{q-1}\bigr),
\qquad p < q < p^*,
\]
then $\boldsymbol u \in C^{1,\beta}(\overline\Omega)$ for some $\beta \in (0,1)$ by \cite{benedetto1989boundary}.

We also recall that the regularity theory for quasilinear elliptic systems is by now well developed in the case of source terms independent of the unknown, namely when $\boldsymbol g(x,\boldsymbol u)=\boldsymbol g(x)$; we refer to 
\cite{balci2022pointwise,benedetto1989boundary,CianchiArma2014,cianchi2019optimal,MingioneSteinTheorem,montoro2025regularity,SchmidtMinimizer14,sciunzi2025global}
and the references therein.

\begin{remark}
    Let $N>2$, $p=2, m=1$ and consider \eqref{P} with $\Omega=B_1(0)$, $ g(x,s)=0$ and 
    \[
    A(x,s)=1+\frac{1}{|x|^{12(N-2)}e^s+1}.
    \]
    Then $u(x)=-12(N-2)\log|x|\in W_0^{1,2}(\Omega)$ is a weak solution of 
    \[
    \begin{cases}
    -\text{div}(A(x,u)\nabla u)+\frac12 D_sA(x,u)|\nabla u|^2=0&\text{in $\Omega$}\\
    u=0&\text{on $\partial\Omega$},
    \end{cases}
    \]
     but
     \[D_sA(x,s)=-\frac{|x|^{12(N-2)}e^s}{(|x|^{12(N-2)}e^s+1)^2}\le0,\]
   and  $u\notin L^\infty(\Omega)$, even though $u\in L^t(\Omega)$ for every $t\ne\infty$. In particular, singular solutions may exist if the assumption \eqref{a.3}
    is not satisfied.
\end{remark}

\subsection{Structure of the paper} 
We conclude this introduction by briefly describing the structure of the paper.
In Section 2 we recall some basic tools from nonsmooth critical point theory and establish preliminary results concerning the functional $\mathcal J$, including properties of the weak slope and suitable compactness conditions. 
In Section 3 we prove \textit{a priori} $L^\infty$-bounds for weak solutions by means of a Moser-type iteration argument. 
Section 4 is devoted to the analysis of suitable compactness properties, where we establish strong convergence results for appropriate sequences. 
Finally, in Section 5 we combine the previous results to prove the multiplicity of weak solutions, exploiting an Equivariant Mountain Pass Theorem.

\subsection{Notation}

We set
\(
W_0^{1,p}(\Omega;\mathbb{R}^m)
:= \big( W_0^{1,p}(\Omega;\mathbb{R}) \big)^m,
\)
and write
\(
W^{-1,p'}(\Omega;\mathbb{R}^m)
\)
for its dual space. Whenever no confusion arises, we simply write
\(
W_0^{1,p}(\Omega)
\quad \text{and} \quad
W^{-1,p'}(\Omega).
\)
The same convention applies to $C_c^\infty(\Omega)$.

If $p>1$ and $q\in[1,\infty]$, we use the shorthand
\[
\|\boldsymbol u\|:=\|\boldsymbol u\|_{W_0^{1,p}(\Omega)}, 
\qquad 
|\boldsymbol u|_q:=\|\boldsymbol u\|_{L^q(\Omega)}.
\]

Given a function $\boldsymbol u \in W_0^{1,p}(\Omega)$, we denote by $D\boldsymbol u$ its Jacobian matrix, and we use the symbol "$\ \cdot\ $" to denote the scalar product both between real-valued matrices and between vectors in $\mathbb{R}^N$. In particular, for every $\boldsymbol u,\boldsymbol v\in W_0^{1,p}(\Omega)$, we have
\[
D\boldsymbol u\cdot D\boldsymbol v=\sum_{i=1}^m\nabla u_i\cdot\nabla v_i=\sum_{i=1}^m\sum_{j=1}^N\frac{\partial u_i}{\partial x_j}\ \frac{\partial v_i}{\partial x_j}.
\]

For every $(\boldsymbol v,\boldsymbol w)\in\R^{m+N}$, $\boldsymbol v\otimes \boldsymbol w\in \R^{m\times N}$ is the matrix with coefficients
\[
(\boldsymbol v\otimes \boldsymbol w)_{ij}=v_iw_j,\qquad \text{for every $i=1,\dots,m, \ j=1,\dots,N$.}
\]

\section{Preliminaries}
We recall some results about the critical point theory of continuous functionals, developed in \cite{nonsmooththeory1, corvellec1993deformation,degiovanni1994critical}. In this setting, we consider $(X,d)$ a metric space and $f: X \to \R$ a continuous functional.

\begin{definition} \label{definition 1}
Let $(X,d)$ be a metric space and let $f: X \to \mathbb{R}$ be a continuous function. We consider $\sigma \ge 0$ such that there exist $\delta > 0$ and a continuous map $\mathscr{H}: B_{\delta}(u) \times [0, \delta] \to X$ such that
\begin{align*}
d(\mathscr{H}(v, t), v) \le t, \qquad
f(\mathscr{H}(v, t)) \le f(v) - \sigma t.
\end{align*}
  We denote with $|df|(u)$ the supremum of all such $\sigma$ and we call it the \emph{weak slope} of $f$ at $u$.
\end{definition}

\begin{theorem}[{\cite[Theorem 1.1.2]{nonsmooththeory1}}]\label{theorem 1}
Let $E$ be a normed space and $X \subset E$ an open subset. Let us fix $u \in X$ and $v \in E$ with $\|v\| = 1$. For each $w \in X$ we define
$$\overline{D}_+f(w)[v] := \limsup_{t \to 0^+} \frac{f(w + tv) - f(w)}{t}.$$
Then $|df|(u) \ge -\limsup_{w \to u} \overline{D}_+f(w)[v]$.
\end{theorem}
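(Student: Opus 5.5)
The plan is to reduce the statement to a one-dimensional estimate along the fixed direction $v$, using only the definition of the weak slope (Definition~\ref{definition 1}). Set $\sigma_0:=-\limsup_{w\to u}\overline{D}_+f(w)[v]$. If $\sigma_0\le 0$ there is nothing to prove, since $|df|(u)\ge 0$ by convention: $\sigma=0$ is always admissible with $\mathscr H(w,t)=w$. So assume $\sigma_0>0$ and fix $0<\sigma<\sigma_0$. The goal is then to produce $\delta>0$ and a continuous deformation $\mathscr H:B_\delta(u)\times[0,\delta]\to X$ with $d(\mathscr H(w,t),w)\le t$ and $f(\mathscr H(w,t))\le f(w)-\sigma t$. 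Letting $\sigma\uparrow\sigma_0$ gives the claim.

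The natural candidate is the straight-line deformation $\mathscr H(w,t):=w+tv$. It is clearly continuous, and since $\|v\|=1$ it satisfies $d(\mathscr H(w,t),w)=t$. Choose a radius $\rho>0$ with two properties. First, $B_{2\rho}(u)\subset X$, which is possible because $X$ is open. Second, $\overline D_+f(z)[v]<-\sigma$ for every $z\in B_{2\rho}(u)$, which follows from the definition of $\limsup_{w\to u}$; if the limsup is taken over $w\ne u$, the point $u$ itself is handled by the same argument below, since only a.e.\ or all-but-one point matters. Put $\delta:=\rho$. Then for $w\in B_\delta(u)$ and $t\in[0,\delta]$, the whole segment $\{w+sv: s\in[0,t]\}$ lies in $B_{2\rho}(u)$. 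Hence $\mathscr H$ maps into $X$, and the function $\varphi(s):=f(w+sv)+\sigma s$ is continuous on $[0,t]$ with upper right Dini derivative $\overline D_+\varphi(s)=\overline D_+f(w+sv)[v]+\sigma<0$ for all $s\in[0,t)$.

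The key step, and the only genuinely nontrivial one, is the elementary real-analysis lemma: a continuous function $\varphi$ on $[0,t]$ with $\overline D_+\varphi\le 0$ everywhere on $[0,t)$ is nonincreasing. Here the inequality is even strict, which makes the proof easier. To prove it I would argue by contradiction. Suppose $\varphi(b)>\varphi(a)$ for some $a<b$, and take $\varepsilon>0$ small so that $\psi(s):=\varphi(s)-\varepsilon s$ still satisfies $\psi(b)>\psi(a)$. Let $s^*:=\sup\{s\in[a,b]:\psi(s)\le\psi(a)\}$. By continuity $\psi(s^*)\le\psi(a)$ and $s^*<b$. For every $s\in(s^*,b]$ we have $\psi(s)>\psi(a)\ge\psi(s^*)$, so $\overline D_+\psi(s^*)\ge 0$, that is, $\overline D_+\varphi(s^*)\ge\varepsilon>0$, a contradiction. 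With strict negativity one can take $\varepsilon=0$ directly.

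Applying the lemma gives $\varphi(t)\le\varphi(0)$, i.e. $f(w+tv)\le f(w)-\sigma t$. So $\sigma$ is admissible in Definition~\ref{definition 1}, hence $|df|(u)\ge\sigma$, and therefore $|df|(u)\ge\sigma_0$. I expect no real obstacle beyond this monotonicity lemma. The only care needed is choosing $\delta$ so that both the segment and the Dini-derivative bound stay inside the neighborhood where the limsup estimate holds.
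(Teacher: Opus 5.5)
Your proof is correct, and it is the standard argument behind this result, which the paper quotes from \cite{nonsmooththeory1} without proof. That argument takes the straight-line deformation $\mathscr H(w,t)=w+tv$ on $B_\rho(u)\times[0,\rho]$, uses the bound $\overline D_+f(z)[v]<-\sigma$ on $B_{2\rho}(u)$, and applies the monotonicity lemma for continuous functions with negative upper right Dini derivative. The only inaccuracy is in your aside on the punctured-limsup convention: an exceptional set of measure zero is \emph{not} harmless, since the Cantor function has $\overline D_+=0$ a.e.\ yet is not nonincreasing. A single exceptional point $s_0$ is harmless, but it needs one extra step beyond your $s^*$ argument: apply the lemma on $[0,s_0]$ and on $[s_0+\eta,t]$, then let $\eta\to0^+$ using the continuity of $\varphi$.
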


\begin{definition}\label{def p.critical}
Let $(X,d)$ be a metric space and let $f: X \to \mathbb{R}$ be continuous. We will say that $u \in X$ is a (lower) critical point if $|df|(u) = 0$. A (lower) critical point is said to be at the level $c \in \mathbb{R}$ if it is also true that $f(u) = c$.
\end{definition}

\begin{definition}\label{ps}
Let $(X,d)$ be a metric space and let $f:X\to\R$ be a continuous functional. A sequence $\{u_n\}\subset X$ is said a $(PS)_c$-sequence if
\begin{align*}
f(u_n)\to c,\qquad
|df|(u_n)\to0.
\end{align*}
Furthermore, we say that $f$ satisfies the $(PS)_c$-condition if every subsequence admits a convergent subsequence in $X$. 
If the $(PS)_c$-condition holds for every $c\in\R$ we say that $f$ satisfies the $(PS)$-condition.
\end{definition}
\begin{theorem}[Equivariant Mountain Pass, {\cite[Theorem 1.3.3]{nonsmooththeory1}}]\label{MPequi}	\hfill\\
Let $X$ be a Banach space and let $f:X\to\R$ be a continuous even functional. Suppose that
\begin{itemize}
\item $\exists\ \rho>0,\alpha>f(0)$ and a subspace $W\subset X$ of finite codimension such that $f\ge\alpha$ on  $\partial B_{\rho}\cap W$,
\item for every subspace $V$ of dimension $k$, there exists $R>0$ such that $f\le f(0)$ in $B_R^c\cap V$.
\end{itemize}
If $f$ satisfies the $(PS)_c$-condition for every $c\ge\alpha$, then there exists  a divergent sequence of critical values, namely there exists a sequence of critical points $\{u_n\}\subset X$ such that $c_n:=f(u_n)\to+\infty$.
\end{theorem}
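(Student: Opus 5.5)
The statement just above is Theorem~\ref{MPequi}, the Equivariant Mountain Pass Theorem, quoted from \cite[Theorem 1.3.3]{nonsmooththeory1}. I would not reprove it from scratch. Instead I would follow the classical minimax scheme of Ambrosetti--Rabinowitz, replacing the smooth deformation lemma with the nonsmooth one for continuous functionals from \cite{corvellec1993deformation,degiovanni1994critical}. The weak slope of Definition~\ref{definition 1} plays the role of $\|f'\|$ there.

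The first step is the minimax construction. Set $k_0:=\operatorname{codim}W$, and let $\mathcal V_k$ be the family of $k$-dimensional subspaces of $X$. For $k>k_0$ define
\[
\Gamma_k:=\{\,\phi\in C(X,X)\ \text{odd},\ \phi \text{ a homeomorphism},\ f(\phi(u))\le f(0)\text{ outside a bounded set}\,\},
\]
and put
\[
c_k:=\inf_{V\in\mathcal V_k,\ \phi\in\Gamma_k}\ \sup_{\phi(V)} f .
\]
The second hypothesis gives $c_k<+\infty$, since one can take $\phi=\mathrm{id}$ and use that $f\le f(0)$ off a ball of $V$. For the lower bound, the genus/Borsuk--Ulam intersection lemma shows that $\phi(V)\cap\partial B_\rho\cap W\neq\emptyset$ whenever $\dim V>\operatorname{codim}W$. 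Hence $c_k\ge\alpha$, and clearly $c_k\le c_{k+1}$.

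The second step shows that each $c_k$ is a critical value. Suppose $K_{c_k}=\emptyset$. Because $f$ satisfies $(PS)_{c_k}$, there are $\varepsilon>0$ and $\sigma>0$ such that $|df|\ge\sigma$ on $f^{-1}([c_k-\varepsilon,c_k+\varepsilon])$. The nonsmooth deformation lemma, in its equivariant version for even $f$, then provides an odd deformation $\eta$ mapping $f^{c_k+\varepsilon}$ into $f^{c_k-\varepsilon}$ and fixing $f^{\alpha'}$ for some $\alpha'<\alpha$. Composing $\eta$ with a near-optimal $\phi$ keeps us in $\Gamma_k$, which contradicts the definition of $c_k$.

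The third step proves $c_k\to+\infty$, and I expect it to be the main obstacle. Suppose instead $c_k\uparrow c<\infty$. By $(PS)_c$ the set $K_c$ is compact and symmetric, and $0\notin K_c$ because $c\ge\alpha>f(0)$. So $K_c$ has finite genus $\gamma(K_c)=n$, and it has a symmetric neighborhood $U$ with $\gamma(\overline U)=n$. Deforming $f^{c+\varepsilon}\setminus U$ into $f^{c-\varepsilon}$ and using the subadditivity of the genus, one obtains $c_{k+n}\le c-\varepsilon$ for $k$ large, which is a contradiction. To run this argument the classes must be genus-based, so I would actually redefine $c_k$ through symmetric sets $\phi(\overline{B_R}\cap V)\setminus Y$ with $\gamma(\overline Y)\le j$, as in Rabinowitz's proof.

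The delicate part is making the deformation odd and continuous when it is built only from the local maps $\mathscr H$ of Definition~\ref{definition 1}. Here I would rely directly on the equivariant deformation theorem of \cite{corvellec1993deformation,degiovanni1994critical}. With that in hand the argument is purely topological, and the theorem follows.
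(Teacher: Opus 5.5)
The paper gives no proof of this statement; it quotes it from \cite[Theorem 1.3.3]{nonsmooththeory1}. Your overall strategy is the right one: Rabinowitz's symmetric minimax scheme, with the smooth deformation lemma replaced by the equivariant nonsmooth one. However, the minimax class in your first two steps is not compatible with nonsmooth deformations, so the argument has a genuine gap.

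The deformation $\eta$ provided by \cite{corvellec1993deformation,degiovanni1994critical} is obtained by patching together the local maps $\mathscr H$ of Definition~\ref{definition 1}. It is continuous, it can be taken odd when $f$ is even, and it can be chosen to fix every $u$ with $f(u)\le c-\bar\varepsilon$. Nothing, however, makes $\eta(\cdot,1)$ injective or onto. Hence $\eta(\cdot,1)\circ\phi$ is in general not an odd homeomorphism, it does not lie in your $\Gamma_k$, and the contradiction in your second step does not close. Homeomorphism-based classes are tailored to the $C^1$ case, where $\eta(\cdot,1)$ is the time-one map of a pseudo-gradient flow.

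You cannot repair this by simply dropping the word ``homeomorphism'': then $\phi\equiv0$ is admissible and yields $c_k\le f(0)<\alpha$. So the lower bound needs a normalization of $\phi$. Even for homeomorphisms, your intersection claim $\phi(V)\cap\partial B_\rho\cap W\neq\emptyset$ requires the component of $0$ in $\{v\in V:\ \|\phi(v)\|<\rho\}$ to be bounded in $V$. The condition ``$f\circ\phi\le f(0)$ outside a bounded set'' does not give this. Your remark in the third step about genus-based sets points in the right direction, but you leave the class of $\phi$ unspecified, and that is exactly where the work lies.

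The standard remedy is to use, from the start, classes that are stable under odd \emph{continuous} maps fixing $\{f\le f(0)\}$.
\begin{itemize}
\item Read the second hypothesis for every finite-dimensional subspace; the $k$ in the statement is not quantified.
\item Let $k_0=\operatorname{codim}W$ and fix nested subspaces $E_1\subset E_2\subset\cdots$ with $\dim E_m=m$ and $E_{k_0}\oplus W=X$.
\item Choose $R_m>\rho$ with $f\le f(0)$ on $E_m\setminus B_{R_m}$, and put $D_m=\overline{B_{R_m}}\cap E_m$.
\end{itemize}
Then set
\[
G_m=\{h\in C(D_m,X):\ h \text{ odd},\ h=\mathrm{id} \text{ on } \partial B_{R_m}\cap E_m\},
\]
\[
\Gamma_j=\{h(\overline{D_m\setminus Y}):\ m\ge j,\ h\in G_m,\ Y=-Y \text{ closed},\ 0\notin Y,\ \gamma(Y)\le m-j\},
\]
where $\gamma$ is the genus, and define $c_j=\inf_{B\in\Gamma_j}\max_B f$.

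The boundary condition $h=\mathrm{id}$, together with $R_m>\rho$, makes the component of $0$ in $\{x\in D_m:\ \|h(x)\|<\rho\}$ a bounded symmetric neighbourhood of $0$ in $E_m$. Its boundary therefore has genus $m$ and is mapped by $h$ into $\partial B_\rho$. After removing $Y$, its image still has genus at least $j$. For $j>k_0$ it must therefore meet $W$; otherwise the projection onto $E_{k_0}$ along $W$ would bound the genus by $k_0$. This gives $c_j\ge\alpha$.

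If $\bar\varepsilon<c-f(0)$, the odd deformation fixes $\{f\le f(0)\}\supset\partial B_{R_m}\cap E_m$. Hence $\eta(\cdot,1)\circ h\in G_m$, and each $\Gamma_j$ is invariant. With these classes your second and third steps go through as you describe. In the third step you use the excision property: $\overline{B\setminus Z}\in\Gamma_j$ whenever $B\in\Gamma_{j+n}$ and $Z$ is closed, symmetric, with $0\notin Z$ and $\gamma(Z)\le n$.
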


\begin{proposition}
Assume that \eqref{a.1} and \eqref{g.1} hold.
The functional $\mathcal J$ is continuous in $W_0^{1,p}(\Omega)$.
Furthermore,
\begin{equation}\label{dis weak slope}
  |d\mathcal J|(\boldsymbol u)\ge\sup_{\substack{\boldsymbol v\in C_c^\infty(\Omega)\\\|\boldsymbol v\|_{W_0^{1,p}}\le1}}\langle \mathcal J'(\boldsymbol u),\boldsymbol v\rangle,
\end{equation}
where
\[
    \langle \mathcal J'(\boldsymbol u),\boldsymbol v\rangle=
    \int_\Omega A(x,\boldsymbol u)|D\boldsymbol u|^{p-2}D\boldsymbol u\cdot D\boldsymbol v+\frac1p\int_\Omega \left(\nabla_{\boldsymbol s}A(x,\boldsymbol u)\cdot \boldsymbol v\right)|D\boldsymbol u|^p-\int_\Omega\boldsymbol g(x,\boldsymbol u)\cdot\boldsymbol v.
\]
\end{proposition}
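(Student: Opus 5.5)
Continuity of $\mathcal J$ follows by treating its two terms separately.

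For the $G$-part, \eqref{g.1} gives $|G(x,\boldsymbol s)|\le a(x)|\boldsymbol s|+\frac bq|\boldsymbol s|^q$. Since $a\in L^r$ and $r'\le p^*$ by the restriction on $r$, and since $q<p^*$, the Nemytskii operator $\boldsymbol u\mapsto G(x,\boldsymbol u)$ is continuous from $W_0^{1,p}$ into $L^1$. This uses the Sobolev embedding and Krasnosel'skii's theorem. For the principal part, take $\boldsymbol u_n\to\boldsymbol u$ in $W_0^{1,p}$ and pass to a subsequence with $\boldsymbol u_n\to\boldsymbol u$ a.e. and $|D\boldsymbol u_n|^p\le h\in L^1$. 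Because $A$ is Carathéodory and bounded by \eqref{a.1}, dominated convergence gives $\int A(x,\boldsymbol u_n)|D\boldsymbol u_n|^p\to\int A(x,\boldsymbol u)|D\boldsymbol u|^p$. Applying this to every subsequence yields continuity along the whole sequence.

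For \eqref{dis weak slope}, fix $\boldsymbol v\in C_c^\infty(\Omega)$ with $\|\boldsymbol v\|=1$; the inequality is trivial if the supremum is $\le 0$. By Theorem~\ref{theorem 1}, it suffices to prove
\[
\limsup_{\boldsymbol w\to\boldsymbol u}\overline D_+\mathcal J(\boldsymbol w)[-\boldsymbol v]\le-\langle\mathcal J'(\boldsymbol u),\boldsymbol v\rangle .
\]
Then $|d\mathcal J|(\boldsymbol u)\ge\langle\mathcal J'(\boldsymbol u),\boldsymbol v\rangle$, and taking the supremum over $\boldsymbol v$ gives the claim.

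The first step is to compute the directional derivative at any $\boldsymbol w\in W_0^{1,p}$. Because $\boldsymbol v\in L^\infty$ with $D\boldsymbol v\in L^\infty$, the map $t\mapsto\mathcal J(\boldsymbol w+t\boldsymbol v)$ is differentiable. Differentiating under the integral is justified by the mean value theorem together with the bounds $|\nabla_{\boldsymbol s}A|\le C_0$ and $|\boldsymbol v|\le\|\boldsymbol v\|_\infty$, which give the domination $C(|D\boldsymbol w|+|D\boldsymbol v|)^p$; the $G$-term is handled by \eqref{g.1}. The result is $\overline D_+\mathcal J(\boldsymbol w)[-\boldsymbol v]=-\langle\mathcal J'(\boldsymbol w),\boldsymbol v\rangle$, with $\mathcal J'$ given by the displayed formula.

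The second step, which I expect to be the only delicate one, is to show that $\boldsymbol w\mapsto\langle\mathcal J'(\boldsymbol w),\boldsymbol v\rangle$ is continuous on $W_0^{1,p}$ for fixed smooth $\boldsymbol v$. Again take $\boldsymbol w_n\to\boldsymbol w$ with a.e. convergence and an $L^1$ dominant $h$ for $|D\boldsymbol w_n|^p$, and treat the three terms as follows.
\begin{itemize}
\item For $A(x,\boldsymbol w_n)|D\boldsymbol w_n|^{p-2}D\boldsymbol w_n\cdot D\boldsymbol v$, the integrand is dominated by $C_0h^{(p-1)/p}|D\boldsymbol v|\in L^1$.
\item For $(\nabla_{\boldsymbol s}A(x,\boldsymbol w_n)\cdot\boldsymbol v)|D\boldsymbol w_n|^p$, the integrand is dominated by $C_0\|\boldsymbol v\|_\infty h$. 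This is exactly where the $L^\infty$ bound on $\boldsymbol v$ and the boundedness of $\nabla_{\boldsymbol s}A$ are essential.
\item For $\boldsymbol g(x,\boldsymbol w_n)\cdot\boldsymbol v$, continuity follows from the continuity of the Nemytskii operator into $L^{r}+L^{q'}$, combined with the growth bound \eqref{g.1}.
\end{itemize}
Dominated convergence plus the subsequence argument then gives continuity. Hence $\limsup_{\boldsymbol w\to\boldsymbol u}\overline D_+\mathcal J(\boldsymbol w)[-\boldsymbol v]=-\langle\mathcal J'(\boldsymbol u),\boldsymbol v\rangle$, which concludes the proof.
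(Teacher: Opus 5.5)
Your proof is correct and follows the paper's route. You compute the G\^ateaux derivative of $\mathcal J$ along $\boldsymbol v\in C_c^\infty(\Omega)$, prove that $\boldsymbol w\mapsto\langle\mathcal J'(\boldsymbol w),\boldsymbol v\rangle$ is continuous, and apply Theorem~\ref{theorem 1} in the direction $-\boldsymbol v$, supplying the dominated-convergence details that the paper calls standard.

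One caveat is shared with the paper's hypotheses: in the borderline case $N=p$, $r=1$, the space $W_0^{1,N}(\Omega)$ does not embed into $L^\infty(\Omega)$. There, your step ``$r'\le p^*$'' does not give $a(x)|\boldsymbol u|\in L^1(\Omega)$, so even the finiteness of $\int_\Omega G(x,\boldsymbol u)$ requires $r>1$.
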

\begin{proof} 
It is standard to verify that $\mathcal J$ is continuous and that 
 for every $\boldsymbol u\in W_0^{1,p}(\Omega)$ and for every $\boldsymbol v\in W_0^{1,p}(\Omega)\cap L^\infty(\Omega)$,
    \begin{align*}
    \langle \mathcal J'(\boldsymbol u),\boldsymbol v\rangle
    &=\lim_{t\to0}\frac{\mathcal J(\boldsymbol u+t\boldsymbol v)-\mathcal J(\boldsymbol u)}{t}\\
    &=
    \int_\Omega A(x,\boldsymbol u)|D\boldsymbol u|^{p-2}D\boldsymbol u\cdot D\boldsymbol v+\frac1p\int_\Omega \left(\nabla_{\boldsymbol s}A(x,\boldsymbol u)\cdot \boldsymbol v\right)|D\boldsymbol u|^p-\int_\Omega\boldsymbol g(x,\boldsymbol u)\cdot\boldsymbol v.
    \end{align*}

 Notice that for every $\boldsymbol v\in C_c^\infty(\Omega)$, the map
\[
W_0^{1,p}(\Omega)\ni\boldsymbol u\mapsto\langle\mathcal J'(\boldsymbol u),\boldsymbol v\rangle\quad\text{is continuous.}
\]
Thus, according to Theorem \ref{theorem 1}, we have that
\[
\|\boldsymbol v\|_{W_0^{1,p}}\cdot|d\mathcal J|(\boldsymbol u)\ge-\langle\mathcal J'(\boldsymbol u),\boldsymbol v\rangle=\langle\mathcal J'(\boldsymbol u),-\boldsymbol v\rangle,
\]
for every $\boldsymbol v\in C_c^\infty(\Omega)$ with $\|\boldsymbol v\|_{W_0^{1,p}}\le1$. In particular,
\[
|d\mathcal J(\boldsymbol u)|\ge\sup_{\substack{\boldsymbol v\in C_c^\infty(\Omega)\\\|\boldsymbol v\|_{W_0^{1,p}}\le1}}\langle\mathcal J'(\boldsymbol u),\boldsymbol v\rangle.\qedhere
\]
\end{proof}
\begin{definition}
    Let $\{\boldsymbol u_n\}\subset W_0^{1,p}(\Omega)$ be a sequence. We say that $\{\boldsymbol u_n\}$ is a Concrete Palais-Smale sequence at level $c\in\R$ if $\mathcal J(\boldsymbol u_n)\to c$ and
    \[
  \boldsymbol  \omega_n:=-\boldsymbol{div}(A(x,\boldsymbol u_n)|D\boldsymbol u_n|^{p-2}D\boldsymbol u_n)+\frac1p\nabla_{\boldsymbol s}A(x,\boldsymbol u_n)|D\boldsymbol u_n|^p-\boldsymbol g(x,\boldsymbol u_n)\to0\quad \text{in $W^{-1,p'}(\Omega)$.}
    \]
    The functional $\mathcal J$ satisfies the Concrete Palais-Smale condition at level $c\in\R$ if every Concrete Palais-Smale sequence at level $c\in\R$ admits a convergent subsequence in $W_0^{1,p}(\Omega)$.
\end{definition}

\begin{proposition}\label{PS e CPS}
Let $c\in\R$ be a real number.
The following facts hold:
\begin{enumerate}
\item[$(i)$] if $\boldsymbol u$ is a (lower) critical point for $\mathcal J$, then $u$ is a weak solution of the problem \eqref{P},
\item[$(ii)$] each $(PS)_c$-sequence is also a $(CPS)_c$-sequence,
\item[$(iii)$] $\mathcal J$ satisfies $(CPS)_c$-condition$\implies \mathcal J$ satisfies $(PS)_c$-condition.
\end{enumerate}
\end{proposition}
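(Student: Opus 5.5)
The whole statement rests on one inequality: for every $\boldsymbol u\in W_0^{1,p}(\Omega)$,
\[
\|\boldsymbol\omega(\boldsymbol u)\|_{W^{-1,p'}}\le|d\mathcal J|(\boldsymbol u),
\]
where $\boldsymbol\omega(\boldsymbol u):=-\textbf{div}(A(x,\boldsymbol u)|D\boldsymbol u|^{p-2}D\boldsymbol u)+\frac1p\nabla_{\boldsymbol s}A(x,\boldsymbol u)|D\boldsymbol u|^p-\boldsymbol g(x,\boldsymbol u)$ is regarded a priori only as a distribution. I plan to prove this first and then read off $(i)$--$(iii)$.

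Fix $\boldsymbol u$ and assume $|d\mathcal J|(\boldsymbol u)<+\infty$, since otherwise there is nothing to prove. For $\boldsymbol v\in C_c^\infty(\Omega)$ we have $\langle\boldsymbol\omega(\boldsymbol u),\boldsymbol v\rangle=\langle\mathcal J'(\boldsymbol u),\boldsymbol v\rangle$. Applying \eqref{dis weak slope} to $\boldsymbol v$ and to $-\boldsymbol v$ gives
\[
|\langle\boldsymbol\omega(\boldsymbol u),\boldsymbol v\rangle|\le|d\mathcal J|(\boldsymbol u)\,\|\boldsymbol v\|.
\]
So the linear functional $\boldsymbol v\mapsto\langle\boldsymbol\omega(\boldsymbol u),\boldsymbol v\rangle$ is bounded on $C_c^\infty(\Omega)$ for the $W_0^{1,p}$-norm. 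Since $C_c^\infty(\Omega)$ is dense in $W_0^{1,p}(\Omega)$, it extends uniquely by continuity. Hence $\boldsymbol\omega(\boldsymbol u)\in W^{-1,p'}(\Omega)$ with the norm bound above.

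With this in hand, the three items follow quickly.
\begin{enumerate}
\item[$(i)$] If $|d\mathcal J|(\boldsymbol u)=0$, then $\boldsymbol\omega(\boldsymbol u)=0$. That is exactly the weak formulation of \eqref{P} for test functions in $C_c^\infty(\Omega)$.
\item[$(ii)$] If $\{\boldsymbol u_n\}$ is a $(PS)_c$-sequence, then $\mathcal J(\boldsymbol u_n)\to c$ and $\|\boldsymbol\omega_n\|_{W^{-1,p'}}\le|d\mathcal J|(\boldsymbol u_n)\to0$. So $\{\boldsymbol u_n\}$ is a $(CPS)_c$-sequence.
\item[$(iii)$] Take any $(PS)_c$-sequence. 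By $(ii)$ it is a $(CPS)_c$-sequence, so by hypothesis it has a subsequence converging in $W_0^{1,p}(\Omega)$. This is the $(PS)_c$-condition.
\end{enumerate}

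The only delicate point is the extension step. I must use the two-sided bound obtained from $\pm\boldsymbol v$, and I must only test with $C_c^\infty$ functions. On that class all three integrals in $\langle\mathcal J'(\boldsymbol u),\boldsymbol v\rangle$ are finite: the term $\nabla_{\boldsymbol s}A\cdot\boldsymbol v\,|D\boldsymbol u|^p$ is in $L^1$ by \eqref{a.1} because $\boldsymbol v\in L^\infty$, and the $\boldsymbol g$ term is finite by \eqref{g.1}. The remaining question is whether $\boldsymbol\omega_n\to0$ in $W^{-1,p'}$ is meant for this continuous extension of the distribution. That is automatic, since $\boldsymbol\omega_n$ is defined as a distribution, lies in $W^{-1,p'}$ by the bound, and its dual norm is computed by density on $C_c^\infty(\Omega)$.
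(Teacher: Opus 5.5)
Your proposal is correct and follows essentially the same route as the paper: you apply \eqref{dis weak slope} to $\pm\boldsymbol v$ with $\boldsymbol v\in C_c^\infty(\Omega)$, $\|\boldsymbol v\|\le 1$, and read off $(i)$--$(iii)$. Your explicit estimate $\|\boldsymbol\omega(\boldsymbol u)\|_{W^{-1,p'}}\le|d\mathcal J|(\boldsymbol u)$, obtained by density, makes precise the uniformity in $\boldsymbol v$ that the paper uses implicitly in $(ii)$ when it passes from ``$\langle\mathcal J'(\boldsymbol u_n),\boldsymbol v\rangle=o(1)$'' to $\boldsymbol\omega_n\to0$ in $W^{-1,p'}(\Omega)$.
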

\begin{proof}   
$(i)$\ Let  $\boldsymbol u \in W_0^{1,p}(\Omega) $
 be a (lower) critical point of  $\mathcal J.$
From \eqref{dis weak slope} we have
\[
\langle \mathcal J'(\boldsymbol u), \boldsymbol v \rangle 
\le |d\mathcal J|(\boldsymbol u) = 0,
\qquad 
\text{for every } \boldsymbol v \in C_c^\infty(\Omega)
\text{ with } \|\boldsymbol v\|_{W_0^{1,p}} \le 1.
\]
Since we may replace $\boldsymbol v$ by $-\boldsymbol v$, it follows that
\[
\langle \mathcal J'(\boldsymbol u), \boldsymbol v \rangle = 0.
\]
This proves the claim.

$(ii)$ Let $\{\boldsymbol u_n\}\subset W_0^{1,p}(\Omega)$ be a Palais-Smale sequence for $\mathcal J$. Arguing as in $(i)$, we deduce that
\[
\langle \mathcal J'(\boldsymbol u_n), \boldsymbol v \rangle 
= o(1),
\qquad 
\text{for every } \boldsymbol v \in C_c^\infty(\Omega)
\text{ with } \|\boldsymbol v\|_{W_0^{1,p}} \le 1,
\]
where $o(1) \to 0$ as $n \to +\infty$. Thus,
\[
\boldsymbol \omega_n 
:= -\boldsymbol{\operatorname{div}}\!\left(
A(x,\boldsymbol u_n)|D\boldsymbol u_n|^{p-2}D\boldsymbol u_n
\right)
+ \frac{1}{p}\nabla_{\boldsymbol s}A(x,\boldsymbol u_n)|D\boldsymbol u_n|^p
- \boldsymbol g(x,\boldsymbol u_n)
\to 0
\quad \text{in } W^{-1,p'}(\Omega),
\]
and $\{\boldsymbol u_n\}$ is a Concrete Palais-Smale sequence.

$(iii)$ Let $\{\boldsymbol u_n\} \subset W_0^{1,p}(\Omega)$ 
be a $(PS)_c$-sequence for $\mathcal J$. 
By $(ii)$, $\{\boldsymbol u_n\}$ is also a $(CPS)_c$-sequence for $\mathcal J$. 
Since $\mathcal J$ satisfies the $(CPS)_c$-condition, 
there exists a subsequence $\{\boldsymbol u_{n_k}\} \subset \{\boldsymbol u_n\}$ 
such that $\boldsymbol u_{n_k}$ converges strongly in $W_0^{1,p}(\Omega)$. 
Hence, the $(PS)_c$-condition holds.
\end{proof}

\begin{theorem}\label{brezis browder}
    Assume that \eqref{a.1} holds.
    Let $\boldsymbol u\in W_0^{1,p}(\Omega)$ and $\boldsymbol \omega\in W^{-1,p'}(\Omega)$ such that
    \begin{equation}\label{eq thm density}
    -\textbf{div}(A(x,\boldsymbol u)|D\boldsymbol u|^{p-2}D\boldsymbol u)+\frac1p\nabla_{\boldsymbol s}A(x,\boldsymbol u)|D\boldsymbol u|^p=\boldsymbol \omega\qquad\text{in $\mathcal D'(\Omega)$}.
    \end{equation}
    Suppose that there exists $\boldsymbol v\in W_0^{1,p}(\Omega)$ such that 
    \[
    \min\left\{\nabla_{\boldsymbol s}A(x,\boldsymbol u)\cdot\boldsymbol v,0\right\}\in L^1(\Omega).
    \]
    Then 
    \[
    \int_\Omega A(x,\boldsymbol u)|D\boldsymbol u|^{p-2}D\boldsymbol u\cdot D\boldsymbol v+\frac1p\int_\Omega\left(
    \nabla_{\boldsymbol s}A(x,\boldsymbol u)\cdot\boldsymbol v
    \right)|D\boldsymbol u|^p=\langle\boldsymbol\omega,\boldsymbol v\rangle.
    \]
\end{theorem}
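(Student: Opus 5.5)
We prove this Brezis--Browder type statement by approximating $\boldsymbol v$ with bounded, compactly supported test functions and passing to the limit term by term. The gradient term passes by strong convergence, and the term $\frac1p(\nabla_{\boldsymbol s}A\cdot\boldsymbol v)|D\boldsymbol u|^p$ passes by splitting it into a positive and a negative part.

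\emph{Step 1 (bounded test functions).} By density of $C_c^\infty(\Omega)$ in $W_0^{1,p}(\Omega)\cap L^\infty(\Omega)$, the identity holds for every $\boldsymbol w\in W_0^{1,p}(\Omega)\cap L^\infty(\Omega)$ with compact support in $\Omega$. Indeed, mollify $\boldsymbol w$ to get $\boldsymbol w_\varepsilon\to\boldsymbol w$ in $W^{1,p}$, a.e., and uniformly bounded. The left-hand side then passes to the limit: the first term by $A|D\boldsymbol u|^{p-1}\in L^{p'}$ (using \eqref{a.1}), the second by dominated convergence since $|\nabla_{\boldsymbol s}A|\le C_0$ and $|D\boldsymbol u|^p\in L^1$. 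The right-hand side passes since $\boldsymbol\omega\in W^{-1,p'}$. Removing the compact support requirement uses a further density argument in $W_0^{1,p}\cap L^\infty$ with uniform $L^\infty$ bounds; this is standard.

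\emph{Step 2 (truncation adapted to the sign).} The key is to build bounded approximants $\boldsymbol v_k\to\boldsymbol v$ in $W_0^{1,p}$ such that $(\nabla_{\boldsymbol s}A(x,\boldsymbol u)\cdot\boldsymbol v_k)$ is controlled from below by the integrable function $\min\{\nabla_{\boldsymbol s}A(x,\boldsymbol u)\cdot\boldsymbol v,0\}$ and converges a.e. to $\nabla_{\boldsymbol s}A(x,\boldsymbol u)\cdot\boldsymbol v$. The natural choice is $\boldsymbol v_k=\vartheta_k\boldsymbol v$ with a scalar factor $0\le\vartheta_k\le1$, since then
\[
\nabla_{\boldsymbol s}A(x,\boldsymbol u)\cdot\boldsymbol v_k=\vartheta_k\,\nabla_{\boldsymbol s}A(x,\boldsymbol u)\cdot\boldsymbol v\ge\min\{\nabla_{\boldsymbol s}A(x,\boldsymbol u)\cdot\boldsymbol v,0\},
\]
and the product with $|D\boldsymbol u|^p$ has negative part dominated by $|\min\{\cdot,0\}|\,|D\boldsymbol u|^p$. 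As stated, the hypothesis is $\min\{\nabla_{\boldsymbol s}A\cdot\boldsymbol v,0\}\in L^1$, but bounded times $L^1$ is not in $L^1$. So the intended reading (as in Brezis--Browder) must be that $\min\{(\nabla_{\boldsymbol s}A(x,\boldsymbol u)\cdot\boldsymbol v)|D\boldsymbol u|^p,0\}\in L^1$, which I will use. For the cutoff take $\vartheta_k=H_k(|\boldsymbol v|)$ with $H_k(t)=1$ for $t\le k$, $H_k(t)=2-t/k$ for $k\le t\le 2k$, and $0$ beyond. Then $\boldsymbol v_k\in W_0^{1,p}\cap L^\infty$, $|\boldsymbol v_k|\le2k$, and $D\boldsymbol v_k=\vartheta_kD\boldsymbol v+\boldsymbol v\otimes\nabla\vartheta_k$. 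The second piece is bounded by $2|D\boldsymbol v|\chi_{\{k\le|\boldsymbol v|\le2k\}}$, hence $\boldsymbol v_k\to\boldsymbol v$ in $W_0^{1,p}$ by dominated convergence.

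\emph{Step 3 (limit).} Apply Step 1 to $\boldsymbol v_k$. The gradient term and $\langle\boldsymbol\omega,\boldsymbol v_k\rangle$ converge by strong convergence. For the remaining term, write $f_k=(\nabla_{\boldsymbol s}A\cdot\boldsymbol v_k)|D\boldsymbol u|^p=\vartheta_k f$. The negative parts $\vartheta_k f^-$ converge in $L^1$ by dominated convergence. The positive parts satisfy $\vartheta_kf^+\uparrow f^+$ monotonically, so monotone convergence gives $\int\vartheta_kf^+\to\int f^+$. The identity then shows $\int f^+=\lim(\langle\boldsymbol\omega,\boldsymbol v_k\rangle-\dots)<\infty$, so $f\in L^1$ and the identity holds for $\boldsymbol v$.

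The main obstacle is Step 1, namely justifying the identity for bounded but non-smooth test functions. Tied to it is making sure $\vartheta_k$ is nondecreasing in $k$, which holds for $H_k(t)$ as defined, so that monotone convergence applies.
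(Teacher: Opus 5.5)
Your proof is correct, but it follows a different route from the paper's. The paper builds no approximation. It sets $\boldsymbol T:=\frac1p\nabla_{\boldsymbol s}A(x,\boldsymbol u)|D\boldsymbol u|^p$ and observes from the equation that $\boldsymbol T=\boldsymbol\omega+\textbf{div}(A(x,\boldsymbol u)|D\boldsymbol u|^{p-2}D\boldsymbol u)\in W^{-1,p'}(\Omega)$, while $\boldsymbol T\in L^1(\Omega;\R^m)$ by \eqref{a.1}. It then notes $\boldsymbol T\cdot\boldsymbol v\ge\min\{\boldsymbol T\cdot\boldsymbol v,0\}\in L^1(\Omega)$ and invokes \cite[Theorem 5]{brezisbrowder2} as a black box.

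You instead reprove that theorem by hand in the first-order setting. You establish the identity for bounded test functions by density, use the radial truncation $\boldsymbol v_k=H_k(|\boldsymbol v|)\boldsymbol v\to\boldsymbol v$ in $W_0^{1,p}(\Omega)$, and apply dominated convergence to $\vartheta_kf^-$ and monotone convergence to $\vartheta_kf^+$.

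The paper's route is shorter and rests on a result that also covers higher-order spaces, where truncations are unavailable. Yours is elementary and, more importantly, handles the vectorial structure explicitly. Brezis--Browder's theorem concerns a scalar $T$ and a scalar $u$ with $Tu$ bounded below. Here only the scalar product $\boldsymbol T\cdot\boldsymbol v$ is controlled from below, not each $T_iv_i$. Approximating each component separately would give different factors $\lambda_{k,i}\in[0,1]$ and lose the lower bound on $\sum_i\lambda_{k,i}T_iv_i$. Your common scalar factor $\vartheta_k$ is exactly what makes the vector version work, so your argument supplies the step that the paper's citation leaves implicit.

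Both proofs rely on the same corrected hypothesis, $\min\{(\nabla_{\boldsymbol s}A(x,\boldsymbol u)\cdot\boldsymbol v)|D\boldsymbol u|^p,0\}\in L^1(\Omega)$. The paper's proof uses $\min\{\boldsymbol T\cdot\boldsymbol v,0\}\in L^1(\Omega)$ without comment.

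Two small corrections. First, your stated reason for reinterpreting the hypothesis is wrong as written: a bounded function times an $L^1$ function is integrable. The real point is that the printed hypothesis holds automatically, since $|\nabla_{\boldsymbol s}A|\le C_0$ and $\boldsymbol v\in L^p(\Omega)$, whereas an $L^p$ function times $|D\boldsymbol u|^p\in L^1(\Omega)$ need not be integrable. Second, monotonicity of $\vartheta_k$ in $k$ is not needed. Fatou's lemma already gives $\int_\Omega f^+<\infty$ from the convergence of $\int_\Omega\vartheta_kf$, and then $0\le\vartheta_kf^+\le f^+$ allows dominated convergence.
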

\begin{proof}
We define $\boldsymbol T=(T_1,\dots,T_m):C_c^\infty(\Omega)\to\R$ as follows
\[
\langle \boldsymbol T,\boldsymbol v\rangle=\frac1p\int_\Omega\left( \nabla_{\boldsymbol s}A(x,\boldsymbol u)\cdot\boldsymbol v\right)|D\boldsymbol u|^p=\frac1p\sum_{i=1}^m\int_\Omega\left( D_{s_i}A(x,\boldsymbol u)v_i\right)|D\boldsymbol u|^p.
\]
Since $\boldsymbol u\in W_0^{1,p}(\Omega)$ solves \eqref{eq thm density}, we have that $\boldsymbol T\in W^{-1,p'}(\Omega)\cap L^1(\Omega;\R^m)$, where
\[
\boldsymbol T(x)=\frac1p\nabla_{\boldsymbol s}A(x,\boldsymbol u)|D\boldsymbol u|^p
\] 
is the representative in $L^1(\Omega;\R^m)$.
Furthermore, we have that \[
\boldsymbol T(x)\cdot\boldsymbol v(x)\ge\min\left\{\boldsymbol T(x)\cdot\boldsymbol v(x),0\right\}\in L^1(\Omega)\qquad \text{for a.e. $x\in\Omega$.}
\]
Thus, the thesis follows by applying \cite[Theorem 5]{brezisbrowder2}.
\end{proof}

\section{\texorpdfstring{$L^\infty$-estimates}{Linfty-estimates}}

\begin{theorem}\label{regularity}
Assume that \eqref{a.1}-\eqref{a.3} hold.   Suppose also that {$p>1$}, \eqref{g.1} holds with {$r\ge\frac Np$}, and that $\boldsymbol u\in W_0^{1,p}(\Omega)$  solves
     \begin{equation}\label{eq regularity}
        -\textbf{div}(A(x,\boldsymbol u)|D\boldsymbol u|^{p-2}D\boldsymbol u)+\frac1p\nabla_{\boldsymbol s}A(x,\boldsymbol u)|D\boldsymbol u|^p=\boldsymbol g(x,\boldsymbol u)\qquad \text{in $\mathcal D'(\Omega)$.}
    \end{equation}
    Then $\boldsymbol u\in L^\infty(\Omega)$.
\end{theorem}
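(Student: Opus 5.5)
We will run a Moser-type iteration. The test functions are nonlinear truncations of $\boldsymbol u$, and they are admissible by Theorem~\ref{brezis browder}. The key structural fact is \eqref{a.3}: for a vector test function of the form $\boldsymbol v=\varphi(|\boldsymbol u|)\,\boldsymbol u$ with $\varphi\ge0$, the lower-order term
\[
\frac1p\bigl(\nabla_{\boldsymbol s}A(x,\boldsymbol u)\cdot\boldsymbol v\bigr)|D\boldsymbol u|^p=\frac1p\varphi(|\boldsymbol u|)\bigl(\nabla_{\boldsymbol s}A(x,\boldsymbol u)\cdot\boldsymbol u\bigr)|D\boldsymbol u|^p
\]
is nonnegative. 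Its negative part is therefore $0\in L^1(\Omega)$, and Theorem~\ref{brezis browder} applies with $\boldsymbol\omega=\boldsymbol g(x,\boldsymbol u)\in W^{-1,p'}(\Omega)$ by \eqref{g.1}. We may then simply drop this term from the resulting identity. This is why no sign condition beyond \eqref{a.3} is needed, and \eqref{psi ipotesi} plays no role here.

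Concretely, for $k>0$ and $\beta\ge0$ I would take $\varphi(t)=\min\{t,k\}^{p\beta}$. Then $\boldsymbol v=\min\{|\boldsymbol u|,k\}^{p\beta}\boldsymbol u\in W_0^{1,p}(\Omega)$, since $\varphi$ is bounded and Lipschitz on bounded sets. One computes
\[
D\boldsymbol v=\varphi(|\boldsymbol u|)D\boldsymbol u+\varphi'(|\boldsymbol u|)\,\boldsymbol u\otimes\nabla|\boldsymbol u|.
\]
Using $D\boldsymbol u\cdot(\boldsymbol u\otimes\nabla|\boldsymbol u|)=|\boldsymbol u|\,|\nabla|\boldsymbol u||^2\ge0$, $|D\boldsymbol u|\ge|\nabla|\boldsymbol u||$ and \eqref{a.2}, the principal part is bounded below by
\[
\nu_0\int_\Omega\varphi(|\boldsymbol u|)|D\boldsymbol u|^p+\nu_0\int_\Omega\varphi'(|\boldsymbol u|)|\boldsymbol u|\,|\nabla|\boldsymbol u||^p,
\]
where $|\nabla|\boldsymbol u||^p$ is used in the second term, which is legitimate because $|D\boldsymbol u|^{p-2}\ge|\nabla|\boldsymbol u||^{p-2}$ when $p\ge2$. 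For $p<2$ this pointwise step needs care, and I would check it or keep only the first term. Setting $w=|\boldsymbol u|\min\{|\boldsymbol u|,k\}^{\beta}$, this lower bound dominates $c(\beta+1)^{-p}\int_\Omega|\nabla w|^p$, in the standard way.

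The right-hand side is bounded by $\int_\Omega(a+b|\boldsymbol u|^{q-1})|\boldsymbol u|\min\{|\boldsymbol u|,k\}^{p\beta}$, and we write this as $\int_\Omega V(x)\,(1+w^p)$ plus lower-order terms, with $V=a+b|\boldsymbol u|^{q-p}$ wherever $|\boldsymbol u|\ge1$. Here $a\in L^{r}$ with $r\ge N/p$, and $|\boldsymbol u|^{q-p}\in L^{p^*/(q-p)}$ with $p^*/(q-p)>N/p$ because $q<p^*$.

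The main obstacle is the borderline case $r=N/p$. There $V\in L^{N/p}$ only, and the Sobolev inequality gives $\int_\Omega Vw^p\le |V|_{N/p}\,S\,|\nabla w|_p^p$ with no smallness. I would handle it in the Brezis--Kato/Trudinger way. Split $V=V\chi_{\{V>M\}}+V\chi_{\{V\le M\}}$ and choose $M$, depending on $\beta$, so that $|V\chi_{\{V>M\}}|_{N/p}$ is small compared with $\nu_0(\beta+1)^{-p}/S$. That piece is then absorbed into the left-hand side, and the remaining piece is bounded by $M\int_\Omega w^p$.

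This gives, for each fixed $\beta$, a bound on $|w|_{p^*}$ in terms of $|\boldsymbol u|_{p(\beta+1)}$. Letting $k\to\infty$ by Fatou, we get the bootstrap $\boldsymbol u\in L^{p(\beta+1)}\Rightarrow\boldsymbol u\in L^{p^*(\beta+1)}$, and hence $\boldsymbol u\in L^t(\Omega)$ for every $t<\infty$. Once this holds, $V\in L^{\tilde r}$ for some $\tilde r>N/p$ (when $r>N/p$, or after improving $a$'s contribution through the $L^t$ bounds when $r=N/p$). Standard Moser iteration with $\beta_{j+1}+1=(\beta_j+1)\,\frac{p^*}{p}\,\frac{\tilde r-1}{\tilde r}$ and constants growing polynomially in $\beta$ then yields $\sup|\boldsymbol u|<\infty$. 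Alternatively, one can run Stampacchia's truncation with $\boldsymbol v=(|\boldsymbol u|-k)^+\boldsymbol u/|\boldsymbol u|$, which is again admissible by \eqref{a.3}.

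In the case $r=N/p$ exactly with $a\notin L^{\tilde r}$, the final step would use the fact that $a$ enters linearly. I would treat it through the term $a|\boldsymbol u|\varphi$ together with the Stampacchia approach, which only requires $a\in L^{r}$, $r>N/p$. If that fails at the endpoint, I would check whether the statement implicitly uses $r>N/p$ there.
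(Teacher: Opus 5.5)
For $r>N/p$ your argument is correct, but it is organized differently from the paper's. The paper never introduces a potential. It keeps $b|\boldsymbol u|^{q-1}|\boldsymbol\varphi|\le b|\boldsymbol u|^{2s+q}$ on the right-hand side and pays for it through the shifted exponents $\sigma_{n+1}=\frac{p^*}{p}(\sigma_n-(q-p))$, which still grow geometrically because $q<p^*$. It handles $a$ by cutting at a level $K$ and absorbing $\bigl(\int_{\{a\ge K\}}a^{N/p}\bigr)^{p/N}|\boldsymbol\psi|_{p^*}^p$ into the left-hand side. You instead put $b|\boldsymbol u|^{q-p}$ into $V$ and use H\"older with $|V|_{\tilde r}$, $\tilde r>N/p$. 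This makes the polynomial dependence of the constants on $\beta$ automatic. For $r>N/p$ your Brezis--Kato stage is not even needed, since $V\in L^{\min\{r,p^*/(q-p)\}}$ with exponent $>N/p$ from the outset. Your concern about $p<2$ is also unnecessary. The term containing $\varphi'$ is nonnegative for every $p>1$ and can simply be dropped, because pointwise $|\nabla w|^p\le(\beta+1)^p\min\{|\boldsymbol u|,k\}^{p\beta}|D\boldsymbol u|^p$.

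At $r=N/p$ your proposal does not close, and the repair you sketch is not available. The function $a$ is a fixed datum in $L^{N/p}$, so $L^t$ bounds on $\boldsymbol u$ do not improve its integrability, and Stampacchia's method also needs $r>N/p$. No argument can close this case, because the statement is false there.

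Take $m=1$, $p=2$, $N\ge3$, $A\equiv1$ and $\Omega=B_1$. Let $u$ be radial, smooth and positive in $B_1\setminus\{0\}$, with $u(x)=\log\log(1/|x|)$ for $|x|<\rho$ (where $\rho<e^{-1}$) and $u(x)=c(|x|^{2-N}-1)$ for $\frac12\le|x|\le1$. For $|x|<\rho$,
\[
f:=-\Delta u=\frac{N-2+1/\log(1/|x|)}{|x|^2\log(1/|x|)},\qquad\text{so}\qquad\int_{B_\rho}|f|^{N/2}\le C\int_0^\rho\frac{dr}{r\,(\log(1/r))^{N/2}}<\infty .
\]
Moreover $u\in W_0^{1,2}(B_1)$, and the flux $r^{N-1}|u'(r)|=r^{N-2}/\log(1/r)$ tends to $0$. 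Hence $-\Delta u=f$ in $\mathcal D'(B_1)$, with $f\in L^{N/2}(B_1)$ and $f=0$ for $|x|>\frac12$.

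Set $c_0:=\inf_{0<|x|\le1/2}u>0$ and choose $\theta\in C^\infty(\R)$ with $0\le\theta\le1$, $\theta(0)=0$ and $\theta=1$ on $[c_0,\infty)$. Then $g(x,s):=f(x)\theta(s)$, the gradient of $G(x,s)=f(x)\int_0^s\theta$, satisfies $g(x,0)=0$ and \eqref{g.1} with $a=|f|\in L^{N/2}$, $b=0$. It also satisfies $g(x,u(x))=f(x)$. So $u$ solves \eqref{eq regularity}, yet $u\notin L^\infty(B_1)$. What does hold at the endpoint is exactly what your Brezis--Kato step gives: $\boldsymbol u\in L^t(\Omega)$ for every $t<\infty$.

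The paper's proof fails at the same point. In Step~1 the level $K$ must satisfy $C_2(s)\varepsilon(K)\le\nu_0S/2$ with $C_2(s)\sim s^p$, so $K=K(s)$. Yet in Step~2 the constant $C_6$ is declared independent of $s$. For $r>N/p$ one has $\varepsilon(K)\le K^{1-rp/N}|a|_r^{rp/N}$, so $K(s)$ can be chosen polynomial in $s$ and the paper's iteration closes. For $r=N/p$ nothing controls the growth of $K(s)$, and the example above shows it cannot be controlled in general. The correct hypothesis, for your proof and for the paper's, is $r>N/p$.
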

\begin{proof}
\textbf{Step 1}. For every $s\ge0$, there exists a constant $C(s)>0$  such that 
\[
\left(\int_\Omega|\boldsymbol u|^{\frac{p^*}{p}(2s+p)}\right)^{\frac{p}{p^*}}\le C(s)(1+|\boldsymbol u|_{2s+q}^{2s+q}).
\]

Consider
    \[
    \boldsymbol \varphi=\boldsymbol u\min\{|\boldsymbol u|^{2s},L^2\},\qquad \boldsymbol\psi=\boldsymbol u\left(\min\{|\boldsymbol u|^{2s},L^2\}\right)^{\frac1p}.
    \]
    Testing \eqref{eq regularity} with $\boldsymbol\varphi$, we obtain:
    \[
    \int_\Omega A(x,\boldsymbol u)|D\boldsymbol u|^{p-2}D\boldsymbol u\cdot D\boldsymbol\varphi+\frac1p\int_\Omega\left(\nabla_{\boldsymbol s}A(x,\boldsymbol u)\cdot\boldsymbol\varphi\right)|D\boldsymbol u|^p=\int_\Omega\boldsymbol g(x,\boldsymbol u)\cdot\boldsymbol\varphi.
    \]
    From \eqref{a.3}, we deduce that
    \begin{equation}
    I:=\int_\Omega A(x,\boldsymbol u)|D\boldsymbol u|^{p-2}D\boldsymbol u\cdot D\boldsymbol\varphi\le\int_\Omega \boldsymbol g(x,\boldsymbol u)\cdot\boldsymbol\varphi.
    \end{equation}
    Moreover,
    \begin{align}
        I&=\int_\Omega A(x,\boldsymbol u)|D\boldsymbol u|^{p}\min\{|\boldsymbol u|^{2s},L^2\}+2s\int_{\{|\boldsymbol u|^s\le L\}}A(x,\boldsymbol u)|\boldsymbol u|^{2s-1}|D\boldsymbol u|^{p-2}D\boldsymbol u\cdot(\boldsymbol u\otimes\nabla|\boldsymbol u|)\\
        &=\int_\Omega A(x,\boldsymbol u)|D\boldsymbol u|^{p}\min\{|\boldsymbol u|^{2s},L^2\}+2s\int_{\{|\boldsymbol u|^s\le L\}}A(x,\boldsymbol u)|\boldsymbol u|^{2s}|D\boldsymbol u|^{p-2}|\nabla|\boldsymbol u||^2\\
        &\underbrace{\ge}_{\eqref{a.2}}\int_\Omega A(x,\boldsymbol u)|D\boldsymbol u|^p\min\{|\boldsymbol u|^{2s},L^2\},
    \end{align}
    where we use:
    \[
    D\boldsymbol u\cdot(\boldsymbol u\otimes\nabla|\boldsymbol u|)=\nabla|\boldsymbol u|\cdot\left[(D\boldsymbol u)^{\top}\boldsymbol u\right]=\frac{|(D\boldsymbol u)^{\top}\boldsymbol u|^2}{|\boldsymbol u|}=|\boldsymbol u|\cdot|\nabla|\boldsymbol u||^2.
    \]
    Now, we treat the quantity
    \[
    \int_\Omega A(x,\boldsymbol u)|D\boldsymbol \psi|^{p}.
    \]
    Since
    \[
    D\boldsymbol\psi=
    \begin{cases}
        |\boldsymbol u|^{\frac{2s}{p}}D\boldsymbol u+\frac{2s}{p}|\boldsymbol u|^{\frac{2s}{p}-1}\boldsymbol u\otimes\nabla|\boldsymbol u|&\text{if }|\boldsymbol u|^s\le L\\
        L^{\frac2p}D\boldsymbol u&\text{if }|\boldsymbol u|^s\ge L,
    \end{cases}
    \]
    we have that
    \[
|D\boldsymbol\psi|^p=\left(D\boldsymbol\psi\cdot D\boldsymbol\psi\right)^{\frac{p}{2}}=
\begin{cases}
|\boldsymbol u|^{2s}
\left(
|D\boldsymbol u|^2
+\dfrac{4s}{p}\left(1+\dfrac{s}{p}\right)
|\nabla|\boldsymbol u||^2
\right)^{\frac{p}{2}}
& \text{if } |\boldsymbol u|^s\le L, \\[10pt]
L^{2}\,|D\boldsymbol u|^p
& \text{if } |\boldsymbol u|^s\ge L.
\end{cases}
\]
Hence,
\begin{align}
    \int_\Omega A(x,\boldsymbol u)|D\boldsymbol \psi|^p&\le \int_{\{|\boldsymbol u|^s\le L\}}A(x,\boldsymbol u)|\boldsymbol u|^{2s}
\left(
|D\boldsymbol u|^2
+\dfrac{4s}{p}\left(1+\dfrac{s}{p}\right)
|\nabla|\boldsymbol u||^2
\right)^{\frac{p}{2}}\\
&\quad+\int_{\{|\boldsymbol u|^s\ge L\}}A(x,\boldsymbol u)L^2|D\boldsymbol u|^p\le C_1\int_{\{|\boldsymbol u|^s\le L\}}A(x,\boldsymbol u)|\boldsymbol u|^{2s}|D\boldsymbol u|^p\\
&\quad+\int_{\{|\boldsymbol u|^s\ge L\}}A(x,\boldsymbol u)|D\boldsymbol u|^p\min\{|\boldsymbol u|^{2s},L^2\}\\
&\le C_2\int_\Omega A(x,\boldsymbol u)|D\boldsymbol u|^p\min\{|\boldsymbol u|^{2s},L^2\},
\end{align}
where
\[C_1=C_1(s)=\kappa_p\left[1+\dfrac{4s}{p}\left(1+\dfrac{s}{p}\right)\right]^{\frac p2}\ge 1, \qquad \kappa_p=
\begin{cases}
2^{\frac p2-1} &\text{if } p\ge2\\
 1 &\text{if } 1<p\le2,
 \end{cases}
 \]
 and $C_2=2C_1(s)$.

By \eqref{a.2}, we get
\begin{align}
    \nu_0\int_\Omega|D\boldsymbol\psi|^p\le C_2\int_\Omega A(x,\boldsymbol u)|D\boldsymbol u|^p\min\{|\boldsymbol u|^{2s},L^2\}\le C_2I\le C_2 \int_\Omega \boldsymbol g(x,\boldsymbol u)\cdot\boldsymbol\varphi.
\end{align}
Now, \eqref{g.1} implies that

\begin{align}
    \nu_0\int_\Omega|D\boldsymbol\psi|^p&\le C_2\int_\Omega a(x)|\boldsymbol \varphi|+C_2b\int_\Omega |\boldsymbol u|^{q-1}|\boldsymbol \varphi|\\
    &\le C_2\int_\Omega a(x)|\boldsymbol u|\min\{|\boldsymbol u|^{2s},L^2\}+C_2b\int_\Omega|\boldsymbol u|^{2s+q}\\
    &\le C_2\int_{\{|\boldsymbol u|\le1\}}a(x)|\boldsymbol u|\min\{|\boldsymbol u|^{2s},L^2\}+C_2\int_\Omega a(x)|\boldsymbol u|^{p}\min\{|\boldsymbol u|^{2s},L^2\}\\
    &\quad+C_2b\int_\Omega|\boldsymbol u|^{2s+q}\le C_3+C_2\int_\Omega a(x)|\boldsymbol \psi|^p+C_4|\boldsymbol u|_{2s+q}^{2s+q},
\end{align}
where 
\[
C_3=C_2(s)|a|_1,\ \ \text{and } C_4=C_2(s)b.
\]
For every $K>0$, by H\"older's inequality, we have that
\begin{align}
    \int_\Omega a(x)|\boldsymbol \psi|^p&=\int_{\{a(x)\le K\}}a(x)|\boldsymbol\psi|^p+\int_{\{a(x)\ge K\}}a(x)|\boldsymbol\psi|^p\\
    &\le K|\boldsymbol u|_{2s+p}^{2s+p}+\left(\int_{\{a(x)\ge K\}}|a(x)|^{\frac Np}\right)^{\frac pN}|\boldsymbol \psi|_{p^*}^{p}\\
    &=K|\boldsymbol u|_{2s+p}^{2s+p}+\varepsilon(K)|\boldsymbol \psi|_{p^*}^{p},
\end{align}
where $\varepsilon(K)\to0$ as $K\to+\infty$. 

Thus, by Sobolev's embedding, there exists $S>0$ such that
\begin{align}
   \nu_0S |\boldsymbol\psi|_{p^*}^p\le\nu_0\int_\Omega|D\boldsymbol \psi|^p\le C_3+C_2K|\boldsymbol u|_{2s+p}^{2s+p}+C_2\varepsilon(K)|\boldsymbol \psi|_{p^*}^p+C_4|\boldsymbol u|_{2s+q}^{2s+q}.
\end{align}

Taking $K>0$  such that
\[
\nu_0S-C_2\varepsilon(K)\ge\frac{\nu_0S}{2},
\]
 we obtain 
\[
\frac{\nu_0S}{2}|\boldsymbol\psi|_{p^*}^p\le C_3+C_2K|\boldsymbol u|_{2s+p}^{2s+p}+C_4|\boldsymbol u|_{2s+q}^{2s+q}.
\]
Passing to the limit as $L\to+\infty$, we conclude that 
\[
\left(\int_\Omega|\boldsymbol u|^{\frac{p^*}{p}(2s+p)}\right)^{\frac {p}{p^*}}\le C_5(s)\left(1+|\boldsymbol u|_{2s+p}^{2s+p}+|\boldsymbol u|_{2s+q}^{2s+q}\right),
\]
where
\begin{align}
C_5(s)&=\frac{2}{\nu_0S}\left(C_3(s)+C_2(s)K+C_4(s)\right)\\
&\le\frac{4\kappa_p}{\nu_0 S}
\left[1+\frac{4s}{p}\left(1+\frac{s}{p}\right)\right]^{\frac p2}
\left(
|a|_{1} + K + b
\right).
\end{align}
Exploiting the embedding $L^{2s+q}(\Omega)\hookrightarrow L^{2s+p}(\Omega)$, we deduce that
\begin{align}
\left(\int_\Omega|\boldsymbol u|^{\frac{p^*}{p}(2s+p)}\right)^{\frac {p}{p^*}}&\le C_5(s)\left[1+\left(1+|\Omega|^{\frac{q-p}{2s+q}}\right)|\boldsymbol u|_{2s+q}^{2s+q}\right]\\
&\le C_5(s)\left[1+\bigl(1+\max\{1,|\Omega|\}\bigr)|\boldsymbol u|_{2s+q}^{2s+q}\right],
\end{align}
and the first step follows.

\textbf{Step 2}. A Moser-type iteration.

We define $\sigma:=2s+q$, and the following iteration:
\begin{equation}\label{iteration sigma_n}
\sigma_{n+1}=\frac{p^*}{p}\left(\sigma_n-(q-p)\right),\quad  \sigma_0=q.
\end{equation}
Then
\[
|\boldsymbol u|_{\sigma_{n+1}}\le \left\{C_6\left[1+\frac{4s}{p}\left(1+\frac{s}{p}\right)\right]^{\frac p2}\right\}^{\frac{1}{\sigma_n-(q-p)}}\left(1+|\boldsymbol u|_{\sigma_n}^{\sigma_n}\right)^{\frac{1}{\sigma_n-(q-p)}},
\]
where $C_6>0$ is a positive constant independent of $s=s_n=\frac{\sigma_n-q}{2}$.

Set
\[
\delta_n:=\sigma_n-(q-p)>0,\quad \text{and}\quad M_n:=C_6\left[1+\frac{4s_n}{p}\left(1+\frac{s_n}{p}\right)\right]^{\frac p2}.
\]
Since $1+t^{\sigma_n}\le (1+t)^{\sigma_n}$ for $t\ge0$, we obtain
\[
|\boldsymbol u|_{\sigma_{n+1}}
\le M_n^{\frac1{\delta_n}}(1+|\boldsymbol u|_{\sigma_{n}})^{\alpha_n},
\qquad
\alpha_n:=\frac{\sigma_n}{\delta_n}
=\frac{\sigma_n}{\sigma_n-(q-p)}.
\]
Let $B_n:=1+|\boldsymbol u|_{\sigma_{n}}$. Then
\[
B_{n+1}
=1+|\boldsymbol u|_{\sigma_{n+1}}
\le 1+M_n^{\frac1{\delta_n}}B_n^{\alpha_n}
\le (1+M_n^{\frac1{\delta_n}})B_n^{\alpha_n}\le(1+M_n)^{\frac{1}{\delta_n}}B_n^{\alpha_n}.
\]
Iterating this inequality, one proves by induction that for every $n\ge1$,
\begin{equation}\label{formula induttiva}
B_n
\le
\left(
\prod_{j=0}^{n-1}
(1+M_j)^{\frac1{\delta_j}\prod_{k=j+1}^{n-1}\alpha_k}
\right)
B_0^{\prod_{k=0}^{n-1}\alpha_k}.
\end{equation}
{
Indeed, if 
\[
B_{n-1}
\le
\left(
\prod_{j=0}^{n-2}
(1+M_j)^{\frac1{\delta_j}\prod_{k=j+1}^{n-2}\alpha_k}
\right)
B_0^{\prod_{k=0}^{n-2}\alpha_k}
\]
holds, we have that
\begin{align}
    B_n&\le(1+M_{n-1})^{\frac{1}{\delta_{n-1}}}B_{n-1}^{\alpha_{n-1}}\\
    &\le (1+M_{n-1})^{\frac{1}{\delta_{n-1}}}\left(
\prod_{j=0}^{n-2}
(1+M_j)^{\frac1{\delta_j}\prod_{k=j+1}^{n-2}\alpha_k}
\right)^{\alpha_{n-1}}
B_0^{\alpha_{n-1}\prod_{k=0}^{n-2}\alpha_k}\\
&=(1+M_{n-1})^{\frac{1}{\delta_{n-1}}}\left(
\prod_{j=0}^{n-2}
(1+M_j)^{\frac1{\delta_j}\prod_{k=j+1}^{n-1}\alpha_k}
\right)
B_0^{\prod_{k=0}^{n-1}\alpha_k}\\
&=\left(
\prod_{j=0}^{n-1}
(1+M_j)^{\frac1{\delta_j}\prod_{k=j+1}^{n-1}\alpha_k}
\right)
B_0^{\prod_{k=0}^{n-1}\alpha_k}.
\end{align}
Then, \eqref{formula induttiva} is proved.
}

Since $B_0=1+A_0=1+|\boldsymbol u|_{\sigma_0}$, \eqref{formula induttiva} gives
\[
|\boldsymbol u|_{\sigma_n}
\le
\left(
\prod_{j=0}^{n-1}
(1+M_j)^{\frac1{\delta_j}\prod_{k=j+1}^{n-1}\alpha_k}
\right)
(1+|\boldsymbol u|_{\sigma_0})^{\prod_{k=0}^{n-1}\alpha_k}.
\]

\textbf{Step 3}. Convergence of the iteration and $L^\infty$ bound.

Observe that
\[
\alpha_n
=
\frac{\sigma_n}{\delta_n}
=
1+\frac{q-p}{\delta_n}.
\]
From \eqref{iteration sigma_n}, one has
\begin{align}\label{iteration delta_n}
\delta_n
=
\theta^n\Bigl(p-\frac{q-p}{\theta-1}\Bigr)
+\frac{q-p}{\theta-1},
\qquad
\theta=\frac{p^*}{p}>1.
\end{align}
Since $q<p^*$, the constant
\[
C_*:=p-\frac{q-p}{\theta-1}
\]
is positive and $\delta_n\ge C_*\theta^n$. Hence
\[
\sum_{n=0}^\infty
\log\alpha_n
=
\sum_{n=0}^\infty
\log\!\left(1+\frac{q-p}{\delta_n}\right)
\le
(q-p)\sum_{n=0}^\infty \frac1{\delta_n}
\le
\frac{q-p}{C_*}\sum_{n=0}^\infty \theta^{-n}
<\infty.
\]
Therefore
\begin{equation}\label{convergence alpha_n}
\prod_{n=0}^\infty \alpha_n<\infty.
\end{equation}

Recall that
\[
M_n=C_6\left[1+\frac{4s_n}{p}\left(1+\frac{s_n}{p}\right)\right]^{\frac p2},
\qquad s_n=\frac{\sigma_n-q}{2}.
\]
Since $s_n\ge0$, there exists $C>0$ such that
\[
1+M_n\le C(1+s_n^2)^{\frac p2}.
\]
{
Thus,
\begin{align}
\log(1+M_n)&\le \log C+\frac p2\log(1+s_n^2)\\
&\le C+\frac p2\log(1+s_n)^2\\
&\le C+p\log(1+s_n).
\end{align}
Moreover, from \eqref{iteration delta_n}, one has 
\begin{align}
    \log(1+M_n)&\le C+p\log(1+s_n)\\
    &=C+p\log\left(1+\frac{\sigma_n-q}{2}\right)\\
    &=C+p\log\left(1+\frac{\delta_n-p}{2}\right)\\
    &=C+p\log\left(1+\frac{\theta^n}{2}\Bigl(p-\frac{q-p}{\theta-1}\Bigr)
-\frac12\left(p-\frac{q-p}{\theta-1}\right) \right)\\
&\le C+p\log(1+\alpha_\theta\theta^n),\qquad\alpha_\theta>0.
\end{align}

On the other hand, by \eqref{convergence alpha_n} we have:
\[
\prod_{k=n+1}^\infty \alpha_k \le C_\alpha,
\]
and we also have $\delta_n\ge C_*\theta^n$. Thus, defining
\[
\beta_n:=\frac1{\delta_n}\prod_{k=n+1}^\infty\alpha_k,
\]
we obtain
\[
\beta_n\le \frac{C_\alpha}{\delta_n}\le \frac{C_\alpha}{C_*}\,\theta^{-n}.
\]
Hence
\[
\sum_{n=0}^\infty \beta_n\log(1+M_n)
\le
\frac{C_\alpha}{C_*}(C+p)\sum_{n=0}^\infty \frac{1+\log(1+\alpha_\theta \theta^n)}{\theta^n}<\infty,
\]
which implies that
\[
\prod_{n=0}^\infty (1+M_n)^{\beta_n}<\infty.
\]

\medskip
Since $\sigma_n\to\infty$ and $|\boldsymbol u|_{\sigma_n}$ is uniformly bounded, we conclude that
\[
|\boldsymbol u|_{\infty}=\lim_{n\to\infty}|\boldsymbol u|_{\sigma_n}\le\lim_{n\to+\infty}\left(
\prod_{j=0}^{n-1}
(1+M_j)^{\frac1{\delta_j}\prod_{k=j+1}^{n-1}\alpha_k}
\right)
(1+|\boldsymbol u|_{\sigma_0})^{\prod_{k=0}^{n-1}\alpha_k}<\infty.
\]
}

\end{proof}

\begin{corollary}\label{regularity pLaplacian}
 Assume that $p>1$ and \eqref{g.1} holds with $r\ge\frac Np$.
    Let $\boldsymbol u\in W_0^{1,p}(\Omega)$ be a weak solution of
    \[
    \begin{cases}
    -\boldsymbol\Delta_p\boldsymbol u=\boldsymbol g(x,\boldsymbol u)&\text{in $\Omega$},\\
    \boldsymbol u=0&\text{on $\partial\Omega$.}
    \end{cases}
    \]
    Then $\boldsymbol u\in L^\infty(\Omega)$.
\end{corollary}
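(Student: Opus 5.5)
The corollary is the special case $A\equiv 1$ of Theorem~\ref{regularity}, so I plan to obtain it by checking the hypotheses of that theorem. With $A(x,\boldsymbol s)\equiv 1$ we have $\nabla_{\boldsymbol s}A\equiv 0$, so $A$ is trivially a $C^1$-Carathéodory function, and the structural assumptions hold as follows:
\begin{itemize}
\item \eqref{a.1} holds with $C_0=1$;
\item \eqref{a.2} holds with $\nu_0=1$;
\item \eqref{a.3} holds because $\boldsymbol s\cdot\nabla_{\boldsymbol s}A(x,\boldsymbol s)=0\ge0$.
\end{itemize}
Moreover, $p>1$ and \eqref{g.1} with $r\ge\frac Np$ are exactly the standing hypotheses of the corollary.

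Next I check that a weak solution of the vectorial $p$-Laplacian problem is a distributional solution of \eqref{eq regularity}. By definition, $\boldsymbol u\in W_0^{1,p}(\Omega)$ satisfies
\[
\int_\Omega |D\boldsymbol u|^{p-2}D\boldsymbol u\cdot D\boldsymbol v=\int_\Omega \boldsymbol g(x,\boldsymbol u)\cdot\boldsymbol v
\qquad\text{for every }\boldsymbol v\in C_c^\infty(\Omega).
\]
Since $\nabla_{\boldsymbol s}A\equiv0$, this is exactly \eqref{eq regularity} in $\mathcal D'(\Omega)$.

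The right-hand side is a genuine distribution: by \eqref{g.1} and the Sobolev embedding (recall $q<p^*$ and $a\in L^r$), $\boldsymbol g(x,\boldsymbol u)\in L^1(\Omega)$. Hence Theorem~\ref{regularity} applies and gives $\boldsymbol u\in L^\infty(\Omega)$.

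There is no real obstacle here. The only point worth noting is that the proof of Theorem~\ref{regularity} tests with the truncated functions $\boldsymbol\varphi=\boldsymbol u\min\{|\boldsymbol u|^{2s},L^2\}$, which need not be bounded. In the present case the equation has no $\nabla_{\boldsymbol s}A$ term, so that step is immediate: $\boldsymbol g(x,\boldsymbol u)\cdot\boldsymbol\varphi\in L^1$, and the identity extends from $C_c^\infty(\Omega)$ to $\boldsymbol\varphi\in W_0^{1,p}(\Omega)$ by density. This is the same justification that Theorem~\ref{brezis browder} provides in general.
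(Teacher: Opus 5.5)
Your proposal is correct relative to the paper and is exactly its (implicit) argument: the corollary is Theorem~\ref{regularity} with $A\equiv1$, for which \eqref{a.1}--\eqref{a.3} hold with $C_0=\nu_0=1$, and \eqref{eq regularity} is precisely the weak formulation of the $p$-Laplacian system. Be aware, however, that you inherit a defect of Theorem~\ref{regularity} at the endpoint $r=N/p$, where the conclusion is actually false:
\begin{itemize}
\item In Step~1 the level $K$ must be chosen depending on $s$, through $C_2(s)$. When $a$ is merely in $L^{N/p}$ there is no control on how fast $K(s)$ grows, so $C_6$ is not independent of $s$ and the iteration breaks down.
\item Counterexample: take $N=3$, $p=2$, $m=1$. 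The unbounded function $u(x)=\log\log(1/|x|)\in W_0^{1,2}(B_{1/e}(0))$ solves $-\Delta u=f$ with $f=\frac{1}{|x|^2\log(1/|x|)}+\frac{1}{|x|^2\log^2(1/|x|)}\in L^{3/2}=L^{N/p}$. Set $g(x,s)=f(|x|)\min\{s^+/u(x),1\}$. Then $g(x,0)=0$, $g(x,u)=f$, and $g$ satisfies \eqref{g.1} with $a=f$, $b=0$.
\end{itemize}
So the argument only proves the statement for $r>N/p$, where $K(s)$ grows polynomially in $s$. As written it also needs $N>p$, since Step~1 uses the embedding $W_0^{1,p}\hookrightarrow L^{p^*}$.
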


\begin{lemma}\label{lemma convergence gradient}
Assume that \eqref{a.1}-\eqref{a.2} hold.
    Let $\{\boldsymbol u_n\}\subset W_0^{1,p}(\Omega)$ and $\{\boldsymbol\omega_n\}$ be two sequences such that
    \[
    \boldsymbol\omega_n:=-\boldsymbol{div}(A(x,\boldsymbol u_n)|D\boldsymbol u_n|^{p-2}D\boldsymbol u_n)+\frac1p\nabla_{\boldsymbol s}A(x,\boldsymbol u_n)|D\boldsymbol u_n|^p.
    \]
    If $\boldsymbol u_n\wto\boldsymbol u$ in $W_0^{1,p}(\Omega)$ and $\boldsymbol\omega_n\to\boldsymbol\omega$ in $W^{-1,p'}(\Omega)$, then
    \[
    |D\boldsymbol u_n|^{p-1}\to |D\boldsymbol u|^{p-1}\quad \text{in $L^1(\Omega)$},\qquad D\boldsymbol u_n\to D\boldsymbol u\quad\text{a.e. in $\Omega$.}
    \]
\end{lemma}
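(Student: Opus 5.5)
The approach follows the Boccardo--Murat / Dal Maso--Murat scheme for a.e. convergence of gradients, adapted to the vectorial setting. The term $\frac1p\nabla_{\boldsymbol s}A(x,\boldsymbol u_n)|D\boldsymbol u_n|^p$ is only bounded in $L^1(\Omega)$, so we use truncated test functions.

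\textbf{Step 1 (reductions).} Up to a subsequence, $\boldsymbol u_n\to\boldsymbol u$ in $L^p(\Omega)$ and a.e., and $\{D\boldsymbol u_n\}$ is bounded in $L^p$. By \eqref{a.1}, $\boldsymbol h_n:=\frac1p\nabla_{\boldsymbol s}A(x,\boldsymbol u_n)|D\boldsymbol u_n|^p$ is bounded in $L^1(\Omega)$, say $|\boldsymbol h_n|_1\le C$. Hence $-\textbf{div}(A(x,\boldsymbol u_n)|D\boldsymbol u_n|^{p-2}D\boldsymbol u_n)=\boldsymbol\omega_n-\boldsymbol h_n$, where $\boldsymbol\omega_n$ converges in $W^{-1,p'}$ and $\boldsymbol h_n$ is bounded in $L^1$.

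\textbf{Step 2 (truncated test function).} For $\varepsilon>0$, let $T_\varepsilon:\R^m\to\R^m$ be the radial truncation $T_\varepsilon(\boldsymbol s)=\boldsymbol s\min\{1,\varepsilon/|\boldsymbol s|\}$. Its Jacobian is positive semidefinite, with $DT_\varepsilon=I$ on $\{|\boldsymbol s|<\varepsilon\}$. Fix a cutoff $\phi\in C_c^\infty(\Omega)$ with $0\le\phi\le1$, and test with $\boldsymbol v_n=\phi\,T_\varepsilon(\boldsymbol u_n-\boldsymbol u)\in W_0^{1,p}\cap L^\infty$, which is admissible by density. Since $|\boldsymbol v_n|\le\varepsilon$, the $L^1$ term contributes at most $C\varepsilon$. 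Moreover $\boldsymbol v_n\wto0$ in $W_0^{1,p}$, so $\langle\boldsymbol\omega_n,\boldsymbol v_n\rangle\to0$ by the strong convergence of $\boldsymbol\omega_n$. The term carrying $D\phi$ tends to $0$, since $T_\varepsilon(\boldsymbol u_n-\boldsymbol u)\to0$ in $L^p$ and $A|D\boldsymbol u_n|^{p-1}$ is bounded in $L^{p'}$. We obtain
\[
\limsup_n\int_\Omega\phi\,A(x,\boldsymbol u_n)|D\boldsymbol u_n|^{p-2}D\boldsymbol u_n\cdot DT_\varepsilon(\boldsymbol u_n-\boldsymbol u)D(\boldsymbol u_n-\boldsymbol u)\le C\varepsilon.
\]
Next, subtract the analogous term with $|D\boldsymbol u|^{p-2}D\boldsymbol u$ in place of $|D\boldsymbol u_n|^{p-2}D\boldsymbol u_n$. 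That term tends to $0$ by dominated convergence, because $A(x,\boldsymbol u_n)\to A(x,\boldsymbol u)$ a.e., $A$ is bounded, and $D(\boldsymbol u_n-\boldsymbol u)\wto0$ weakly in $L^p$. Restricting to the set $\{|\boldsymbol u_n-\boldsymbol u|<\varepsilon\}$, where $DT_\varepsilon=I$, and using \eqref{a.2}, we get
\[
\limsup_n\int_{\{|\boldsymbol u_n-\boldsymbol u|<\varepsilon\}}\phi\,e_n\le C\varepsilon/\nu_0,\qquad e_n:=\bigl(|D\boldsymbol u_n|^{p-2}D\boldsymbol u_n-|D\boldsymbol u|^{p-2}D\boldsymbol u\bigr)\cdot(D\boldsymbol u_n-D\boldsymbol u)\ge0.
\]

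\textbf{Main obstacle.} On $\{|\boldsymbol u_n-\boldsymbol u|\ge\varepsilon\}$ the radial truncation has $DT_\varepsilon\ne I$. Its Jacobian is $\frac{\varepsilon}{|\boldsymbol s|}\bigl(I-\hat{\boldsymbol s}\otimes\hat{\boldsymbol s}\bigr)$, so the integrand there need not have a sign: $|D\boldsymbol u_n|^{p-2}D\boldsymbol u_n\cdot M\,D\boldsymbol w$ is not obviously nonnegative for positive semidefinite $M$ when $p\neq2$. I would try to show that this contribution is bounded below by $-o(1)$. On that set, write $D\boldsymbol u_n=D\boldsymbol u+D\boldsymbol w$ with $\boldsymbol w=\boldsymbol u_n-\boldsymbol u$. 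The piece $|D\boldsymbol u_n|^{p-2}D\boldsymbol w\cdot M\,D\boldsymbol w$ is nonnegative. The remaining cross terms involve $D\boldsymbol u$ integrated over sets whose measure $|\{|\boldsymbol w|\ge\varepsilon\}|$ tends to $0$, and equi-integrability of $|D\boldsymbol u|^p$ together with Hölder's inequality should make them vanish. If this sign argument fails, the fallback is Landes' trick: use instead $\phi\,T_\varepsilon(\boldsymbol u_n-\boldsymbol u)$ with $\boldsymbol u$ replaced by a smooth approximation, and then let the approximation converge.

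\textbf{Conclusion.} Given the estimate in Step 2, we have $\phi\,e_n^\theta\to0$ in $L^1$ for $\theta\in(0,1)$. To see this, split $\Omega$ into $\{|\boldsymbol w|<\varepsilon\}$, where we use Hölder's inequality and the bound above, and $\{|\boldsymbol w|\ge\varepsilon\}$, whose measure tends to $0$ while $e_n$ stays bounded in $L^1$; then let $\varepsilon\to0$. Thus $e_n\to0$ a.e. along a subsequence. The strict monotonicity of $\boldsymbol\xi\mapsto|\boldsymbol\xi|^{p-2}\boldsymbol\xi$ on $\R^{m\times N}$ then gives $D\boldsymbol u_n\to D\boldsymbol u$ a.e. Finally, $|D\boldsymbol u_n|^{p-1}$ is bounded in $L^{p/(p-1)}$, hence equi-integrable, and Vitali's theorem yields $|D\boldsymbol u_n|^{p-1}\to|D\boldsymbol u|^{p-1}$ in $L^1(\Omega)$.
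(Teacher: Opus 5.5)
Your route is correct in outline and differs from the paper's, but the step you flag as the main obstacle needs a different grouping from the one you propose before it closes for all $p>1$.

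\textbf{The open step.} Your split $D\boldsymbol u_n=D\boldsymbol u+D\boldsymbol w$ leaves the cross term $|D\boldsymbol u_n|^{p-2}D\boldsymbol u\cdot MD\boldsymbol w$. Hölder's inequality controls it on $\{|\boldsymbol w|\ge\varepsilon\}$ when $p\ge2$. It does not when $1<p<2$, because $|D\boldsymbol u_n|^{p-2}$ is then a negative power, large where $D\boldsymbol u_n$ is small. Instead put $D\boldsymbol w=D\boldsymbol u_n-D\boldsymbol u$ in the second slot. With $M=\frac{\varepsilon}{|\boldsymbol w|}\bigl(I-\hat{\boldsymbol w}\otimes\hat{\boldsymbol w}\bigr)$ symmetric, positive semidefinite and of norm at most $1$,
\[
|D\boldsymbol u_n|^{p-2}D\boldsymbol u_n\cdot MD\boldsymbol w=|D\boldsymbol u_n|^{p-2}D\boldsymbol u_n\cdot MD\boldsymbol u_n-|D\boldsymbol u_n|^{p-2}D\boldsymbol u_n\cdot MD\boldsymbol u\ge-|D\boldsymbol u_n|^{p-1}|D\boldsymbol u|.
\]
For the other piece, $\bigl||D\boldsymbol u|^{p-2}D\boldsymbol u\cdot MD\boldsymbol w\bigr|\le|D\boldsymbol u|^{p-1}(|D\boldsymbol u_n|+|D\boldsymbol u|)$. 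Moreover $|\{|\boldsymbol w|\ge\varepsilon\}|\le\varepsilon^{-p}|\boldsymbol w|_p^p\to0$. Hölder's inequality, the bound $\nu_0\le A\le C_0$, and absolute continuity of $\int|D\boldsymbol u|^p$ then show that the contribution of $\{|\boldsymbol w|\ge\varepsilon\}$ is at least $-o(1)$ for every $p>1$. So Landes' trick is not needed, and the rest of your argument goes through: the $L^1$-smallness of $\phi\,e_n^\theta$, a.e. convergence via strict monotonicity, and Vitali.

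\textbf{Comparison with the paper.} The paper runs no truncation argument. It rewrites the system as a divergence-form system and cites Dal Maso--Murat [Theorem 1] as a black box. That gives $|D\boldsymbol u_n|^\tau\to|D\boldsymbol u|^\tau$ in $L^1$ for $1\le\tau<p$, together with a.e. convergence. The paper then takes $\tau=p-1$ when $p\ge2$, and for $p<2$ passes from $\tau=1$ to $p-1$ by Hölder.

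The paper's proof is much shorter, but it rests entirely on the cited theorem applying here. Its write-up is also loose at exactly the point your Step 1 handles correctly. The paper's $\widetilde{\boldsymbol\omega}_n$, as defined, equals $\frac1p\nabla_{\boldsymbol s}A(x,\boldsymbol u_n)|D\boldsymbol u_n|^p$. That term is only bounded in $L^1$, so its asserted convergence in $W^{-1,p'}(\Omega)$ is not justified. The correct input is your decomposition: a part converging strongly in $W^{-1,p'}$ plus a part bounded in $L^1$.

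Your proof is self-contained and deals explicitly with the fact that the coefficient $A(x,\boldsymbol u_n)$ varies with $n$. You handle this through \eqref{a.2} and dominated convergence. Your Vitali ending also replaces the paper's case split on $p$.
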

\begin{proof}
    Since $\boldsymbol u_n\wto\boldsymbol u$ in $W_0^{1,p}(\Omega)$ and $\boldsymbol\omega_n\to\boldsymbol\omega$ in $W^{-1,p'}(\Omega)$, we have
\[
\widetilde{\boldsymbol\omega}_n:=\boldsymbol\omega_n+\boldsymbol{div}(A(x,\boldsymbol u_n)|D\boldsymbol u_n|^{p-2}D\boldsymbol u_n)\to\widetilde{\boldsymbol\omega}\quad\text{in $W^{-1,p'}(\Omega)$},
    \]
    and also
    \[
    -\boldsymbol{div}(A(x,\boldsymbol u_n)|D\boldsymbol u_n|^{p-2}D\boldsymbol u_n)=\widetilde{\boldsymbol\omega}_n\qquad\text{in $\mathcal D'(\Omega)$.}
    \]
    Thus, by \cite[Theorem 1]{GradientConvergence98}, we obtain
    \begin{align}\label{convergence gradient}
    |&D\boldsymbol u_n|^{\tau}\to |D\boldsymbol u|^{\tau}\quad \text{in $L^1(\Omega)$, for every $1\le\tau<p$,}\\
    &D\boldsymbol u_n\to D\boldsymbol u\quad\text{a.e. in $\Omega$.}
    \end{align}
    If $p\ge2$, the claim follows by taking $\tau=p-1$ in \eqref{convergence gradient}. 
    
    Otherwise, $p-1,2-p\in(0,1)$, and by H\"older's inequality with exponents $\frac{1}{p-1},\frac{1}{2-p}$:
    \begin{align}
        \int_\Omega|D\boldsymbol u_n-D\boldsymbol u|^{p-1}\le\left(\int_\Omega|D\boldsymbol u_n-D\boldsymbol u|\right)^{p-1}|\Omega|^{2-p}.
    \end{align}
    Now, the right-hand side goes to zero from \eqref{convergence gradient} with $\tau=1$.
\end{proof}

\begin{proposition}\label{chiusura debole di soluzioni} 
Assume that \eqref{a.1}-\eqref{a.2}, \eqref{psi ipotesi} hold.
    Let $\{\boldsymbol u_n\}\subset W_0^{1,p}(\Omega)$ be a weakly convergent sequence such that the sequence $\{\boldsymbol\omega_n\}$, given by
    \[
    \boldsymbol\omega_n:=-\boldsymbol{div}(A(x,\boldsymbol u_n)|D\boldsymbol u_n|^{p-2}D\boldsymbol u_n)+\frac1p\nabla_{\boldsymbol s}A(x,\boldsymbol u_n)|D\boldsymbol u_n|^p,
    \]
    is strongly convergent in $W^{-1,p'}(\Omega)$. Then 
    \[
    -\boldsymbol{div}(A(x,\boldsymbol u)|D\boldsymbol u|^{p-2}D\boldsymbol u)+\frac1p\nabla_{\boldsymbol s}A(x,\boldsymbol u)|D\boldsymbol u|^p=\boldsymbol \omega\quad\text{in $\mathcal D'(\Omega)$},
    \]
    where $\boldsymbol u$ is the weak limit of $\boldsymbol u_n$ and $\boldsymbol \omega$ is the strongly limit of $\boldsymbol\omega_n$ in $W_0^{1,p}(\Omega)$ and $W^{-1,p'}(\Omega)$ respectively.
\end{proposition}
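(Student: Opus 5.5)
First I apply Lemma \ref{lemma convergence gradient}. Since $\boldsymbol u_n\wto\boldsymbol u$ in $W_0^{1,p}(\Omega)$ and $\boldsymbol\omega_n\to\boldsymbol\omega$ in $W^{-1,p'}(\Omega)$, it gives $D\boldsymbol u_n\to D\boldsymbol u$ a.e. and $|D\boldsymbol u_n|^{p-1}\to|D\boldsymbol u|^{p-1}$ in $L^1(\Omega)$. By Rellich, up to a subsequence, also $\boldsymbol u_n\to\boldsymbol u$ a.e. Consequently $A(x,\boldsymbol u_n)|D\boldsymbol u_n|^{p-2}D\boldsymbol u_n\to A(x,\boldsymbol u)|D\boldsymbol u|^{p-2}D\boldsymbol u$ a.e. This sequence is bounded in $L^{p'}$ by \eqref{a.1}, so it converges weakly in $L^{p'}$, which takes care of the principal part.

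The difficulty is the lower order term $\frac1p\nabla_{\boldsymbol s}A(x,\boldsymbol u_n)|D\boldsymbol u_n|^p$. It is only bounded in $L^1$, and a.e. convergence alone does not give convergence in $\mathcal D'$, since concentration of $|D\boldsymbol u_n|^p$ may occur. The remedy, following the scalar nonsmooth theory, is to test with exponentially weighted functions built from $\psi$ so that the bad term acquires a sign. Fix $\varphi\in C_c^\infty(\Omega)$ with $\varphi\ge0$, an index $j$ and a sign $\sigma\in\{-1,1\}$. In the equation for $\boldsymbol u_n$ I use test functions $\boldsymbol v_n$ whose only nonzero component is the $j$-th, equal to $\varphi e^{\sigma\psi(u_j)-\sigma\psi(u_{n,j})}$, or more generally to a sum over components with the signs $\sigma_i$. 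This is admissible because $\psi$ is Lipschitz and $\varphi$ is bounded, and if necessary I truncate $\boldsymbol u$ so that the exponential stays bounded.

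Expanding $\langle\boldsymbol\omega_n,\boldsymbol v_n\rangle$ produces four kinds of terms. The term $\varphi A|D\boldsymbol u_n|^{p-2}D\boldsymbol u_n\cdot D(\cdots)$ passes to the limit. The derivative falling on $e^{-\sigma\psi(u_{n,j})}$ gives $-\sigma A\psi'(u_{n,j})|D\boldsymbol u_n|^{p-2}|\nabla u_{n,j}|^2\varphi e^{\cdots}$. The term $\frac{\sigma_i}{p}\partial_{s_i}A|D\boldsymbol u_n|^p\varphi e^{\cdots}$ comes from the lower order part. The last term carries $\nabla u_j$ and converges by strong/weak pairing. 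Hypothesis \eqref{psi ipotesi}, applied with $\boldsymbol\xi_j=|D\boldsymbol u_n|^{(p-2)/2}\nabla u_{n,j}$ and with $\boldsymbol s'$ close to $\boldsymbol s$ (this is where the restriction $|\boldsymbol s-\boldsymbol s'|\le1$ enters, after localizing on $\{|\boldsymbol u_n-\boldsymbol u|\le1\}$, a set whose complement has measure tending to zero), shows that the sum of the two gradient-squared integrands is nonpositive. Fatou's lemma then yields a one-sided inequality in the limit:
\[
\int_\Omega A(x,\boldsymbol u)|D\boldsymbol u|^{p-2}D\boldsymbol u\cdot D\boldsymbol v+\frac1p\int_\Omega(\nabla_{\boldsymbol s}A(x,\boldsymbol u)\cdot\boldsymbol v)|D\boldsymbol u|^p\le\langle\boldsymbol\omega,\boldsymbol v\rangle
\]
for test functions of the form $\boldsymbol v=\varphi e^{\sigma\psi(u_j)-\sigma\psi(u_j)}\boldsymbol e_j$. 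Here the exponentials collapse to $1$ at the limit, while the inequality is preserved through Fatou on the nonnegative part.

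Running this argument with $\sigma=+1$ and $\sigma=-1$ gives both inequalities for $\boldsymbol v=\pm\varphi\boldsymbol e_j$. For general $\varphi$ I split $\varphi=\varphi^+-\varphi^-$ and conclude that equality holds in $\mathcal D'(\Omega)$. The main obstacle is the bookkeeping in the Fatou step. One has to check that the terms controlled by \eqref{psi ipotesi} have the right sign uniformly in $n$, using exactly the sign patterns $\sigma_i$ in the hypothesis. One also has to make sure that the exponential weights, which depend on both $\boldsymbol u$ and $\boldsymbol u_n$, converge a.e. to $1$ while remaining bounded, which requires a truncation of $\boldsymbol u$ and a final limit in the truncation level.
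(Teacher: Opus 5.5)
Your key sign step fails for the test functions you actually use. You mention in passing a version with all components and signs $\sigma_i$, but your argument and your final assembly rest on the single-component one. Take $\boldsymbol v_n$ whose only nonzero component is the $j$-th, $\sigma\varphi\,e^{\sigma\psi(u_j)-\sigma\psi(u_{n,j})}$. You need the prefactor $\sigma$: without it, the term from the exponential is $-\sigma A\psi'(u_{n,j})|D\boldsymbol u_n|^{p-2}|\nabla u_{n,j}|^2$, which has no sign when $\sigma=-1$.

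Even with the prefactor, the argument breaks. The lower-order part contributes $\frac{\sigma}{p}\partial_{s_j}A(x,\boldsymbol u_n)|D\boldsymbol u_n|^{p-2}\sum_{k=1}^m|\nabla u_{n,k}|^2$ times the weight, because $|D\boldsymbol u_n|^p$ involves every component. The good term $-A\psi'(u_{n,j})|D\boldsymbol u_n|^{p-2}|\nabla u_{n,j}|^2$ controls only $|\nabla u_{n,j}|^2$. For $k\neq j$, the coefficient of $|\nabla u_{n,k}|^2$ is $\frac{\sigma}{p}\partial_{s_j}A$ times a positive weight, which is positive for one of the two signs. Hypothesis \eqref{psi ipotesi} says nothing about it: there, the coefficient of $|\boldsymbol\xi_k|^2$ contains $-A\psi'(s_k)e^{\sigma_k\psi(s_k')-\sigma_k\psi(s_k)}$, and that term is produced only by a $k$-th component of the test function.

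So you must test with all components at once, as the paper does: $\boldsymbol v_n=\varphi_n\bigl(\sigma_1e^{\sigma_1\psi(u_1)-\sigma_1\psi(u_{n,1})},\dots,\sigma_me^{\sigma_m\psi(u_m)-\sigma_m\psi(u_{n,m})}\bigr)$. The resulting integrand $h_n$ is $\le0$, so reverse Fatou gives $\langle\boldsymbol\omega,\boldsymbol\sigma\varphi\rangle\le\int_\Omega A(x,\boldsymbol u)|D\boldsymbol u|^{p-2}D\boldsymbol u\cdot D(\boldsymbol\sigma\varphi)+\frac1p\int_\Omega(\nabla_{\boldsymbol s}A(x,\boldsymbol u)\cdot\boldsymbol\sigma\varphi)|D\boldsymbol u|^p$ for every $\boldsymbol\sigma\in\{-1,1\}^m$. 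This is the opposite direction to what you wrote, which is harmless. Replacing $\boldsymbol\sigma$ by $-\boldsymbol\sigma$ gives equality. Your assembly via $\pm\varphi\boldsymbol e_j$ is then unavailable and must be replaced by the fact that $\{-1,1\}^m$ spans $\R^m$, e.g. $\varphi\boldsymbol e_j=2^{-m}\sum_{\boldsymbol\sigma}\sigma_j\,\boldsymbol\sigma\varphi$. This combination has coefficients of both signs, so it is legitimate only because equality, not just an inequality, holds for each $\boldsymbol\sigma\varphi$. For $m=1$ the single-component scheme is fine.

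The localization is also not handled correctly. Truncating $\boldsymbol u$ does not make $e^{-\sigma\psi(u_{n,j})}$ bounded, since $\psi$ may be unbounded (e.g.\ linear) and $\boldsymbol u_n$ is not bounded. Then $\boldsymbol v_n\notin L^\infty(\Omega)$, and Theorem~\ref{brezis browder} cannot be applied. Nor can you discard $\{|\boldsymbol u_n-\boldsymbol u|>1\}$ because its measure tends to zero: $|D\boldsymbol u_n|^p$ is only bounded in $L^1$ and may concentrate there, which is exactly the obstruction you identified yourself.

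The paper instead builds the cutoff into the test function, $\varphi_n=\varphi H(|\boldsymbol u-\boldsymbol u_n|)$ with $H=1$ on $[-\frac12,\frac12]$ and $H=0$ outside $(-1,1)$. On its support, $|\psi(u_i)-\psi(u_{n,i})|$ is bounded by the Lipschitz constant of $\psi$. Hence $\boldsymbol v_n\in W_0^{1,p}(\Omega)\cap L^\infty(\Omega)$, $\boldsymbol v_n\wto\boldsymbol\sigma\varphi$, and \eqref{psi ipotesi} is invoked only where $|\boldsymbol u-\boldsymbol u_n|\le1$. No truncation of $\boldsymbol u$ and no limit in a truncation level are needed. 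In exchange, you must pass to the limit in the extra term where the derivative falls on $H(|\boldsymbol u-\boldsymbol u_n|)$, which the paper includes in $f_n$.
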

\begin{proof}

Let $\psi:\R\to\R$ be a Lipschitz continuous function as in \eqref{psi ipotesi}, and  consider the test function
\[
\boldsymbol v_n=\varphi H(|\boldsymbol u-\boldsymbol u_n|)\left(
\sigma_1e^{\sigma_1\psi(u_1)-\sigma_1\psi(u_{n,1})},\dots,\sigma_me^{\sigma_m\psi(u_m)-\sigma_m\psi(u_{n,m})}\right),
\]
where $\sigma_1,\dots,\sigma_m\in\{-1,1\},\varphi\in C_c^\infty(\Omega;\R),\varphi\ge0$,
and $H\in C^1(\R;\R)$ is a cut-off function such that
$H(s)=1$ for every $-\frac12\le s\le\frac12$, $H(s)=0$ for every $s\notin(-1,1)$.

Notice that $\boldsymbol v_n\in W_0^{1,p}(\Omega)\cap L^\infty(\Omega)$, and $\boldsymbol v_n\wto (\sigma_1\varphi,\dots,\sigma_m\varphi)$ in $W_0^{1,p}(\Omega)$. 

By Theorem \ref{brezis browder}, we obtain
\begin{align*}
\langle\boldsymbol\omega_n,\boldsymbol v_n\rangle&=\int_\Omega A(x,\boldsymbol u_n)|D\boldsymbol u_n|^{p-2}\sum_{i=1}^m\left[\nabla u_{n,i}\cdot\nabla\left(\varphi_n\right)\sigma_ie^{\sigma_i\psi(u_i)-\sigma_i\psi(u_{n,i})}\right]\\
&\quad+\int_\Omega A(x,\boldsymbol u_n)|D\boldsymbol u_n|^{p-2}\sum_{i=1}^m\left[\nabla u_{n,i}\cdot\nabla(\psi(u_i)-\psi(u_{n,i}))e^{\sigma_i\psi(u_i)-\sigma_i\psi(u_{n,i})}
\right]\varphi_n\\
&\quad+\int_\Omega\sum_{i=1}^m\frac{\sigma_i}{p}\partial_{s_i}A(x,\boldsymbol u_n)e^{M\sigma_i(u_i-u_{n,i})}\varphi_n|D\boldsymbol u_n|^p,
\end{align*}
where $\varphi_n=\varphi H(|\boldsymbol u-\boldsymbol u_n|)$.

 Consider 
\[
f_n(x):= A(x,\boldsymbol u_n)|D\boldsymbol u_n|^{p-2}\sum_{i=1}^m\left[\sigma_i\nabla u_{n,i}\cdot\nabla\varphi_n+\varphi_n\nabla u_{n,i}\cdot\nabla\psi( u_i) \right]e^{\sigma_i\psi(u_i)-\sigma_i\psi(u_{n,i})}.
\]
According to \eqref{a.1}, we have the following bound:
\begin{equation}\label{bound f_n}
|f_n(x)|\le C_1|D\boldsymbol u_n|^{p-1}\cdot|\nabla\varphi_n|+C_1|D\boldsymbol u_n|^{p-1}\cdot|D\boldsymbol u|,\qquad\text{for some $C_1>0$}.
\end{equation}
Moreover, up to a subsequence,
\begin{align}
    |D\boldsymbol u_n|^{p-1}\to |D\boldsymbol u|^{p-1}\quad\text{in $L^1(\Omega)$},\qquad D\boldsymbol u_n\to D\boldsymbol u\quad\text{a.e. in $\Omega$},
\end{align}
 by  Lemma \ref{lemma convergence gradient}. Thus, by Lebesgue's Theorem, $f_n\to f$ in $L^1(\Omega)$ where
\[
f(x):=A(x,\boldsymbol{u})|D\boldsymbol{u}|^{p-2}\left[D\boldsymbol u\cdot D(\boldsymbol\sigma\varphi)+\sum_{i=1}^m\psi'(u_i)|\nabla u_i|^2\varphi\right],\quad \boldsymbol\sigma\varphi:=(\sigma_1\varphi,\dots,\sigma_m\varphi).
\]

Now, we treat the integral
\[
\int_\Omega\sum_{i=1}^m\left[\frac{\sigma_i}{p}\partial_{s_i}A(x,\boldsymbol u_n)|D\boldsymbol u_n|^p-A(x,u_n)|D\boldsymbol u_n|^{p-2}\nabla u_{n,i}\cdot\nabla\psi(u_{n,i})\right] e^{\sigma_i\psi(u_i)-\sigma_i\psi(u_{n,i})}\varphi_n.
\]
From \eqref{psi ipotesi}, we have that
\begin{align}
    h_n:&=\sum_{i=1}^m\left[\frac{\sigma_i}{p}\partial_{s_i}A(x,\boldsymbol u_n)|D\boldsymbol u_n|^2-A(x,u_n)|\nabla u_{n,i}|^2\psi'(u_{n,i})\right]|D\boldsymbol u_n|^{p-2} e^{\sigma_i\psi(u_i)-\sigma_i\psi(u_{n,i})}\varphi_n\\
    &=\bigg[\sum_{i=1}^m\left(\frac{\sigma_i}{p}\partial_{s_i}A(x,\boldsymbol u_n)e^{\sigma_i\psi(u_i)-\sigma_i\psi(u_{n,i})}\right)\sum_{j=1}^m|\nabla u_{n,j}|^2\\
    &\quad-\sum_{i=1}^mA(x,\boldsymbol u_n)\psi'(u_{n,i})e^{\sigma_i\psi(u_i)-\sigma_i\psi(u_{n,i})}|\nabla u_{n,i}|^2\bigg]|D\boldsymbol u_n|^{p-2}\varphi_n\\
    &=\sum_{j=1}^m\bigg[\sum_{i=1}^m\left(\frac{\sigma_i}{p}\partial_{s_i}A(x,\boldsymbol u_n)e^{\sigma_i\psi(u_i)-\sigma_i\psi(u_{n,i})}\right)\\
    &\quad-A(x,\boldsymbol u_n)\psi'(u_{n,j})e^{\sigma_j\psi(u_j)-\sigma_j\psi(u_{n,j})}\bigg]|\nabla u_{n,j}|^2|D\boldsymbol u_n|^{p-2}\varphi_n\le0.
\end{align}
{
Since $D\boldsymbol{u}_n\to D\boldsymbol u$ a.e. in $\Omega$ by Lemma \ref{lemma convergence gradient}, by Fatou's Lemma we deduce that
\[
\limsup_{n\to+\infty}\int_\Omega h_n(x)\le\frac1p\int_\Omega\nabla_{\boldsymbol s}A(x,\boldsymbol u)\cdot(\boldsymbol\sigma\varphi)|D\boldsymbol u|^p-\sum_{i=1}^m\int_\Omega A(x,\boldsymbol u)|D\boldsymbol u|^{p-2}|\nabla u_i|^2\psi'(u_i)\varphi. 
\]
}
In particular,
\begin{align*}
\langle\boldsymbol\omega,\boldsymbol\sigma\varphi\rangle=\limsup_{n\to+\infty}\langle\boldsymbol\omega_n,\boldsymbol v_n\rangle\le\int_\Omega A(x,\boldsymbol u)|D\boldsymbol u|^{p-2}D\boldsymbol u\cdot D(\boldsymbol\sigma\varphi)+\frac1p\int_\Omega\nabla_{\boldsymbol s}A(x,\boldsymbol u)\cdot(\boldsymbol\sigma \varphi)|D\boldsymbol u|^p,
\end{align*}
for every $\boldsymbol\sigma=(\sigma_1,\dots,\sigma_m)$ with $\sigma_1,\dots,\sigma_m\in\{-1,1\}$ and for every $\varphi\in C_c^\infty(\Omega;\R)$ with $\varphi\ge0$. Since
\[
(\sigma_1(-\varphi),\dots,\sigma_m(-\varphi))=(-\sigma_1\varphi,\dots,-\sigma_m\varphi),\qquad \text{with $-\sigma_1,\dots,-\sigma_m\in\{-1,1\}$,}
\]
we conclude that
\[
\langle\boldsymbol\omega,\boldsymbol\sigma\varphi\rangle=\int_\Omega A(x,\boldsymbol u)|D\boldsymbol u|^{p-2}D\boldsymbol u\cdot D(\boldsymbol\sigma\varphi)+\frac1p\int_\Omega\nabla_{\boldsymbol s}A(x,\boldsymbol u)\cdot(\boldsymbol\sigma \varphi)|D\boldsymbol u|^p,
\]
for every $\boldsymbol\sigma=(\sigma_1,\dots,\sigma_m)$ with $\sigma_1,\dots,\sigma_m\in\{-1,1\}$ and for every $\varphi\in C_c^\infty(\Omega;\R)$ with $\varphi\ge0$. 

Since the vectors $\boldsymbol{\sigma}\in\{-1,1\}^m$ generate $\mathbb{R}^m$, 
every $\boldsymbol{\varphi}\in C_c^\infty(\Omega;\mathbb{R}^m)$ 
can be written as a finite linear combination of functions of the form 
$\boldsymbol{\sigma}\,\varphi$, with $\varphi\in C_c^\infty(\Omega;\R)$, $\varphi\ge 0$.
{
Indeed, we have
\begin{align}
    \boldsymbol\varphi&=\sum_{i=1}^m\varphi_i\boldsymbol e_i=\sum_{i=1}^m\left(\varphi_i+|\varphi_i|_\infty \eta_i-|\varphi_i|_\infty \eta_i\right)\boldsymbol e_i\\
    &=\sum_{i=1}^m\frac{1}{2^m}\sum_{\boldsymbol\sigma\in\{-1,1\}^m}\left(\varphi_i+|\varphi_i|_\infty \eta_i-|\varphi_i|_\infty \eta_i\right)\sigma_i\boldsymbol\sigma\\
    &=\frac{1}{2^m}\sum_{\boldsymbol\sigma\in\{-1,1\}^m}\sum_{i=1}^m\bigl[\left(\varphi_i+|\varphi_i|_\infty \eta_i\bigr)\sigma_i\boldsymbol\sigma-|\varphi_i|_{\infty}\eta_i\sigma_i\boldsymbol\sigma\right]\\
    &=\frac{1}{2^m}\sum_{\boldsymbol\sigma\in\{-1,1\}^m}\sum_{i=1}^m\bigl[(\sigma_i\boldsymbol\sigma)\left(\varphi_i+|\varphi_i|_\infty \eta_i\right)+(-\sigma_i\boldsymbol\sigma)|\varphi_i|_{\infty}\eta_i\bigr],
\end{align}
where $\{\boldsymbol e_i\}_{i=1}^m$ is the canonical basis of $\R^m$ and $\eta_i\in C_c^\infty(\Omega;[0,1])$ is a cut-off function such that $\eta_i\equiv1$ in $\text{supp}(\varphi_i)$ for every $i=1,\dots,m$.
}

Hence, it follows that
\[
\langle\boldsymbol\omega,\boldsymbol\varphi\rangle=\int_\Omega A(x,\boldsymbol u)|D\boldsymbol u|^{p-2}D\boldsymbol u\cdot D\boldsymbol\varphi+\frac1p\int_\Omega\left(\nabla_{\boldsymbol s}A(x,\boldsymbol u)\cdot\boldsymbol \varphi\right)|D\boldsymbol u|^p,\quad \text{for every $\boldsymbol\varphi\in C_c^\infty(\Omega)$}.
\]
\end{proof}

\begin{theorem}\label{compactness}
Assume that \eqref{a.1}-\eqref{a.3} and \eqref{psi ipotesi} hold.
     Let $\{\boldsymbol u_n\}\subset W_0^{1,p}(\Omega)$ be a bounded sequence such that
    \[
    \boldsymbol\omega_n:=-\boldsymbol{div}(A(x,\boldsymbol u_n)|D\boldsymbol u_n|^{p-2}D\boldsymbol u_n)+\frac1p\nabla_{\boldsymbol s}A(x,\boldsymbol u_n)|D\boldsymbol u_n|^p\in\mathcal D'(\Omega)
    \]
    is strongly convergent in $W^{-1,p'}(\Omega)$. Then $\{\boldsymbol u_n\}$ admits a convergent subsequence in $W_0^{1,p}(\Omega)$.    \end{theorem}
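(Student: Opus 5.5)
Since $\{\boldsymbol u_n\}$ is bounded, up to a subsequence we have $\boldsymbol u_n\wto\boldsymbol u$ in $W_0^{1,p}(\Omega)$, $\boldsymbol u_n\to\boldsymbol u$ in $L^p(\Omega)$ and a.e., and $\boldsymbol\omega_n\to\boldsymbol\omega$ in $W^{-1,p'}(\Omega)$. By Proposition \ref{chiusura debole di soluzioni}, the limit $\boldsymbol u$ solves
\[
-\boldsymbol{div}(A(x,\boldsymbol u)|D\boldsymbol u|^{p-2}D\boldsymbol u)+\frac1p\nabla_{\boldsymbol s}A(x,\boldsymbol u)|D\boldsymbol u|^p=\boldsymbol\omega
\]
in $\mathcal D'(\Omega)$. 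By Lemma \ref{lemma convergence gradient} we also have $D\boldsymbol u_n\to D\boldsymbol u$ a.e. Since $\int_\Omega |D\boldsymbol u_n|^p$ is bounded, a.e. convergence together with $\liminf$ information gives strong convergence once we show
\[
\limsup_{n}\int_\Omega A(x,\boldsymbol u_n)|D\boldsymbol u_n|^p\le\int_\Omega A(x,\boldsymbol u)|D\boldsymbol u|^p .
\]
Indeed, with $A(x,\boldsymbol u_n)\to A(x,\boldsymbol u)$ a.e., $\nu_0\le A\le C_0$, Fatou's lemma and a Brezis--Lieb type argument give $\int_\Omega |D\boldsymbol u_n|^p\to\int_\Omega |D\boldsymbol u|^p$. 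Uniform convexity of $L^p$ then yields $D\boldsymbol u_n\to D\boldsymbol u$ in $L^p$.

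To obtain the $\limsup$ inequality, I test both equations with the solutions themselves, using \eqref{a.3} for the sign of the lower-order term. Theorem \ref{brezis browder} is applicable with $\boldsymbol v=\boldsymbol u_n$ for the $n$-th equation and $\boldsymbol v=\boldsymbol u$ for the limit equation, because $\nabla_{\boldsymbol s}A(x,\boldsymbol u_n)\cdot\boldsymbol u_n\ge0$ by \eqref{a.3}, so the negative part vanishes. This gives
\[
\int_\Omega A(x,\boldsymbol u_n)|D\boldsymbol u_n|^p+\frac1p\int_\Omega(\nabla_{\boldsymbol s}A(x,\boldsymbol u_n)\cdot\boldsymbol u_n)|D\boldsymbol u_n|^p=\langle\boldsymbol\omega_n,\boldsymbol u_n\rangle\to\langle\boldsymbol\omega,\boldsymbol u\rangle,
\]
where the convergence uses strong convergence of $\boldsymbol\omega_n$ against weak convergence of $\boldsymbol u_n$. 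The limit equation tested with $\boldsymbol u$ gives the same identity with $\boldsymbol u$ in place of $\boldsymbol u_n$. The nonnegative integrand $(\nabla_{\boldsymbol s}A(x,\boldsymbol u_n)\cdot\boldsymbol u_n)|D\boldsymbol u_n|^p$ converges a.e. to the corresponding limit, so Fatou's lemma gives
\[
\liminf_n \frac1p\int_\Omega(\nabla_{\boldsymbol s}A(x,\boldsymbol u_n)\cdot\boldsymbol u_n)|D\boldsymbol u_n|^p\ge\frac1p\int_\Omega(\nabla_{\boldsymbol s}A(x,\boldsymbol u)\cdot\boldsymbol u)|D\boldsymbol u|^p .
\]
Subtracting this from the two identities yields exactly
\[
\limsup_n\int_\Omega A(x,\boldsymbol u_n)|D\boldsymbol u_n|^p\le \int_\Omega A(x,\boldsymbol u)|D\boldsymbol u|^p .
\]
Fatou applied to $A(x,\boldsymbol u_n)|D\boldsymbol u_n|^p$ gives the reverse inequality, so $\int_\Omega A(x,\boldsymbol u_n)|D\boldsymbol u_n|^p\to\int_\Omega A(x,\boldsymbol u)|D\boldsymbol u|^p$.

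The step I expect to be the main obstacle is passing from this weighted convergence to norm convergence. I would handle it with the Brezis--Lieb type argument mentioned above. Set $g_n:=A(x,\boldsymbol u_n)|D\boldsymbol u_n|^p$, which converges to $g:=A(x,\boldsymbol u)|D\boldsymbol u|^p$ a.e. and in $L^1$, since these are nonnegative functions whose integrals converge (Scheffé). Then $\nu_0|D\boldsymbol u_n|^p\le g_n$, so $\{|D\boldsymbol u_n|^p\}$ is equi-integrable. Combined with $D\boldsymbol u_n\to D\boldsymbol u$ a.e., Vitali's theorem gives $|D\boldsymbol u_n-D\boldsymbol u|^p\to0$ in $L^1$. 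This is the desired strong convergence in $W_0^{1,p}(\Omega)$.
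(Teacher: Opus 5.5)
Your proposal is correct and follows the paper's proof: you extract the same subsequence, test both equations with $\boldsymbol u_n$ and $\boldsymbol u$ via Theorem \ref{brezis browder} and Proposition \ref{chiusura debole di soluzioni}, and use \eqref{a.3} with Fatou on the nonnegative term to obtain $\int_\Omega A(x,\boldsymbol u_n)|D\boldsymbol u_n|^p\to\int_\Omega A(x,\boldsymbol u)|D\boldsymbol u|^p$. The only variation is the last step, where you pass to strong convergence through Scheff\'e, the bound $\nu_0|D\boldsymbol u_n|^p\le A(x,\boldsymbol u_n)|D\boldsymbol u_n|^p$ and Vitali; the paper instead splits $|D\boldsymbol u_n|^p-|D\boldsymbol u|^p$ algebraically to get convergence of the norms, leaving the Scheff\'e and uniform-convexity steps implicit, so your version is slightly more explicit.
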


\begin{proof}
Since $\{\boldsymbol u_n\}$ is bounded in $W_0^{1,p}(\Omega)$ and
$\{\boldsymbol \omega_n\}$ converges strongly in $W^{-1,p'}(\Omega)$,
up to a subsequence, we have
\begin{align}\label{convergenza sottosuccessione}
\boldsymbol u_n \rightharpoonup \boldsymbol u 
\quad \text{in } W_0^{1,p}(\Omega),
\qquad  \boldsymbol u_n\to \boldsymbol u\quad\text{a.e. in $\Omega$}\qquad
\boldsymbol \omega_n \to \boldsymbol \omega 
\quad \text{in } W^{-1,p'}(\Omega).
\end{align}
Moreover, by Theorem \ref{brezis browder} and Proposition \ref{chiusura debole di soluzioni}, we also obtain
\[
\int_\Omega A(x,\boldsymbol u_n)|D\boldsymbol u_n|^{p}+\frac1p\int_\Omega\left( \nabla_{\boldsymbol s}A(x,\boldsymbol u_n)\cdot\boldsymbol u_n\right)|D\boldsymbol u_n|^p=\langle\boldsymbol \omega_n,\boldsymbol u_n\rangle,
\]
\[
\int_\Omega A(x,\boldsymbol u)|D\boldsymbol u|^{p}+\frac1p\int_\Omega\left( \nabla_{\boldsymbol s}A(x,\boldsymbol u)\cdot\boldsymbol u\right)|D\boldsymbol u|^p=\langle\boldsymbol \omega,\boldsymbol u\rangle.
\]
Now, let us prove that
\[
\limsup_{n\to+\infty}\int_\Omega A(x,\boldsymbol u_n)|D\boldsymbol u_n|^p\le \int_\Omega A(x,\boldsymbol u)|D\boldsymbol u|^p.
\]

According to Lemma \ref{lemma convergence gradient}, $|D\boldsymbol u_n|^p\to |D\boldsymbol u|^p$ a.e. in $\Omega$, and by \eqref{a.3}, we have
\begin{align}
    \nabla_{\boldsymbol s}A(x,\boldsymbol u_n)\cdot\boldsymbol u_n\ge0\qquad\text{for a.e. $x\in\Omega$.}
\end{align}
Hence,
\[
\int_\Omega \left(\nabla_{\boldsymbol s}A(x,\boldsymbol u)\cdot \boldsymbol u\right)|D\boldsymbol u|^p\le\liminf_{n\to+\infty}\int_\Omega \left(\nabla_{\boldsymbol s}A(x,\boldsymbol u_n)\cdot\boldsymbol u_n\right)|D\boldsymbol u_n|^p,
\]
by Fatou's Lemma.
Thus,
\begin{align*}
\limsup_{n\to+\infty}\int_\Omega A(x,\boldsymbol u_n)|D\boldsymbol u_n|^p&=\limsup_{n\to+\infty}\left\{\langle\boldsymbol \omega_n,\boldsymbol u_n\rangle-\frac1p\int_\Omega \left(\nabla_{\boldsymbol s}A(x,\boldsymbol u_n)\cdot\boldsymbol u_n\right)|D\boldsymbol u_n|^p
\right\}\\
&=\langle\boldsymbol\omega,\boldsymbol u\rangle-\liminf_{n\to+\infty}\int_\Omega \left(\nabla_{\boldsymbol s}A(x,\boldsymbol u_n)\cdot\boldsymbol u_n\right)|D\boldsymbol u_n|^p\\
&\le\langle\boldsymbol\omega,\boldsymbol u\rangle-\int_\Omega \left(\nabla_{\boldsymbol s}A(x,\boldsymbol u)\cdot \boldsymbol u\right)|D\boldsymbol u|^p\\
&=\int_\Omega A(x,\boldsymbol u)|D\boldsymbol u|^p.
\end{align*}
In particular, the claim is proved, and we have that
\begin{align}\label{limite ADu_n}
\lim_{n\to+\infty}\int_\Omega A(x,\boldsymbol u_n)|D\boldsymbol u_n|^p=\int_\Omega A(x,\boldsymbol u)|D\boldsymbol u|^p.
\end{align}
Finally, we can show that $\boldsymbol u_n\to\boldsymbol u$ in $W_0^{1,p}(\Omega)$. Notice that
\begin{align*}
    |D\boldsymbol u_n|^p-|D\boldsymbol u|^p&=\frac{A(x,\boldsymbol u_n)|D\boldsymbol u_n|^p+A(x,\boldsymbol u)|D\boldsymbol u|^p-A(x,\boldsymbol u)|D\boldsymbol u|^p-A(x,\boldsymbol u_n)|D\boldsymbol u|^p}{A(x,\boldsymbol u_n)}\\
    &=\frac{A(x,\boldsymbol u)-A(x,\boldsymbol u_n)}{A(x,\boldsymbol u_n)}|D\boldsymbol u|^p+\frac{A(x,\boldsymbol u_n)|D\boldsymbol u_n|^p-A(x,\boldsymbol u)|D\boldsymbol u|^p}{A(x,\boldsymbol u_n)}
\end{align*}

Therefore, by \eqref{convergenza sottosuccessione}-\eqref{limite ADu_n} and \eqref{a.1}-\eqref{a.2}, we deduce that

\begin{align*}
    \left|\int_\Omega|D\boldsymbol u_n|^p-\int_\Omega |D\boldsymbol u|^p\right|&\le\left|\int_\Omega\frac{A(x,\boldsymbol u)-A(x,\boldsymbol u_n)}{A(x,\boldsymbol u_n)}|D\boldsymbol u|^p\right|\\
    &\quad+\left|\int_\Omega\frac{A(x,\boldsymbol u_n)|D\boldsymbol u_n|^p-A(x,\boldsymbol u)|D\boldsymbol u|^p}{A(x,\boldsymbol u_n)}\right|\\
    &\le\frac{1}{\nu_0}\int_\Omega\bigl|A(x,\boldsymbol u)-A(x,\boldsymbol u_n)\bigr|\cdot|D\boldsymbol u|^p\\
    &\quad+\frac{1}{\nu_0}\int_\Omega\bigl|A(x,\boldsymbol u_n)|D\boldsymbol u_n|^p-A(x,\boldsymbol u)|D\boldsymbol u|^p\bigr|\longrightarrow0\qquad\text{as $n\to+\infty$,}
\end{align*}
and $\boldsymbol u_n\to\boldsymbol u$ in $W_0^{1,p}(\Omega)$.
\end{proof}

\begin{corollary}\label{CPS condition corollary}
Assume that \eqref{a.1}-\eqref{a.3}, \eqref{psi ipotesi} and \eqref{g.1} hold. 
   For every $c\in\R$ the following facts are equivalent:
    \begin{itemize}
        \item[$(a)$] $\mathcal J$ satisfies the $(CPS)_c$-condition;
        \item[$(b)$] every $(CPS)_c$-sequence for $\mathcal J$ is bounded in $W_0^{1,p}(\Omega)$.
    \end{itemize}
\end{corollary}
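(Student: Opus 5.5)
The implication $(a)\Rightarrow(b)$ is essentially immediate. If $\mathcal J$ satisfies the $(CPS)_c$-condition, then every subsequence of a $(CPS)_c$-sequence $\{\boldsymbol u_n\}$ is again a $(CPS)_c$-sequence and therefore has a strongly convergent, hence bounded, further subsequence. Now suppose $\{\boldsymbol u_n\}$ were unbounded. We could extract a subsequence with $\|\boldsymbol u_{n_k}\|\to+\infty$, and this subsequence would have no bounded sub-subsequence, a contradiction. So every $(CPS)_c$-sequence is bounded.

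For $(b)\Rightarrow(a)$, take a $(CPS)_c$-sequence $\{\boldsymbol u_n\}$. By $(b)$ it is bounded in $W_0^{1,p}(\Omega)$, so up to a subsequence $\boldsymbol u_n\wto\boldsymbol u$ in $W_0^{1,p}(\Omega)$, $\boldsymbol u_n\to\boldsymbol u$ in $L^t(\Omega)$ for every $t<p^*$, and a.e. in $\Omega$. By definition,
\[
-\boldsymbol{div}(A(x,\boldsymbol u_n)|D\boldsymbol u_n|^{p-2}D\boldsymbol u_n)+\frac1p\nabla_{\boldsymbol s}A(x,\boldsymbol u_n)|D\boldsymbol u_n|^p=\boldsymbol\omega_n+\boldsymbol g(x,\boldsymbol u_n),
\]
with $\boldsymbol\omega_n\to0$ in $W^{-1,p'}(\Omega)$. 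The whole point is to show that the right-hand side converges strongly in $W^{-1,p'}(\Omega)$; Theorem \ref{compactness} then gives a strongly convergent subsequence, which is exactly the $(CPS)_c$-condition.

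The only nontrivial step is proving that $\boldsymbol g(x,\boldsymbol u_n)\to\boldsymbol g(x,\boldsymbol u)$ in $W^{-1,p'}(\Omega)$. The plan is to use \eqref{g.1} together with the Nemytskii operator theorem. Set $t'=(p^*)'=\frac{Np}{Np-N+p}$ when $N>p$; then $a\in L^r\subset L^{t'}$ because $r\ge t'$. Pick $\tilde t\in(q,p^*)$ close enough to $p^*$ that $|\boldsymbol s|^{q-1}$ maps $L^{\tilde t}$ into $L^{\tilde t/(q-1)}$ with $\tilde t/(q-1)\ge \tilde t'$; this is possible since $q<p^*$. Then $\boldsymbol u\mapsto \boldsymbol g(x,\boldsymbol u)$ is continuous from $L^{\tilde t}(\Omega)$ into $L^{\tilde t'}(\Omega)+L^{t'}(\Omega)$, and this space embeds continuously into $W^{-1,p'}(\Omega)$ by duality with the Sobolev embedding $W_0^{1,p}\hookrightarrow L^{\tilde t}\cap L^{p^*}$. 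Combined with the compact embedding $\boldsymbol u_n\to\boldsymbol u$ in $L^{\tilde t}$, this yields the claim. An alternative route is Vitali's theorem applied to $\int|\boldsymbol g(x,\boldsymbol u_n)-\boldsymbol g(x,\boldsymbol u)||\boldsymbol v|$, uniformly for $\|\boldsymbol v\|\le1$. The case $N=p$ is handled the same way, with any finite exponent in place of $p^*$. Once this is done, the argument closes as described above.
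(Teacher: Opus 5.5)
Your argument is correct and follows the same route as the paper: boundedness gives a weakly convergent subsequence, Rellich--Kondrachov gives strong $L^t$ convergence, and Theorem \ref{compactness} applied with right-hand side $\boldsymbol\omega_n+\boldsymbol g(x,\boldsymbol u_n)$ yields a strongly convergent subsequence. At the one substantive step you are more careful than the paper: it only records $\int_\Omega\boldsymbol g(x,\boldsymbol u_n)\cdot\boldsymbol v\to\int_\Omega\boldsymbol g(x,\boldsymbol u)\cdot\boldsymbol v$ for each fixed $\boldsymbol v$, whereas Theorem \ref{compactness} needs the strong convergence in $W^{-1,p'}(\Omega)$ that your Nemytskii argument supplies (note that in the borderline case $N=p$ your argument needs $a\in L^r$ with $r>1$, which \eqref{g.1} as stated does not guarantee; the paper has the same gap).
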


\begin{proof}
   The implication $(a)\implies(b)$ is a consequence of the strong convergence.
   
   Conversely, let $\{\boldsymbol u_n\}\subset W_0^{1,{p}}(\Omega)$ be a $(CPS)_c$-sequence, and assume that $\boldsymbol u_n\wto \boldsymbol u$ in $W_0^{1,{p}}(\Omega)$ up to a subsequence. 
   
   Thus, up to a further subsequence, $\boldsymbol u_{n}\to \boldsymbol u$ in $L^{t}(\Omega)$ with $t\in[p,p^*)$, and we have also that
    \[\int_\Omega \boldsymbol g(x,\boldsymbol u_n)\cdot\boldsymbol v\to\int_\Omega \boldsymbol g(x,\boldsymbol u)\cdot\boldsymbol v,\ \ \text{for every $\boldsymbol v\in W_0^{1,p}(\Omega)$ }.
    \]
    According to Theorem \ref{compactness} with $\boldsymbol\omega_n=\boldsymbol g(x,\boldsymbol u_n)$, $\{\boldsymbol u_n\}$ admits a strongly convergent subsequence in $W_0^{1,p}(\Omega)$.
\end{proof}
\section{Multiplicity result}
\begin{proposition} \label{stima per limitatezza}
Assume that \eqref{a.1}-\eqref{a.3} and \eqref{g.1} hold.
Let $\{\boldsymbol u_n\}\subset W_0^{1,p}(\Omega)$ be a $(CPS)$-sequence for $\mathcal J$ at level $c\in\R$. Then, for every $R>0$ and $\varepsilon>0$, there exists a constant $K=K(R,\varepsilon)>0$ such that
\[
\int_{\{|\boldsymbol u_n|\le R\}}A(x,\boldsymbol u_n)|D\boldsymbol u_n|^p\le\varepsilon\int_{\{|\boldsymbol u_n|\ge R\}}A(x,\boldsymbol u_n)|D\boldsymbol u_n|^p+K,\qquad\text{for every $n\in\N$.}
\]
\end{proposition}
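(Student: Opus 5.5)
The plan is to test the equation satisfied by $\boldsymbol u_n$ with a truncation of $\boldsymbol u_n$ at height $R$, using the exponential weight $e^{\psi}$ from \eqref{psi ipotesi} to absorb the term $\frac1p\nabla_{\boldsymbol s}A|D\boldsymbol u_n|^p$. This term has no sign on $\{|\boldsymbol u_n|\le R\}$ once the test function is truncated. We know $\langle\boldsymbol\omega_n,\boldsymbol v\rangle\to 0$ uniformly for $\boldsymbol v$ bounded in $W_0^{1,p}$. The identity
\[
\int_\Omega A(x,\boldsymbol u_n)|D\boldsymbol u_n|^{p-2}D\boldsymbol u_n\cdot D\boldsymbol v+\frac1p\int_\Omega(\nabla_{\boldsymbol s}A(x,\boldsymbol u_n)\cdot\boldsymbol v)|D\boldsymbol u_n|^p=\int_\Omega\boldsymbol g(x,\boldsymbol u_n)\cdot\boldsymbol v+\langle\boldsymbol\omega_n,\boldsymbol v\rangle
\]
holds for every $\boldsymbol v\in W_0^{1,p}\cap L^\infty$ by Theorem \ref{brezis browder}.

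\textbf{Choice of test function.} I take $\boldsymbol v_n=T_R(\boldsymbol u_n)$, where $T_R(\boldsymbol s)=\boldsymbol s\min\{1,R/|\boldsymbol s|\}$. Then $\boldsymbol v_n$ is parallel to $\boldsymbol u_n$, so $\nabla_{\boldsymbol s}A(x,\boldsymbol u_n)\cdot\boldsymbol v_n\ge0$ by \eqref{a.3}, and that term can simply be dropped. This avoids the $\psi$-weight entirely.

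\textbf{Lower bound for the principal part.} On $\{|\boldsymbol u_n|<R\}$ we have $D\boldsymbol v_n=D\boldsymbol u_n$. On $\{|\boldsymbol u_n|\ge R\}$ we have
\[
D\boldsymbol v_n=\frac{R}{|\boldsymbol u_n|}\Bigl(D\boldsymbol u_n-\frac{\boldsymbol u_n}{|\boldsymbol u_n|}\otimes\nabla|\boldsymbol u_n|\Bigr).
\]
The computation $D\boldsymbol u\cdot(\boldsymbol u\otimes\nabla|\boldsymbol u|)=|\boldsymbol u||\nabla|\boldsymbol u||^2$ from Theorem \ref{regularity} then gives
\[
D\boldsymbol u_n\cdot D\boldsymbol v_n=\frac{R}{|\boldsymbol u_n|}\bigl(|D\boldsymbol u_n|^2-|\nabla|\boldsymbol u_n||^2\bigr)\ge0 .
\]
Hence the principal integral is at least $\int_{\{|\boldsymbol u_n|\le R\}}A(x,\boldsymbol u_n)|D\boldsymbol u_n|^p$. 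This already shows that the $\varepsilon$-term is not even needed on the left; I will nonetheless keep the statement's form.

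\textbf{Right-hand side.} Since $|\boldsymbol v_n|\le R$, condition \eqref{g.1} gives
\[
\int_\Omega\boldsymbol g(x,\boldsymbol u_n)\cdot\boldsymbol v_n\le R\bigl(|a|_1+b|\boldsymbol u_n|_{q-1}^{q-1}\bigr).
\]
The difficulty is that $|\boldsymbol u_n|_{q-1}$ is not a priori bounded, since boundedness of $\{\boldsymbol u_n\}$ is exactly what this proposition is meant to help prove. I will split the integral. On $\{|\boldsymbol u_n|\le R\}$ the integrand is bounded by $R(a+bR^{q-1})$. On $\{|\boldsymbol u_n|\ge R\}$ I use $|\boldsymbol g\cdot\boldsymbol v_n|\le R|\boldsymbol g(x,\boldsymbol u_n)|$ and control $\int|\boldsymbol g(x,\boldsymbol u_n)|$ through the Sobolev norm of $(|\boldsymbol u_n|-R)^+$. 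By Poincaré–Sobolev and interpolation, $\int_{\{|\boldsymbol u_n|\ge R\}}|\boldsymbol u_n|^{q-1}$ is bounded by $C+\varepsilon\int_{\{|\boldsymbol u_n|\ge R\}}|D\boldsymbol u_n|^p+C_\varepsilon\bigl|(|\boldsymbol u_n|-R)^+\bigr|_1^{\,\cdot}$.

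This is the main obstacle: the $q-1$ power may exceed $p$, so Young's inequality alone fails. The first fix I will try is to use $\langle\boldsymbol\omega_n,\boldsymbol v_n\rangle\le\|\boldsymbol\omega_n\|\,\|\boldsymbol v_n\|$, where $\|\boldsymbol v_n\|\le C\|\boldsymbol u_n\|$, and bound $\|\boldsymbol\omega_n\|\le1$. I will also use the energy bound $\mathcal J(\boldsymbol u_n)\le c+1$ together with \eqref{g.2}. Failing that, I will replace $T_R$ by a smoother truncation $\boldsymbol u_n\,\vartheta(|\boldsymbol u_n|)$, with $\vartheta$ decaying like $1/|\boldsymbol s|^{q}$ beyond $R$, so that $|\boldsymbol g(x,\boldsymbol u_n)\cdot\boldsymbol v_n|\le a+C$ pointwise. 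The gradient term on $\{|\boldsymbol u_n|\ge R\}$ then produces a contribution $C(R)\int_{\{|\boldsymbol u_n|\ge R\}}|D\boldsymbol u_n|^p/|\boldsymbol u_n|^{q}$. By choosing the decay profile so that its coefficient is below $\varepsilon\nu_0$, and enlarging the cut-off radius, this contribution is at most $\varepsilon\int_{\{|\boldsymbol u_n|\ge R\}}A|D\boldsymbol u_n|^p$, and the constant $K$ collects the remaining terms.
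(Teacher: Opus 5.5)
\textbf{There is a genuine gap in your treatment of the source term.} With $\boldsymbol v_n=T_R(\boldsymbol u_n)$ you have $|\boldsymbol v_n|=R$ on the whole set $\{|\boldsymbol u_n|\ge R\}$. So $\int_\Omega\boldsymbol g(x,\boldsymbol u_n)\cdot\boldsymbol v_n$ is only bounded by $R\int_{\{|\boldsymbol u_n|\ge R\}}(a+b|\boldsymbol u_n|^{q-1})$, which is not under control, as you observe yourself. Neither of your repairs closes this.

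Your first repair invokes \eqref{g.2}, which is not a hypothesis of this proposition. You also do not explain how it would give a coefficient as small as $\varepsilon$ for arbitrary $R$. The bound $\langle\boldsymbol\omega_n,\boldsymbol v_n\rangle\le\|\boldsymbol\omega_n\|\,\|\boldsymbol v_n\|$ concerns a different term altogether.

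Your second repair, $\boldsymbol v_n=\boldsymbol u_n\vartheta(|\boldsymbol u_n|)$ with $\vartheta(t)=(R/t)^q$ for $t\ge R$, does make the $\boldsymbol g$-term bounded. But on $\{|\boldsymbol u_n|>R\}$ it gives
\[
D\boldsymbol u_n\cdot D\boldsymbol v_n=\Bigl(\frac{R}{|\boldsymbol u_n|}\Bigr)^{q}\bigl(|D\boldsymbol u_n|^2-q\,|\nabla|\boldsymbol u_n||^2\bigr)\ge-(q-1)\Bigl(\frac{R}{|\boldsymbol u_n|}\Bigr)^{q}|D\boldsymbol u_n|^2,
\]
with equality whenever $|\nabla|\boldsymbol u_n||=|D\boldsymbol u_n|$, which always holds if $m=1$. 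Continuity at $|\boldsymbol u_n|=R$ forces this profile, so the loss relative to $\int_{\{|\boldsymbol u_n|\ge R\}}A|D\boldsymbol u_n|^p$ has coefficient $q-1$ near $|\boldsymbol u_n|=R$. This factor is scale invariant: starting the power decay at a larger radius $R'$, with $\boldsymbol v_n=\boldsymbol u_n$ up to $R'$, gives $q-1$ again. There is therefore no freedom left to push it below $\varepsilon\nu_0$.

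\textbf{The missing idea is to control the slope of the radial profile, not its decay rate.} Write $\boldsymbol v_n=f(|\boldsymbol u_n|)\,\boldsymbol u_n/|\boldsymbol u_n|$ with $f(t)=t$ on $[0,R]$ and $f\ge0$. Then
\[
D\boldsymbol u_n\cdot D\boldsymbol v_n=\frac{f(|\boldsymbol u_n|)}{|\boldsymbol u_n|}\bigl(|D\boldsymbol u_n|^2-|\nabla|\boldsymbol u_n||^2\bigr)+f'(|\boldsymbol u_n|)\,|\nabla|\boldsymbol u_n||^2 ,
\]
so what you need is $f'\ge-\delta$ on $[R,\infty)$, together with $f(t)=0$ (or $f(t)\le Ct^{1-q}$) for large $t$.

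The paper takes $f(t)=R+\delta R-\delta t$ on $[R,R(1+\delta)/\delta]$ and $f=0$ beyond; this is its $\boldsymbol\theta_\delta$. With this choice:
\begin{itemize}
\item the $\nabla_{\boldsymbol s}A$-term is nonnegative by \eqref{a.3};
\item the principal part is at least $\int_{\{|\boldsymbol u_n|\le R\}}A|D\boldsymbol u_n|^p-\delta\int_{\{|\boldsymbol u_n|\ge R\}}A|D\boldsymbol u_n|^p$;
\item the $\boldsymbol g$-term is at most $R|a|_1+bR\bigl(R(1+\delta)/\delta\bigr)^{q-1}|\Omega|$, using \eqref{g.1} alone.
\end{itemize}
Your decaying-profile idea can be salvaged the same way. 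Keep $f\equiv R$ (that is, $T_R$) up to a radius $R'\ge(q-1)R/\delta$, and only then set $f(t)=R(R'/t)^{q-1}$, so that $|f'|\le(q-1)R/R'\le\delta$.

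\textbf{You must also treat $\langle\boldsymbol\omega_n,\boldsymbol v_n\rangle$ properly.} It is not a constant, since $\|\boldsymbol v_n\|$ is not known to be bounded. Young's inequality gives
\[
\langle\boldsymbol\omega_n,\boldsymbol v_n\rangle\le C_\delta+\delta\int_\Omega|D\boldsymbol v_n|^p\le C_\delta+C\delta\int_\Omega|D\boldsymbol u_n|^p .
\]
The part over $\{|\boldsymbol u_n|\le R\}$ is absorbed into the left-hand side via \eqref{a.2}, and the rest feeds the $\varepsilon$-term. This term, not the principal part, is why the $\varepsilon$-term cannot be dropped even with $T_R$.
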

\begin{proof}
Let
\[
\boldsymbol\omega_n:= -\textbf{div}(A(x,\boldsymbol u_n)|D\boldsymbol u_n|^{p-2}D\boldsymbol u_n)+\frac1p\nabla_{\boldsymbol s}A(x,\boldsymbol u_n)|D\boldsymbol u_n|^p-\boldsymbol g(x,\boldsymbol u_n),
\]
take $\delta\in(0,1)$, and consider
\[
\boldsymbol\theta_\delta=
\begin{cases}
\boldsymbol u_n&\text{if }|\boldsymbol u_n|\le R,\\
-\delta\boldsymbol u_n+(R+\delta R)\frac{\boldsymbol u_n}{|\boldsymbol u_n|}&\text{if } R\le|\boldsymbol u_n|\le\frac{R+\delta R}{\delta},\\
\boldsymbol 0& \text{if } |\boldsymbol u_n|\ge\frac{R+\delta R}{\delta}.
\end{cases}
\]
Then, we have $\boldsymbol\theta_\delta\in W_0^{1,p}(\Omega)\cap L^\infty(\Omega)$ and 
\[
D\boldsymbol\theta_\delta=
\begin{cases}
   D \boldsymbol u_n& \text{if }|\boldsymbol u_n|\le R,\\
-\delta D\boldsymbol u_n+\frac{R+\delta R}{|\boldsymbol u_n|}D\boldsymbol u_n-(R+\delta R)\boldsymbol u_n\otimes\nabla\left(\frac{1}{|\boldsymbol u_n|}\right)& \text{if } R\le|\boldsymbol u_n|\le\frac{R+\delta R}{\delta},\\
\boldsymbol 0& \text{if }|\boldsymbol u_n|\ge\frac{R+\delta R}{\delta}.
\end{cases}
\]
In particular,
\begin{equation}\label{stima gradiente}
    |D\boldsymbol \theta_\delta|\le(2+3\delta)|D\boldsymbol u_n|.
\end{equation}
Since $\boldsymbol u_n$ solves
\[
-\textbf{div}(A(x,\boldsymbol u_n)|D\boldsymbol u_n|^{p-2}D\boldsymbol u_n)+\frac1p\nabla_{\boldsymbol s}A(x,\boldsymbol u_n)|D\boldsymbol u_n|^p=\boldsymbol g(x,\boldsymbol u_n)+\boldsymbol\omega_n\quad\text{in $\mathcal D'(\Omega)$},
\]
according to Theorem \ref{brezis browder}, we get
\[
\int_\Omega A(x,\boldsymbol u_n)|D\boldsymbol u_n|^{p-2}D\boldsymbol u_n\cdot D\boldsymbol \theta_\delta+\frac1p\int_\Omega\left(\nabla_{\boldsymbol s}A(x,\boldsymbol u_n)\cdot\boldsymbol\theta_\delta\right)|D\boldsymbol u_n|^p=\int_\Omega\boldsymbol g(x,\boldsymbol u_n)\cdot\boldsymbol\theta_\delta+\langle\boldsymbol \omega_n,\boldsymbol\theta_\delta\rangle.
\]
By \eqref{a.3}, we also deduce that
\[
\frac1p\int_\Omega\left(\nabla_{\boldsymbol s}A(x,\boldsymbol u_n)\cdot\boldsymbol\theta_\delta\right)|D\boldsymbol u_n|^p\ge0.
\]

Now, recall that for every $A\in\mathbb{R}^{m\times N}$,
$\boldsymbol v\in\mathbb{R}^m$, and $\boldsymbol w\in\mathbb{R}^N$,
\[
A\cdot (\boldsymbol v\otimes\boldsymbol w)= \boldsymbol w\cdot (A^{\top}\boldsymbol v).
\]
If $R\le|\boldsymbol u_n|\le\frac{R+\delta R}{\delta}$, a direct calculation yields:
\begin{align*}
D\boldsymbol u_n\cdot D\boldsymbol\theta_\delta&=-\delta |D\boldsymbol u_n|^2+(R+\delta R)\left\{\frac{|D\boldsymbol u_n|^2}{|\boldsymbol u_n|}+D\boldsymbol u_n\cdot\left[\boldsymbol u_n\otimes\nabla\left(\frac{1}{|\boldsymbol u_n|}\right)\right]\right\}\\
&= -\delta |D\boldsymbol u_n|^2+(R+\delta R)\frac{|D\boldsymbol u_n|^2}{|\boldsymbol u_n|}-\frac{R+\delta R}{|\boldsymbol u_n|^3}D\boldsymbol u_n\cdot\left[\boldsymbol u_n\otimes (D\boldsymbol u_n)^{\top}\boldsymbol u_n\right] \\
&=-\delta |D\boldsymbol u_n|^2+(R+\delta R)\frac{|D\boldsymbol u_n|^2}{|\boldsymbol u_n|}-(R+\delta R)\left[\frac{|(D\boldsymbol u_n)^{\top}\boldsymbol u_n|^2}{|\boldsymbol u_n|^3}\right]\\
&\ge-\delta|D\boldsymbol u_n|^2.
\end{align*}

Therefore, we obtain

\begin{align*}
\int_\Omega A(x,\boldsymbol u_n)|D\boldsymbol u_n|^{p-2}D\boldsymbol u_n\cdot D\boldsymbol \theta_\delta\ge\int_{\{|\boldsymbol u_n|\le R\}}A(x,\boldsymbol u_n)|D\boldsymbol u_n|^p-\delta\int_{\{|\boldsymbol u_n|\ge R\}}A(x,\boldsymbol u_n)|D\boldsymbol u_n|^p.
\end{align*}
Thus,
\begin{equation}\label{stima theta_delta}
\begin{aligned}
\int_{\{|\boldsymbol u_n|\le R\}}A(x,\boldsymbol u_n)|D\boldsymbol u_n|^p&-\delta\int_{\{|\boldsymbol u_n|\ge R\}}A(x,\boldsymbol u_n)|D\boldsymbol u_n|^p\\
&\qquad\le \int_\Omega\boldsymbol g(x,\boldsymbol u_n)\cdot\boldsymbol\theta_\delta+\langle\boldsymbol \omega_n,\boldsymbol\theta_\delta\rangle.
\end{aligned}
\end{equation}
Now, from (weighted) Young's inequality and by the strong convergence of $\{\boldsymbol\omega_n\}$ in $W^{-1,p'}(\Omega)$, we have that there exists a constant $C_1=C_1(\delta)>0$ such that
\begin{align*}
\langle\boldsymbol \omega_n,\boldsymbol\theta_\delta\rangle&\le C_1+\delta\|\boldsymbol\theta_\delta\|_{W_0^{1,p}}^p=C_1+\delta\int_\Omega\left|D\boldsymbol\theta_\delta\right|^p.
\end{align*}
By \eqref{stima gradiente}, we have
\begin{equation}\label{stima omega_ntheta_delta}
\begin{aligned}
    \langle\boldsymbol \omega_n,\boldsymbol\theta_\delta\rangle&\le C_1+\delta\int_\Omega\left|D\boldsymbol\theta_\delta\right|^p\le C_1+\delta(2+3\delta) \int_\Omega |D\boldsymbol u_n|^p\\
    &\le C_1+5\delta\int_{\{|\boldsymbol u_n|\le R\}}|D\boldsymbol u_n|^p+5\delta\int_{\{|\boldsymbol u_n|\ge R\}}|D\boldsymbol u_n|^p.
\end{aligned}\end{equation}

Taking into account \eqref{a.2}, we obtain
\[
\langle\boldsymbol \omega_n,\boldsymbol\theta_\delta\rangle\le C_1+\frac{5\delta}{\nu_0}\int_{\{|\boldsymbol u_n|\le R\}}A(x,\boldsymbol u_n)|D\boldsymbol u_n|^p+\frac{5\delta}{\nu_0}\int_{\{|\boldsymbol u_n|\ge R\}}A(x,\boldsymbol u_n)|D\boldsymbol u_n|^p.
\]
Furthermore, \eqref{g.1} implies that there exists a constant $C_2>0$ such that
\begin{align*}
\int_\Omega\boldsymbol g(x,\boldsymbol u_n)\cdot\boldsymbol\theta_\delta\le\int_\Omega a(x)|\boldsymbol\theta_\delta|+\int_\Omega|\boldsymbol u_n|^{q-1}|\boldsymbol\theta_\delta|\le C_2.
\end{align*}
Hence, from \eqref{stima theta_delta}-\eqref{stima omega_ntheta_delta}, we infer that there exists $C_3>0$ such that
\begin{align*}
\left(1-\frac{5\delta}{\nu_0}\right)\int_{\{|\boldsymbol u_n|\le R\}}A(x,\boldsymbol u_n)|D\boldsymbol u_n|^p\le\left(\delta+\frac{5\delta}{\nu_0}\right)\int_{\{|\boldsymbol u_n|\ge R\}}A(x,\boldsymbol u_n)|D\boldsymbol u_n|^p+C_3.
\end{align*}
Choosing $\delta \in (0,1)$ sufficiently small such that
\[
0<\frac{\nu_0\delta+5\delta}{\nu_0-5\delta}\le\varepsilon,
\]
the conclusion follows.
\end{proof}

\begin{theorem}\label{CPS condition}
Assume that \eqref{a.1}-\eqref{a.4} and \eqref{g.1}-\eqref{g.2} hold.
Every $(CPS)$-sequence for $\mathcal J$ is bounded. In particular, if \eqref{psi ipotesi} is also satisfied, the functional $\mathcal J$ satisfies the $(CPS)$-condition.
\end{theorem}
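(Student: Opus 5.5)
The plan is the classical Ambrosetti--Rabinowitz argument. The nonsmoothness is handled by testing with $\boldsymbol u_n$ itself through Theorem \ref{brezis browder}. The low-level region $\{|\boldsymbol u_n|\le R\}$ is controlled by Proposition \ref{stima per limitatezza}.

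\emph{Step 1: the test with $\boldsymbol u_n$.} Let $\{\boldsymbol u_n\}$ be a $(CPS)_c$-sequence, with $\boldsymbol\omega_n\to0$ in $W^{-1,p'}(\Omega)$. By \eqref{a.3}, $\nabla_{\boldsymbol s}A(x,\boldsymbol u_n)\cdot\boldsymbol u_n\ge0$, so the negative part is trivially in $L^1$. Theorem \ref{brezis browder}, applied with right-hand side $\boldsymbol g(x,\boldsymbol u_n)+\boldsymbol\omega_n\in W^{-1,p'}$, gives
\[
\int_\Omega A(x,\boldsymbol u_n)|D\boldsymbol u_n|^p+\frac1p\int_\Omega(\nabla_{\boldsymbol s}A(x,\boldsymbol u_n)\cdot\boldsymbol u_n)|D\boldsymbol u_n|^p=\int_\Omega\boldsymbol g(x,\boldsymbol u_n)\cdot\boldsymbol u_n+\langle\boldsymbol\omega_n,\boldsymbol u_n\rangle.
\]

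\emph{Step 2: the combination $\mu\mathcal J(\boldsymbol u_n)-\langle\cdot,\boldsymbol u_n\rangle$.} I compute $\mu\mathcal J(\boldsymbol u_n)$ minus the identity of Step 1. This yields
\[
\int_\Omega\Big[\big(\tfrac{\mu}{p}-1\big)A(x,\boldsymbol u_n)-\tfrac1p\nabla_{\boldsymbol s}A(x,\boldsymbol u_n)\cdot\boldsymbol u_n\Big]|D\boldsymbol u_n|^p
=\mu c+o(1)+\int_\Omega\big(\mu G(x,\boldsymbol u_n)-\boldsymbol g(x,\boldsymbol u_n)\cdot\boldsymbol u_n\big)-\langle\boldsymbol\omega_n,\boldsymbol u_n\rangle.
\]

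\emph{Step 3: splitting the left-hand side.} On $\{|\boldsymbol u_n|\ge R\}$, \eqref{a.4} bounds the bracket below by $\frac{\mu-p-\gamma}{p}A(x,\boldsymbol u_n)$, and $\mu-p-\gamma>0$. On $\{|\boldsymbol u_n|\le R\}$, \eqref{a.1} bounds the bracket below by $-C(R)A(x,\boldsymbol u_n)$, since $A\ge\nu_0$ and $|\nabla_{\boldsymbol s}A\cdot\boldsymbol s|\le C_0R$.

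\emph{Step 4: the right-hand side.} By \eqref{g.2}, $\mu G-\boldsymbol g\cdot\boldsymbol s\le0$ when $|\boldsymbol s|\ge R$. On $\{|\boldsymbol s|\le R\}$, \eqref{g.1} gives $|\mu G-\boldsymbol g\cdot\boldsymbol s|\le C(a(x)+1)\in L^1$. Hence the $G$-term is bounded above by a constant, and $\langle\boldsymbol\omega_n,\boldsymbol u_n\rangle\le o(1)\|\boldsymbol u_n\|$.

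\emph{Step 5: closing the estimate.} Write $X_n:=\int_{\{|\boldsymbol u_n|\ge R\}}A|D\boldsymbol u_n|^p$ and $Y_n:=\int_{\{|\boldsymbol u_n|\le R\}}A|D\boldsymbol u_n|^p$. Steps 2--4 give
\[
\tfrac{\mu-p-\gamma}{p}X_n-C(R)Y_n\le C+o(1)\|\boldsymbol u_n\|.
\]
Proposition \ref{stima per limitatezza}, with $\varepsilon$ chosen so that $C(R)\varepsilon\le\frac{\mu-p-\gamma}{2p}$, gives $Y_n\le\varepsilon X_n+K$. Substituting yields $X_n\le C+o(1)\|\boldsymbol u_n\|$, and then $Y_n\le C+o(1)\|\boldsymbol u_n\|$ as well. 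Since $\nu_0\|\boldsymbol u_n\|^p\le X_n+Y_n\le C+o(1)\|\boldsymbol u_n\|$ and $p>1$, the sequence $\{\boldsymbol u_n\}$ is bounded.

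\emph{Step 6: the $(CPS)$-condition.} Once \eqref{psi ipotesi} is added, Corollary \ref{CPS condition corollary} turns boundedness into the $(CPS)$-condition.

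\emph{Expected obstacle.} The delicate point is Step 3 on the region $\{|\boldsymbol u_n|\le R\}$. There \eqref{a.4} is not assumed, so the coefficient of $A|D\boldsymbol u_n|^p$ may become negative. This is exactly why Proposition \ref{stima per limitatezza} is needed, with its arbitrarily small $\varepsilon$ that absorbs the bad term. A possible subtlety is that Proposition \ref{stima per limitatezza} is stated for the fixed $R$ coming from \eqref{g.2} and \eqref{a.4}; this is fine, since the proposition allows any $R>0$. A second point to check is the justification of the test in Step 1 when $\boldsymbol u_n\notin L^\infty$. The sign condition \eqref{a.3} makes Theorem \ref{brezis browder} applicable directly, so no truncation is needed.
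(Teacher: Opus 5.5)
Your proposal is correct and follows essentially the same route as the paper. The paper also tests with $\boldsymbol u_n$ via Theorem~\ref{brezis browder} using \eqref{a.3}, forms $\mathcal J(\boldsymbol u_n)-\frac1\mu\langle\boldsymbol\omega_n,\boldsymbol u_n\rangle$, and controls the $G$-terms by \eqref{g.1}--\eqref{g.2}. It then splits the $\nabla_{\boldsymbol s}A\cdot\boldsymbol u_n$ term over $\{|\boldsymbol u_n|\le R\}$ and $\{|\boldsymbol u_n|\ge R\}$ using \eqref{a.1}, \eqref{a.2} and \eqref{a.4}, and absorbs the low region with Proposition~\ref{stima per limitatezza}. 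Your pointwise lower bound of the bracket on each region is only a cosmetic rearrangement of that estimate.
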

\begin{proof}
Let $\{\boldsymbol u_n\}\subset W_0^{1,p}(\Omega)$ be a $(CPS)$-sequence at level $c\in\R$. We define
\[
\boldsymbol \omega_n= -\textbf{div}(A(x,\boldsymbol u_n)|D\boldsymbol u_n|^{p-2}D\boldsymbol u_n)+\frac1p\nabla_{\boldsymbol s}A(x,\boldsymbol u_n)|D\boldsymbol u_n|^p-\boldsymbol g(x,\boldsymbol u_n).
\]
In particular, $\boldsymbol u_n$ solves
\[
 -\textbf{div}(A(x,\boldsymbol u_n)|D\boldsymbol u_n|^{p-2}D\boldsymbol u_n)+\frac1p\nabla_{\boldsymbol s}A(x,\boldsymbol u_n)|D\boldsymbol u_n|^p=\boldsymbol\omega_n+\boldsymbol g(x,\boldsymbol u_n)\in W^{-1,p'}(\Omega).
\]
Theorem \ref{brezis browder} implies that
\[
\langle\boldsymbol \omega_n,\boldsymbol u_n\rangle=\int_\Omega A(x,\boldsymbol u_n)|D\boldsymbol u_n|^{p}+\frac1p\int_\Omega \left(\nabla_{\boldsymbol s}A(x,\boldsymbol u_n)\cdot\boldsymbol u_n\right)|D\boldsymbol u_n|^p-\int_\Omega \boldsymbol g(x,\boldsymbol u_n)\cdot\boldsymbol u_n.
\]
Moreover, since $\{\boldsymbol u_n\}$ is a $(CPS)$-sequence, there exists $C>0$ such that
\[
C+\|\boldsymbol u_n\|\ge\mathcal J(\boldsymbol u_n)-\frac1\mu\langle\boldsymbol \omega_n,\boldsymbol u_n\rangle.
\]
We compute
\begin{align*}
\mathcal J(\boldsymbol u_n)-\frac1\mu\langle\boldsymbol \omega_n,\boldsymbol u_n\rangle&=\left(\frac1p-\frac1\mu\right)\int_\Omega A(x,\boldsymbol u_n)|D\boldsymbol u_n|^p-\frac{1}{\mu p}\int_\Omega  \left(\nabla_{\boldsymbol s}A(x,\boldsymbol u_n)\cdot\boldsymbol u_n\right)|D\boldsymbol u_n|^p\\
&\quad+\frac1\mu\int_\Omega \boldsymbol g(x,\boldsymbol u_n)\cdot\boldsymbol u_n-\int_\Omega G(x,\boldsymbol u_n).
\end{align*}
According to \eqref{g.2}, we obtain
\[
\frac1\mu\int_\Omega \boldsymbol g(x,\boldsymbol u_n)\cdot\boldsymbol u_n-\int_\Omega G(x,\boldsymbol u_n)\ge -K_1,
\]
where $K_1>0$ is a positive constant.

Furthermore, from \eqref{a.1}-\eqref{a.2} and \eqref{a.4}, we have that
\begin{align*}
\int_\Omega   \left(\nabla_{\boldsymbol s}A(x,\boldsymbol u_n)\cdot\boldsymbol u_n\right)|D\boldsymbol u_n|^p&\le C_0R\int_{\{|\boldsymbol u_n|\le R\}}|D\boldsymbol u_n|^p+\gamma\int_{\{|\boldsymbol u_n|\ge R\} }A(x,\boldsymbol u_n)|D\boldsymbol u_n|^p\\
&\le\frac{C_0R}{\nu_0}\int_{\{|\boldsymbol u_n|\le R\}}A(x,\boldsymbol u_n)|D\boldsymbol u_n|^p +\gamma\int_{\{|\boldsymbol u_n|\ge R\}}A(x,\boldsymbol u_n)|D\boldsymbol u_n|^p.
\end{align*}
Now, according to Proposition \ref{stima per limitatezza}, we deduce that
\[
\int_\Omega   \left(\nabla_{\boldsymbol s}A(x,\boldsymbol u_n)\cdot\boldsymbol u_n\right)|D\boldsymbol u_n|^p\le\left(\frac{C_0R\varepsilon}{\nu_0}+\gamma\right)\int_\Omega A(x,\boldsymbol u_n)|D\boldsymbol u_n|^p+K_2,
\]
for every $\varepsilon>0$ and for some $K_2>0$.
 Thus,
\begin{align*}
C+\|\boldsymbol u_n\|\ge \left[\frac{\mu-p}{\mu p}-\frac{1}{\mu p}\left(\frac{C_0R\varepsilon}{\nu_0}+\gamma\right)\right]\int_\Omega A(x,\boldsymbol u_n)|D\boldsymbol u_n|^p-K_3.
\end{align*}
Since $\gamma<\mu-p$  and \eqref{a.2} holds, we can take 
\[
\varepsilon<(\mu-p-\gamma)\frac{\nu_0}{C_0R}
\]
such that there exists $\nu_1=\nu_1(\nu_0,\varepsilon)>0$ with

\[
C+\|\boldsymbol u_n\|\ge\nu_1\|\boldsymbol u_n\|^p-K_3.
\]
Then, $\{\boldsymbol u_n\}$ is bounded in $W_0^{1,p}(\Omega)$. Finally,  Corollary \ref{CPS condition corollary} concludes the proof.
\end{proof}
\begin{corollary}\label{PS condition}
Assume that \eqref{a.1}-\eqref{a.4}, \eqref{psi ipotesi}, and \eqref{g.1}-\eqref{g.2} hold.
The functional $\mathcal J$ satisfies the $(PS)$-condition.
\end{corollary}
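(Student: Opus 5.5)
This corollary follows by chaining the two results just established. Under \eqref{a.1}--\eqref{a.4}, \eqref{psi ipotesi} and \eqref{g.1}--\eqref{g.2}, Theorem~\ref{CPS condition} shows that every $(CPS)_c$-sequence is bounded. Corollary~\ref{CPS condition corollary} then turns this boundedness into the $(CPS)_c$-condition: bounded sequences have weakly convergent subsequences, the term $\boldsymbol g(x,\boldsymbol u_n)$ converges strongly in $W^{-1,p'}(\Omega)$ by the subcritical growth \eqref{g.1}, and Theorem~\ref{compactness} upgrades weak convergence to strong convergence. So $\mathcal J$ satisfies $(CPS)_c$ for every $c\in\R$.

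Next I apply Proposition~\ref{PS e CPS}$(iii)$ at each level $c$. Take a $(PS)_c$-sequence. By item $(ii)$ it is a $(CPS)_c$-sequence, because the inequality \eqref{dis weak slope} gives $\|\boldsymbol\omega_n\|_{W^{-1,p'}}\le|d\mathcal J|(\boldsymbol u_n)\to0$, using density of $C_c^\infty(\Omega)$ in $W_0^{1,p}(\Omega)$. Hence it has a strongly convergent subsequence. Since $c$ is arbitrary, the $(PS)$-condition holds.

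The only point needing care is the bookkeeping in Proposition~\ref{PS e CPS}$(ii)$: the supremum in \eqref{dis weak slope} is taken over smooth test functions only. It must be checked that this still controls the full dual norm of $\boldsymbol\omega_n$, which holds because $\boldsymbol\omega_n\in W^{-1,p'}(\Omega)$ and smooth functions are dense. Apart from this, there is no real obstacle, since all the analytic work is contained in Theorem~\ref{CPS condition} and Theorem~\ref{compactness}.
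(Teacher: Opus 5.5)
Your proposal is correct and follows exactly the paper's chain: boundedness of $(CPS)$-sequences from Theorem~\ref{CPS condition}, then the $(CPS)$-condition from Corollary~\ref{CPS condition corollary}, then the $(PS)$-condition via Proposition~\ref{PS e CPS}. One nuance concerns your justification of $(ii)$: a priori $\boldsymbol\omega_n$ lies only in $W^{-1,p'}(\Omega)+L^1(\Omega)$, because of the term $\frac1p\nabla_{\boldsymbol s}A(x,\boldsymbol u_n)|D\boldsymbol u_n|^p$, and it is the finite supremum in \eqref{dis weak slope}, together with density of $C_c^\infty(\Omega)$, that shows $\boldsymbol\omega_n$ extends to an element of $W^{-1,p'}(\Omega)$ with norm at most $|d\mathcal J|(\boldsymbol u_n)$, so this membership is a consequence of the bound rather than an input to it.
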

\begin{proof}
Theorem \ref{CPS condition} and Corollary \ref{CPS condition corollary} imply that $\mathcal J$ satisfies the $(CPS)$-condition. Thus, $\mathcal J$ satisfies the $(PS)$-condition by  Proposition \ref{PS e CPS}.
\end{proof}

\begin{proof}[Proof of Theorem \ref{main thm multiplicity}]
According to \eqref{a.1}-\eqref{a.2}, we have that
\[
\mathcal J(\boldsymbol u)\ge\frac{\nu_0}{p}\|\boldsymbol u\|^p-\int_\Omega G(x,\boldsymbol u),\quad \mathcal J(\boldsymbol u)\le\frac{C_0}{p}\|\boldsymbol u\|^p-\int_\Omega G(x,\boldsymbol u).
\]
Thus, the geometric assumption of Theorem \ref{MPequi} can be proved as in \cite{canino2025vectorial}. Moreover, the functional $\mathcal J$ satisfies the $(PS)$-condition by Corollary \ref{PS condition} and the existence of infinitely many weak solutions of \eqref{P} follows.

If \eqref{g.1} holds with {$r\ge\frac Np$}, Theorem \ref{regularity} implies also that any weak solution is bounded, and the proof is complete.
\end{proof}


\begin{thebibliography}{10}

\bibitem{arcoyaboccardo1}
David Arcoya and Lucio Boccardo.
\newblock Critical points for multiple integrals of the calculus of variations.
\newblock {\em Arch. Rational Mech. Anal.}, 134(3):249--274, 1996.


\bibitem{arioli2000existence}
Gianni Arioli and Filippo Gazzola.
\newblock Existence and multiplicity results for quasilinear elliptic
  differential systems: Quasilinear elliptic differential systems.
\newblock {\em Communications in Partial Differential Equations},
  25(1-2):125--153, 2000.




\bibitem{balci2022pointwise}
Anna~Kh. Balci, Andrea Cianchi, Lars Diening, and Vladimir Maz’ya.
\newblock A pointwise differential inequality and second-order regularity for
  nonlinear elliptic systems.
\newblock {\em Mathematische Annalen}, 383(3):1--50, 2022.


\bibitem{brezisbrowder2}
Ha\"im Br\'ezis and Felix~E. Browder.
\newblock Some properties of higher order Sobolev spaces.
\newblock {\em J. Math. Pures Appl.} (9) {\bf 61} (1982).



\bibitem{candela2009some}
Anna~Maria Candela and Giuliana Palmieri.
\newblock Some abstract critical point theorems and applications.
\newblock  {\em Discrete Continuous Dynamical Systems}, Suppl 2009, 133--142, 2009
  
    \bibitem{candela2009infinitely}
Anna~Maria Candela and Giuliana Palmieri.
\newblock Infinitely many solutions of some nonlinear variational equations.
\newblock {\em Calculus of Variations and Partial Differential Equations},
  34(4):495--530, 2009.


\bibitem{caninoquasilineare1}
Annamaria Canino.
\newblock Multiplicity of solutions for quasilinear elliptic equations.
\newblock {\em Topol. Methods Nonlinear Anal.}, 6(2):357--370, 1995.

\bibitem{caninoserdica}
Annamaria Canino.
\newblock On a variational approach to some quasilinear problems.
\newblock {\em Serdica Math. J.}, 22(3):297--324, 1996.


\bibitem{nonsmooththeory1}
Annamaria Canino and Marco Degiovanni.
\newblock Nonsmooth critical point theory and quasilinear elliptic equations.
\newblock In {\em Topological methods in differential equations and inclusions
  ({M}ontreal, {PQ}, 1994)}, volume 472 of {\em NATO Adv. Sci. Inst. Ser. C:
  Math. Phys. Sci.}, pages 1--50. Kluwer Acad. Publ., Dordrecht, 1995.
  
 \bibitem{CaninoMauroNeumann2026}
Annamaria Canino and Simone Mauro. Quasilinear Equations with Neumann Boundary Conditions. \emph{Mediterr. J. Math.} 23, 44 (2026). 



  \bibitem{canino2025vectorial}
Annamaria Canino and Simone Mauro.
\newblock Existence results for variational quasilinear elliptic systems involving the vectorial $p$-Laplacian.
\newblock {\em arXiv preprint arXiv:2510.15694}, 2025.


\bibitem{carmona2013regularity}
Jos{\'e} Carmona, Silvia Cingolani, Pedro~J. Mart{\'\i}nez-Aparicio, and
  Giuseppina Vannella.
\newblock Regularity and morse index of the solutions to critical quasilinear
  elliptic systems.
\newblock {\em Communications in Partial Differential Equations},
  38(10):1675--1711, 2013.


\bibitem{benedetto1989boundary}
Ya-Zhe Chen and Emmanuelle Di~Benedetto.
\newblock Boundary estimates for solutions of nonlinear degenerate parabolic
  systems.
\newblock {\em Journal für die reine und angewandte Mathematik}, 1989.




\bibitem{CianchiArma2014}
Andrea Cianchi and Vladimir~G. Maz'ya.
\newblock Global boundedness of the gradient for a class of nonlinear elliptic
  systems.
\newblock {\em Arch. Ration. Mech. Anal.}, 212(1):129--177, 2014.

\bibitem{cianchi2019optimal}
Andrea Cianchi and Vladimir~G. Maz'ya.
\newblock Optimal second-order regularity for the p-Laplace system.
\newblock {\em Journal de Math{\'e}matiques Pures et Appliqu{\'e}es},
  132:41--78, 2019.

\bibitem{corvellec1993deformation}
Jean-No\"el Corvellec, Marco Degiovanni, and Marco Marzocchi.
\newblock Deformation properties for continuous functionals and critical point
  theory.
\newblock {\em Topol. Methods Nonlinear Anal.}, 1(1):151--171, 1993.

\bibitem{GradientConvergence98}
Gianni Dal Maso, François Murat. Almost everywhere convergence of gradients of solutions to
nonlinear elliptic systems. \emph{Nonlinear Analysis} 31 (1998), 405–412.


\bibitem{degiovanni1994critical}
Marco Degiovanni and Marco Marzocchi.
\newblock A critical point theory for nonsmooth functional.
\newblock {\em Annali di Matematica Pura ed Applicata}, 167:73--100, 1994.


\bibitem{guedda1989quasilinear}
Mohammed Guedda and Laurent V{\'e}ron.
\newblock Quasilinear elliptic equations involving critical Sobolev exponents.
\newblock {\em Nonlinear Anal. Theory Methods Applic.}, 13(8):879--902, 1989.

\bibitem{MingioneSteinTheorem}
Tuomo Kuusi, and Giuseppe Mingione. 
\newblock A nonlinear Stein theorem.
\newblock{\em Calculus of Variations and Partial Differential Equations}, 51.1 (2014): 45-86.

\bibitem{ladyzhenskaya1968linear}
Olga A. Ladyzhenskaya and Nina N. Ural'tseva, 
\textit{Linear and Quasi-linear Elliptic Equations}, 
Academic Press, New York, 1968.


\bibitem{montoro2025regularity}
Luigi Montoro, Luigi Muglia, Berardino Sciunzi, and Domenico Vuono.
\newblock Regularity and symmetry results for the vectorial $p$-Laplacian.
\newblock {\em Nonlinear Analysis}, 251:113700, 2025.

   \bibitem{pellacci1997critical}
Benedetta Pellacci.
\newblock Critical points for non differentiable functionals.
\newblock {\em Bollettino dell'Unione Matematica Italiana B}, 7(11):733--749,
  1997.

  \bibitem{SchmidtMinimizer14}
Thomas Schmidt.
\newblock Partial regularity for degenerate variational problems and image
  restoration models in {BV}.
\newblock {\em Indiana Univ. Math. J.}, 63(1):213--279, 2014.

\bibitem{sciunzi2025global}
Berardino Sciunzi, Giuseppe Spadaro, and Domenico Vuono.
\newblock Global second order optimal regularity for the vectorial
  $p$-Laplacian.
\newblock {\em arXiv preprint arXiv:2502.17067}, 2025.

\bibitem{SquassinaDiagonalSystems99}
Marco Squassina.
\newblock Existence of multiple solutions for quasilinear diagonal elliptic systems.
\newblock Electronic Journal of Differential Equations, Vol. \textbf{1999} (1999). 


\bibitem{squassina2009existence}
Marco Squassina.
\newblock Existence, multiplicity, perturbation, and concentration results for
  a class of quasi-linear elliptic problems.
\newblock {\em Electronic Journal of Differential Equations}, pages 07--213,
  2009.


  \bibitem{struwe1983quasilinear}
Michael Struwe.
\newblock Quasilinear elliptic eigenvalue problems.
\newblock {\em Commentarii Mathematici Helvetici}, 58(1):509--527, 1983.



\end{thebibliography}
\end{document}